\documentclass[11pt]{amsart}
 \usepackage{amscd}
\usepackage{amsfonts}
\usepackage[dvipsnames]{xcolor}
\usepackage{amsmath,amssymb,amsthm} 
\usepackage{ulem}  
\usepackage[compatible]{algpseudocode}
\usepackage{algorithm}

\usepackage{url}
\usepackage{cite}

\usepackage{tikz}
\usepackage[all]{xy}
\usepackage{enumitem} 

\newtheorem{thm}{Theorem}[section]

\newtheorem{conj}[thm]{Conjecture}

\theoremstyle{definition}
\newtheorem{ex}[thm]{Example}
\newtheorem{lem}[thm]{Lemma}

\theoremstyle{definition}
\newtheorem{defn}[thm]{Definition}
\newtheorem{rem}[thm]{Remark}
\newtheorem{Af}[thm]{Affirmation}
\numberwithin{equation}{section}

\def\H{\mathbb{H}} 
\def\P{\mathbb{P}} 
\def\N{\mathbb{N}} 
\def\Z{\mathbb{Z}} 
\def\C{\mathbb{C}} 
\def\O{\mathcal{O}} 

\def\F{\mathcal{F}}

\definecolor{miazul}{rgb}{0.60,0.75,0.90}
\definecolor{verdeazul}{rgb}{0.30,0.80,0.30}

\makeatother

\begin{document}     

\title[Hilbert Scheme and Foliations]
{Initial ideals, Low-Degree Foliations and the Hilbert scheme of points on $\Bbb {CP}^2$: A First approach} 

\author[Pantale\'on-Mondrag\'on]{P.~Rub\'i Pantale\'on-Mondrag\'on}
\address{P.~Rub\'i Pantale\'on-Mondrag\'on\\
  Centro de Ciencias  Matem\'aticas\\
  Antigua Carretera a P\'atzcuaro \# 8701\\
  Col. Ex Hacienda San Jos\'e de la Huerta\\
  Morelia, Michoac\'an, M\'exico\\
  C.P. 58089\\
 }
\email{pantaleon.rubi@gmail.com}
\thanks{The author was  supported by a Posdoctoral Fellowship from SECIHTI during this project, M\'exico.}
\keywords{Foliation, Hilbert scheme, Initial ideal}

\maketitle

\begin{abstract}
The holomorphic foliations of degree $d$ on $\C\P^{2}$ with isolated singularities are determined by their singular subschemes, which correspond to elements in the Hilbert Scheme of $N=d^{2}+d+1$ points. In this paper, we analyze the affine varieties parametrized by a partition of $N=7$ that cover the Hilbert Scheme of seven points, with the property that the Hilbert function of the elements in these affine varieties corresponds to that of  holomorphic foliation of degree $d=2$. Moreover, we present some results for degree $3$. 
\end{abstract}


\section{Introduction}\label{Introduction}

A foliation $\F$ of degree $d$ on the complex projective plane $\C\P^{2}$  is defined by a  homogeneous $1$-form \[\F: Adx+Bdy+Cdz,\] where the components $A,B,$ and $C\in\C[x,y,z]$ are homogeneous polynomial of degree $d+1$ satisfying the Euler condition $xA+yB+zC=0$.

A singular point of $\F$ is a point $p\in\C\P^{2}$ such that $A(p)=B(p)=C(p)=0$. We denote by $Sing(\F)$ the set of singular points of the foliation $\F$. This set can be finite or infinite, and our main principal interest lies in the case when  $Sing(\F)$ is finite, in particular when $Sing(\F)=\{p\}$ for some $p\in\C\P^{2}$.

Gómez-Mont, Kempf, Campillo and Olivares \cite{GM-K, CO} proved that a foliation with isolated singularities is determined by its singular subscheme.  Hence, it is suffices to study the foliation around its singular points.  If we assume that $p=[1:0:0]$ is a singular point of $\F$, we can consider a local representation of $\F$ on an open  subset of $\C\P^{2}$ containing $p$. This local representation is given by a $1$-form of type \[f(y,z)dy+g(y,z)dz.\]
There are some local invariants of the singularities that can be considered in this study.
\begin{itemize}
    \item The {\bf Milnor number} of $\F$ in $p$ is defined as \[\mu_{p}(\F)=\dim_{\C}\frac{\O_{\C^{2},(0,0)}}{\langle f,g\rangle\cdot\O_{\C^{2},(0,0)}},\] 
    \end{itemize}
    where $\O_{\C^{2},(0,0)}$ denotes the local ring of regular functions at $(0,0)$.
    
    In this case, to compute the Milnor number it suffices to calculate the intersection index of $f$ and $g$ at $(0,0)$, which we denote by $I_{0}(f,g)$.
    \begin{itemize}
    \item If we consider the decomposition of $f$ and $g$ into homogeneous components, namely 
    \begin{align*}
        f=& f_{m} +f_{m+1}+\cdots,\\
        g=& g_{s}+g_{s+1}+\cdots
    \end{align*}
    then, the {\bf algebraic multiplicity} of $\F$ at $p$ is defined as \[m_{p}(\F)=\min\{m,s\}.\]
\end{itemize}

By Jouanolou (\cite{J06}), we know that \[\sum\limits_{p\in Sing(\F)}\mu_{p}(\F)=d^{2}+d+1.\] 

If we consider the ideal generated by $\langle f,g \rangle\subset \C[y,z]$, then this ideal belongs to $\H^{d^{2}+d+1}(\C^{2})$, the Hilbert Scheme of $d^{2}+d+1$ points  on $\C^{2}$.

The Hilbert Scheme of $N$ points; $\H^{N}(\C^{2})$,  admits an open cover $\{U_{\lambda}\}$ consisting of affine varieties indexed by the partitions $\lambda$ of the natural $N$ (see the section (\ref{S:Preliminaries})). Moreover,  in each ideal of every affine open set $U_{\lambda}$, a generating set  is known  (see the equation (\ref{Generadores}) in the text). 

The main idea of this article is to analyze these equations in order to obtain foliations with a unique singular point associated with a fixed partition- equivalently, foliations with a unique singular point and a fixed monomial ideal in a local representation. We begin by analyzing the lowest case, $d=2$, since the classification of such objects is known up to for a change of coordinates (see \cite{CDGM}). Moreover, one of the first questions to be addressed in order to begin the analysis concerns the number of partitions whose associated ideal may be related to a foliation. We will propose a conjecture about the number of partitions that need to be analyzed in the general degree case.

The organization of the  paper is a follows:  in the section \ref{S:Preliminaries} we review  standard material on the Hilbert function and partitions for a natural number $N$. In the section \ref{functionP} we analyze the Hilbert function associated with foliations. In the section \ref{EulerCondition} we derive the equations which are satisfied by the coefficients of three homogeneous polynomials of the same degree subject to the Euler condition. In the sections \ref{N7} and \ref{N13} we present our results for lower degree, namely degree $2$ and degree $3$. However, for degree $3$, the analysis is not complete, we only provide the idea in general. Finally, in the section \ref{Conclusion} we present the conclusions drawn from this process and analysis, and we present the conjecture about the number of partitions that we need to be analyzed for any degree.



\section{Preliminaries}\label{S:Preliminaries}

\subsection{Hilbert function}
Let $\C[y,z]$ be the polynomial ring, and let $s\in\N$ be a natural
number. We denote by $\C[y,z]_{\leq s}$ the vector space of polynomials of degree at most $s$ inside $\C[y,z]$. 

Let $I\subset \C[y,z]$ be an ideal. The Hilbert function of $I$ is
defined by
\begin{equation}\label{FuncionHilbertafin}
\begin{array}{rccl}
              HF_{I}:& \N &\rightarrow &\N \\
                     &s &\mapsto &\dim_{\C}\frac{\C[y,z]_{\leq s}}{I\leq
                                   s},
\end{array}
\end{equation}
where $I_{\leq s}:=I\cap \C[y,z]_{\leq s}$. For a large enough $s\gg 0$, the Hilbert function is a polynomial, which is called Hilbert polynomial, and we denote it by $HP_{I}$.

There are three fundamental invariants for an ideal $I\subset\C[y,z]$ that we
can obtain from the Hilbert polynomial. Let
$HP_{I}=\sum_{i=0}^{k}a_{i}s^{i}$ be the Hilbert polynomial for $I$ and suppose $a_{k}\neq 0$, then invariants are:
\begin{itemize}
\item The regularity of $I$. This is the minimal number $s_{0}$ such that
  $HP_{I}(s)=HF_{I}(s)$ for all $s\geq s_{0}$.
\item The degree of $I$, denoted by $deg(I)$, given by $\frac{1}{k!}\cdot a_{k}$.
\item The dimension of the variety defined by $I$, which is the degree of the polynomial $HP_{I}$.
\end{itemize}

Similarly, if $I\subset\C[x,y,z]$ is a homogeneous ideal, the Hilbert function of $I$ is given as in (\ref{FuncionHilbertafin}) but in this case we consider only homogeneous polynomials of degree $s$; \[HF_{I}(s)=\frac{\C[x,y,z]_{s}}{I_{s}}.\]

\vskip1mm
Given a natural number $n$, the Hilbert scheme of $n$ points of the smooth affine variety $\C^2$ is defined as \[\H^n(\C^2):=\{\text{ideal }I\subset\C[y,z]:HP_{I}=n\}.\]

\subsection{Partitions of a natural number}

Let $N\in \N$ be a fixed natural number.

\begin{defn} A {\bf partition} $\lambda$ of $N$, denoted by
  $\lambda\vdash N$, is a sequence
  $\lambda=(\lambda_{1},\ldots,\lambda_{s})$ of decreasing nonnegative
  integers $\lambda_{i}$ such that \[\sum_{i=1}^{s}\lambda_{i}=N.\] The
  number $s$ is called the length of the partition $\lambda$.
\end{defn}

\begin{ex} Some examples of partitions.
\begin{enumerate}
\item[1.] $N=1$; $\lambda=(1)$. 
\item[2.] $N=2$; $\lambda=(2),(1,1)$.
\item[3.] $N=4$; $\lambda= (4), (3,1),(2,2),(2,1,1),(1,1,1,1)$.
\end{enumerate}
\end{ex}

\begin{rem}\label{MyP}
  Every $0$-dimensional monomial ideal $M\in \C[y,z]$ can be
  associated with a partition of $N=deg(M)$. Since $M$ is
  $0$-dimensional, there exist integers
  $\alpha_{0},\alpha_{1},\ldots,\alpha_{s-1},\beta_{1},\ldots,\beta_{s}\in\N-\{0\}$ 
  with \[\alpha_{0}>\alpha_{1}>\ldots >\alpha_{s-1} \mbox{ and } \beta_{1}<\ldots<\beta_{s}\] such that
  \[M=\langle y^{\alpha_{0}}, y^{\alpha_{1}}z^{\beta_{1}},\ldots,
  y^{\alpha_{s-1}}z^{\beta_{s-1}},z^{\beta_{s}}\rangle.\] Then, the
  partition associated to the monomial $M$ is
  \[\lambda=(\underbrace{\alpha_{0},\ldots,\alpha_{0}}_{\beta_{1}},\underbrace{\alpha_{1},\ldots,\alpha_{1}}_{\beta_{2}-\beta_{1}},\ldots,
    \underbrace{\alpha_{i},\ldots,\alpha_{i}}_{\beta_{i+1}-\beta_{i}},\ldots,\underbrace{\alpha_{s-1},\ldots,\alpha_{s-1}}_{\beta_{s}-\beta_{s-1}})\]
  and it has length $\beta_{s}$. Moreover, by Macaulay's
  Theorem, we have $\lambda\vdash N=deg(M)$.
\end{rem}

Partitions can be represented using Ferrers diagrams or Young diagrams,
which we refer to as staircase diagrams. In this context, we adopt the ``French" style: there are $s$ rows of boxes, where the $i^{th}$ row contains $\lambda_{i}$ boxes.

\begin{ex}
Let $\lambda=(4,3,2,1,1)\vdash 11$ be a partition of $11$. The staircase diagram associated with $\lambda$ is the Ferrers diagram shown in Figure (\ref{Escalera-43211}).

\begin{figure}[h!]
  \centering
   \begin{tikzpicture}[scale=0.6]
    \shade[left color=pink, right color=pink](0,0) rectangle +(4,1);
    \shade[left color=pink, right color=pink](0,1) rectangle +(3,1);
    \shade[left color=pink, right color=pink](0,2) rectangle +(2,1);
    \shade[left color=pink, right color=pink](0,3) rectangle +(1,2);
    \draw (0,0) grid (4,1); \draw (0,1) grid (3,2); \draw (0,2) grid
    (2,3); \draw (0,3) grid (1,5);
        \end{tikzpicture}
    \caption{$(4,3,2,1,1)\vdash 11$}
    \label{Escalera-43211}
 \end{figure}
\end{ex}

By the correspondence between $\Z^{2}_{\geq 0}$ and the monic monomials
 in the polynomial ring $\C[y,z]$, each staircase diagram gives rise to two sets of monic monomials in $\C[y,z]$.

\begin{defn}
 Let $\lambda=(\lambda_{1},\lambda_{2},\ldots,\lambda_{s})\vdash N$
 be a partition of $N$. We define the following sets:
 \begin{itemize}
 \item
   $I_{\lambda}:=\langle~ y^{\lambda_{i}}z^{i-1}, z^{s}~|~
   i=1,\ldots,s~\rangle$.\\
 \item
   $B_{\lambda}:=\{~ y^{\delta}z^{\gamma}~|~ 0\leq \delta \leq
   \lambda_{i}-1, 0\leq \gamma\leq i-1, \mbox{ for } i=1\ldots, s~\}$.
\end{itemize}
\end{defn}

The set $I_{\lambda}$ is a $0$-dimensional monomial ideal in
$\C[y,z]$ and it is  generated by some elements of the border of the staircase diagram. In general, this ideal is called the border ideal \cite{hashemi2019computing}. 

The monomials in $B_{\lambda}$ are called standard monomials
of the partition $\lambda$, and we have $|B_{\lambda}|=N$.

\begin{ex} We consider the partition $(4,3,2^{2},1^{2})\vdash 13$. Then 
\begin{align*}
    I_{(4,3,2^{2},1^{2})}=&\langle~ z^{6},yz^{4},y^{2}z^{2},y^{3}z,y^{4}~\rangle, \mbox{ and }\\
    B_{(4,3,2^{2},1^{2})}=&\{~~1,y,y^{2},y^{3},z,yz,y^{2}z,z^{2},yz^{2},z^{3}yz^{3},z^{4},z^{5}~\}.\end{align*}
 See Figure (\ref{Escalera-432211}).

\begin{figure}[h!]
  \centering
  \begin{tikzpicture}[scale=0.7]
      \filldraw[draw=pink, fill=pink]plot[circle]
      coordinates{(0,0)(4,0)(4,1)(3,1)(3,2)(2,2)(2,4)(1,4)(1,6)(0,6)(0,0)};
      coordinates{(6,0)(4,0)(4,1)(3,1)(3,2)(2,2)(2,4)(1,4)(1,6)(0,6)(0,7)(6,7)(6,0)};
      \draw (0,0) grid (4,1);
     \draw (0,1) grid (3,2);
     \draw (0,2) grid (2,4);
     \draw (0,4) grid (1,6);
     \foreach \x in {0,1,...,5}
     \draw (\x cm, 1pt)--(\x cm,-1pt) node[anchor=north]{$\x$};
     \foreach \y in {1,2,...,7}
     \draw (1pt,\y cm)--(-1pt,\y cm) node[anchor=east]{$\y$};
      \draw (0,0) grid (6,7);
     \draw (0.5,0.5) node{\small$1$};
     \draw (1.5,0.5) node{\small$y$};
     \draw (2.5,0.5) node{\small$y^{2}$};
     \draw (3.5,0.5) node{\small$y^{3}$};
     \draw (4.5,0.5) node{\small$y^{4}$};
     \draw (5.5,0.5) node{\small$y^{5}$};
     \draw (0.5,1.5) node{\small$z$};
     \draw (1.5,1.5) node{\small$yz$};
     \draw (2.5,1.5) node{\small$y^{2}z$};
     \draw (3.5,1.5) node{\small$y^{3}z$};
     \draw (0.5,2.5) node{\small$z^{2}$};
     \draw (1.5,2.5) node{\small$yz^{2}$};
     \draw (2.5,2.5) node{\small$y^{2}z^{2}$};
     \draw (0.5,3.5) node{\small$z^{3}$};
     \draw (1.5,3.5) node{\small$yz^{3}$};
     \draw (2.5,3.5) node{\small$y^{2}z^{3}$};
     \draw (0.5,4.5) node{\small$z^{4}$};
     \draw (1.5,4.5) node{\small$yz^{4}$};
     \draw (0.5,5.5) node{\small$z^{5}$};
     \draw (1.5,5.5) node{\small$yz^{5}$};
     \draw (0.5,6.5) node{\small$z^{6}$};
    \end{tikzpicture}
    \caption{$(4,3,2^{2},1^{2})\vdash 13$}
    \label{Escalera-432211}
\end{figure}
\end{ex}

\begin{defn}
  Let $\lambda$ be a partition of $N$. We define 
  \[U_{\lambda}\subset \H^{N}(\C^{2})\] as the set of ideals $I$ such that the
  standard monomials  of $\lambda$, considered as elements in the quotient
  $\frac{\C[y,z]}{I}$, form a $\C$-basis for the vector space
  $\frac{\C[y,z]}{I}$.
\end{defn}

The sets $U_{\lambda}$ are affine varieties, and they form an open
cover of the Hilbert scheme $\H^{N}(\C^{2})$ ( see \cite{MS04} for the structure of variety). In fact, let $I\in\ \H^{N}(\C^{2})$ and let $\succ$ a monomial
order. Consider the initial ideal $in_{\succ}(I)$ of $I$ with
respect to this order, then $in_{\succ}(I)$ is a monomial
ideal belonging to $\H^{N}(\C^{2})$. By the Remark \ref{MyP},
there exists a partition $\lambda_{0}\vdash N$ such that
$in_{\succ}(I)\in U_{\lambda_{0}}$, and hence $I\in U_{\lambda_{0}}$.

\begin{rem}Note that an ideal $I$ can belong to several  open sets $U_{\lambda}$, and
  these $\lambda$ do not necessarily arise from a monomial order (see \cite{mondragon2022border} for examples).
\end{rem}

From \cite{MS04}, it follows that if $I(\lambda)$ is an ideal in the open variety $U_{\lambda}$, then $I(\lambda)$ is  generated by polynomials which are combinations of elements in the standard monomials $B_{\lambda}$ associated with $\lambda$ and an element in the border ideal $I_{\lambda}$: 
\begin{equation}\label{Generadores}
  C^{rs}_{rs}y^{r}z^{s}-\sum\limits_{hk\in \lambda} C_{hk}^{rs} y^{h}z^{k}, \mbox{ where } rs\notin \lambda,  \mbox{ and } C^{**}_{**}\in\C.
  \end{equation}
These generators will be used in the analysis of the ideals associated with partitions in Sections (\ref{N7}) and (\ref{N13}).


\section{The function $\phi$}\label{functionP}

By \cite{CO}, a foliation of degree $d$ on $\C\P^{2}$ has a specific Hilbert function. Let $d\geq 2$ be a fixed natural number. Denote by $N=d^{2}+d+1$. Consider the function $\phi_{d}:\N\rightarrow \N$ defined by

\[\phi_{d}(s)=\left\{ \begin{array}{lcc}
              {s+2 \choose 2}  &   if  & s\leq d, \\
             \\  {s+2 \choose 2}-(t+1)(t+3) &  if & d+1\leq s=t+d+1\leq 2d, \\
             \\  N &  if  &  s > 2d.
             \end{array}
   \right.
 \]

 Let $H_{\phi_{d}}$ be the set of the ideals in the Hilbert scheme of $N$
 points with $HF_{I}(s)=\phi_{d}(s)$ for all $s\in\N$. This set is non-empty; for instance, the ideal \[I=\langle~y^{d},yz^{d-1}, z^{d}~\rangle \subset\C[y,z]\] belongs to $H_{\phi_{d}}$, because by the Remark \ref{MyP} it corresponds to the partition
 \[\lambda=(\underbrace{d+1,\ldots,d+1}_{d},1)\vdash N.\]

\begin{Af}\label{regularidad}
 The regularity of $\phi_{d}$ is $s=2d-1$.
\end{Af}

\begin{proof}
If $s=2d-1$, then $t=d-2$ and thus $\phi_{d}(s)=N$.

For $s=2d-2$, we have $t=d-3$ (when $d\geq 3$), and then $\phi_{d}(2d-2)=d^{2}+d< N$.

If $d=2$, then $s=2=d$ and $\phi_{d}(s)=6<7=N$.
\end{proof}

\begin{Af}\label{propiedades}
Let $\lambda$ be a partition of $N$, and $I_{\lambda}$ its associated monomial ideal. If
$I_{\lambda}\in H_{\phi_{d}}$, then the following hold:
 \begin{enumerate}
 \item[1.] For every $s\in\N$, $\phi_{d}(s)$ is the number of standard
   monomials of degree $\leq s$ in $\lambda$. Consequently,
   $\phi_{d}(s)-\phi_{d}(s-1)$ is the number of standard monomials of degree $s$ in $\lambda$.
 \item[2.] \label{Mo2d-1}There is exactly one standard monomial of degree $2d-1$ in $I_{\lambda}$.
 \item[3.] \label{Mod+1} There are exactly three monomials of degree $d+1$ in $I_{\lambda}$.
 \item[4.] There are ${d\choose 2}$ standard monomials of degree between
   $d+1$ and $2d-1$.
\end{enumerate}
\end{Af}

\begin{proof}
(1) follows from the definition.

For (2), by the regularity of $\phi_{d}$ we have \[\phi_{d}(2d-1){-}\phi_{d}(2d{-}2)=N{-}d^{2}+d=1, ~ \forall d\geq 2.\]

For (3), by definition of $\phi_{d}$, 
  \[\phi_{d}(d+1){-}\phi_{d}(d)={d+3\choose 2}{-}3{-}{d+2\choose s}=d{-}1.\]
Since there are $d+2$ monomials of degree $d+1$ in $\C[y,z]$, exactly  \[(d+2){-}[\phi_{d}(d+1){-}\phi_{d}(d)]=3\] of them must belong to $I_{\lambda}$.

Finally, (4) follows because 
\[\phi_{d}(2d{-}1){-}\phi_{d}(d)=N-{d+2\choose 2}=\frac{d(d-1)}{2}={d\choose 2}.\]
\end{proof}

\begin{rem}\label{Mes} If $\alpha,\beta\neq 0$ and $y^{\alpha}z^{\beta}\in B_{\lambda}$, then $y^{\alpha-1}z^{\beta},y^{\alpha}z^{\beta-1}\in B_{\lambda}$. Moreover, if $\alpha\neq 0$ and   $y^{\alpha}\in B_{\lambda}$ then $y^{\alpha-1}\in B_{\lambda}$ (resp. $\beta\neq 0$, $z^{\beta}\in B_{\lambda}$, then $z^{\beta-1}\in B_{\lambda}$).
\end{rem}

We denote by $\lambda(d,N)$ the set of partitions of $N=d^{2}+d+1$ such that $I_{\lambda}\in H_{\phi_{d}}$. This set contains the dual partitions as well. 

\section{Euler's Condition}\label{EulerCondition}

Let $F_{1},F_{2},F_{3}\in\C[x,y,z]_{s}$ be homogeneous polynomials of degree $s$ and let $V(F_1,F_2,F_3):=\{p\in\Bbb{CP}^2: F_j(p)=0,~ j=1,2,3\}$. The Euler condition on the ordered set $\{F_{1},F_{2},F_{3}\}$ is the linear relation \[xF_{1}+yF_{2}+zF_{3}=0.\]

For a monomial $*$ in a polynomial $**$, we denote by $C(*,**)$ the coefficient of $*$ in $**$. 

\begin{lem}\label{CondicionCoeficientes} The ordered set $\{F_{1},F_{2},F_{3}\}\subset \C[x,y,z]_{s}$ satisfies  the Euler condition if and only if the following hold:

\begin{itemize}\label{CondicionesPolinimios}
    \item $C(x^{s},F_{1})=C(y^{s},F_{2})=C(z^{s},F_{3})=0$.
    \item $-C(x^{i}z^{s-i},F_{1})=C(x^{i+1}z^{s-1-i},F_{3})$ for all $i\in\{0,\ldots,s-1\}$.
    \item $-C(x^{i}y^{s-i},F_{1})=C(x^{i+1}y^{s-1-i},F_{2})$ for all $i\in \{0,\ldots,s-1\}$.
    \item $-C(y^{i}z^{s-i},F_{2})=C(y^{i+1}z^{s-1-i},F_{3})$ for all $i\in \{1,\ldots,s-1\}$.
    \item $C(x^{j}y^{s-i-p-1}z^{p+1},F_{1})+C(x^{i+1}y^{s-i-p-2}z^{p+1},F_{2})+C(x^{i+1}y^{s-i-p-1}z^{p},F_{3})=0$ for all $p\in\{1,\ldots,s-1\}$ and $i\in\{0,\ldots,s-p-1\}$.
\end{itemize}
\end{lem}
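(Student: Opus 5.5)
The plan is a coefficient comparison. The polynomial $xF_{1}+yF_{2}+zF_{3}$ is homogeneous of degree $s+1$, and it vanishes exactly when its coefficient at every monomial $x^{a}y^{b}z^{c}$ with $a+b+c=s+1$ is zero. For any such monomial, write
\[
C(x^{a}y^{b}z^{c},\,xF_{1}+yF_{2}+zF_{3})=C(x^{a-1}y^{b}z^{c},F_{1})+C(x^{a}y^{b-1}z^{c},F_{2})+C(x^{a}y^{b}z^{c-1},F_{3}),
\]
where a term is read as $0$ whenever an exponent becomes negative. So the Euler condition is equivalent to the vanishing of these sums for all $(a,b,c)$. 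Both implications then follow at once, provided the list of conditions in the Lemma is exactly this family of linear equations, reindexed. The whole proof therefore comes down to organizing the monomials of degree $s+1$ by how many exponents are zero.

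\emph{Pure powers.} For $x^{s+1}$, $y^{s+1}$ and $z^{s+1}$ only one term survives. This gives $C(x^{s},F_{1})=0$, $C(y^{s},F_{2})=0$ and $C(z^{s},F_{3})=0$, which is the first bullet.

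\emph{Exactly one exponent zero.} Here two terms survive, and this gives the three edge relations.
\begin{itemize}
\item Monomials $x^{i+1}z^{s-i}$ (no $y$) give $C(x^{i}z^{s-i},F_{1})+C(x^{i+1}z^{s-1-i},F_{3})=0$ for $0\le i\le s-1$.
\item Monomials $x^{i+1}y^{s-i}$ (no $z$) give the third bullet in the same way.
\item Monomials $y^{i+1}z^{s-i}$ (no $x$) give $C(y^{i}z^{s-i},F_{2})+C(y^{i+1}z^{s-1-i},F_{3})=0$.
\end{itemize}
For the $x$-free family I would check the stated index range $i\in\{1,\dots,s-1\}$ against the natural range $0\le i\le s-1$. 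At $i=0$ the relation reads $C(z^{s},F_{2})+C(yz^{s-1},F_{3})=0$, which is a genuine condition, so I would either confirm that it is covered elsewhere or note that the range should start at $i=0$.

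\emph{Interior monomials.} These are the monomials with all exponents positive, giving three-term relations. I would parametrize them as $x^{i+1}y^{s-i-p-1}z^{p+1}$. The requirement that the $y$-exponent be at least $1$ forces $i\le s-p-2$, and $p$ then runs through $0,\dots,s-2$. Under this parametrization the three contributing monomials are
\[
x^{i}y^{s-i-p-1}z^{p+1}\ \text{in } F_{1},\qquad x^{i+1}y^{s-i-p-2}z^{p+1}\ \text{in } F_{2},\qquad x^{i+1}y^{s-i-p-1}z^{p}\ \text{in } F_{3}.
\]
This matches the last bullet once the exponent written $j$ there is read as $i$.

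\emph{Main obstacle.} The only real difficulty is bookkeeping: checking that the index ranges in the statement cover each monomial of degree $s+1$ exactly once, and that boundary cases are not duplicated or lost. The edge cases $p=s-1$, or $i=s-p-1$, make some exponent zero and coincide with the two-term relations. I would count the monomials in each family: $3$ pure powers, $3s$ edge monomials and $\binom{s}{2}$ interior ones, totalling $\binom{s+3}{2}$. Comparing this count with the number of listed equations verifies that the correspondence is a bijection, adjusting the stated ranges (or reading terms with negative exponents as $0$) where necessary. With that matching in place, the equivalence is immediate in both directions.
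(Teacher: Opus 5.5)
Your coefficient comparison is the same as the paper's proof, which writes down, monomial by monomial, the identities obtained from the coefficients of $xF_{1}+yF_{2}+zF_{3}$. The problem is that the list in the statement, read literally, is not sufficient, and you have only half-diagnosed this.

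\textbf{Fourth bullet.} You are right that it must start at $i=0$. The relation $C(z^{s},F_{2})+C(yz^{s-1},F_{3})=0$ is not covered elsewhere, because every target monomial $x^{i+1}y^{s-i-p-1}z^{p+1}$ of the fifth bullet has positive $x$-exponent. Indeed, $F_{1}=F_{3}=0$, $F_{2}=z^{s}$ satisfies every listed condition while $yF_{2}\neq 0$.

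\textbf{Fifth bullet.} Your sentence ``this matches the last bullet'' is false. You derived $p\in\{0,\dots,s-2\}$, $i\le s-p-2$, whereas the statement has $p\in\{1,\dots,s-1\}$, $i\le s-p-1$. With $p\ge 1$ the target monomial always has $z$-exponent at least $2$. So the $s-1$ relations at the monomials $x^{a}y^{b}z$ ($a,b\ge 1$, $a+b=s$) never appear, while the $s-1$ entries with $i=s-p-1$ only repeat the second bullet. For $s\ge 2$, take $F_{1}=x^{a-1}y^{b}z$, $F_{2}=F_{3}=0$ (e.g.\ $F_{1}=yz$ for $s=2$). It satisfies all listed conditions although $xF_{1}\neq 0$.

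\textbf{The count.} This is also why your counting step would not certify a bijection. Each coefficient of each $F_{k}$ occurs in exactly one monomial relation, so relations attached to distinct monomials are linearly independent. Counted with multiplicity, the list has $2+3s+\binom{s}{2}$ equations against $\binom{s+3}{2}=3+3s+\binom{s}{2}$ monomials. That shortfall of one makes it look as if restoring $i=0$ suffices. Counted as distinct relations, the list falls short by $s$.

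\textbf{What your argument proves.} It proves the ``only if'' direction as stated, reading $j$ as $i$ and coefficients of monomials with a negative exponent as $0$. It also proves the equivalence once the ranges are corrected to $i\in\{0,\dots,s-1\}$ in the fourth bullet and $p\in\{0,\dots,s-2\}$, $i\in\{0,\dots,s-p-2\}$ in the fifth. You should state that corrected lemma explicitly rather than ``adjusting the stated ranges where necessary.'' For reference, the explicit $A,B,C$ written later in the paper (e.g.\ for $(3^{2},1)$) do satisfy the corrected relations, including those at $x^{2}yz$, $xy^{2}z$ and $yz^{3}$.
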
 

\begin{proof}
Is enough to see:
\begin{itemize}
    \item $-xC(x^{i}z^{s-i},F_{1})x^{i}z^{s-i}=zC(x^{i+1}z^{s-1-i},F_{3})x^{i+1}z^{s-1-i}$ for all $i=0,\ldots,s-1$.
    \item $-xC(x^{i}y^{s-i},F_{1})x^{i}y^{s-i}=yC(x^{i+1}y^{s-1-i},F_{2})x^{i+1}y^{s-1+i}$ for all $i=0,\ldots,s-1$.
    \item $-yC(y^{i}z^{s-i},F_{2})y^{i}z^{s-i}=zC(y^{i+1}z^{s-1-i},F_{3})y^{i+1}z^{s-1+i}$ for all $i=1,\ldots,s-1$.
    \item for all $p\in\{1,\ldots,s-1\}$ and $i\in\{0,\ldots,s-p-1\}$:
    \begin{align*}
         & C(x^{i}y^{s-i-p-1}z^{p+1},F_{1}) x^{i+1}y^{s-i-p-1}z^{p+1}+C(x^{i+1}y^{s-i-p-2}z^{p+1},F_{2}) x^{i+1}y^{s-i-p-1}z^{p+1}+\\
&C(x^{i+1}y^{s-i-p-1}z^{p},F_{3}) x^{i+1}y^{s-i-p-1}z^{p+1}=\\
&[C(x^{i}y^{s-i-p-1}z^{p+1},F_{1})+C(x^{i+1}y^{s-i-p-2}z^{p+1},F_{2})+C(x^{i+1}y^{s-i-p-1}z^{p},F_{3})] x^{i+1}y^{s-i-p-1}z^{p+1}\\
\end{align*}
\end{itemize}
\end{proof}

\begin{Af}\label{SingularidadesABC}
With the above notation: 
    \begin{enumerate}
        \item  If $[1:0:0]\in V(F_{1},F_{2},F_{3})$, then $C(x^{s},F_{i})=0$ for  all $i=1,2,3$.
        \item  If $[0:1:0]\in V(F_{1},F_{2},F_{3})$, then $C(y^{s},F_{i})=0$ for all  $i=1,2,3$.
        \item If $[0:0:1]\in  V(F_{1},F_{2},F_{3})$, then $C(z^{s},F_{i})=0$ for all $i=1,2,3$. 
    \end{enumerate}
\end{Af}


\section{The Function $\phi_{2}$ for partitions of $N=7$}\label{N7}
 With the notation as in the previous section we have the following result.

\begin{lem} 
 Assume that $y\succ z$, then $\mid\lambda(2,7)\mid =4$.
\end{lem}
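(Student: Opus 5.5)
The plan is to translate the condition $I_{\lambda}\in H_{\phi_{2}}$ into a statement about which monomials lie in $B_{\lambda}$, using part 1 of Affirmation \ref{propiedades}, and then enumerate the possibilities. First compute $\phi_{2}$ explicitly. For $d=2$, $N=7$, the definition gives $\phi_{2}(0)=1$, $\phi_{2}(1)=3$ and $\phi_{2}(2)=6$. For $s=3$ we have $t=0$, so $\phi_{2}(3)={5\choose 2}-1\cdot 3=7$. For $s\geq 4>2d$, $\phi_{2}(s)=7$.

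By Affirmation \ref{propiedades}(1), $\phi_{2}(s)-\phi_{2}(s-1)$ counts the standard monomials of degree exactly $s$. The successive differences $1,2,3,1,0,0,\dots$ therefore say three things:
\begin{itemize}
\item $B_{\lambda}$ contains all six monomials of degree $\leq 2$, namely $1,y,z,y^{2},yz,z^{2}$;
\item $B_{\lambda}$ contains exactly one monomial of degree $3$, in agreement with Affirmation \ref{propiedades}(2), which gives one standard monomial of degree $2d-1=3$;
\item $B_{\lambda}$ contains no monomial of degree $\geq 4$.
\end{itemize}
Conversely, any set $B$ of this shape is closed under division, in the sense of Remark \ref{Mes}. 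Indeed, every divisor of a degree-$3$ monomial has degree $\leq 2$ and so already lies in $B$. Hence $B$ is the set of standard monomials of a unique partition of $7$, and by construction its Hilbert function agrees with $\phi_{2}$ for every $s$.

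It follows that $\lambda(2,7)$ is in bijection with the choice of one monomial among the four of degree $3$: $y^{3}$, $y^{2}z$, $yz^{2}$, $z^{3}$. Reading off the rows as in Remark \ref{MyP}, with the exponent of $z$ indexing the rows, these choices give the partitions
\[(4,2,1),\quad (3,3,1),\quad (3,2,2),\quad (3,2,1,1)\vdash 7.\]
These are pairwise distinct, so $|\lambda(2,7)|=4$. As a check, the example $\langle y^{2},yz,z^{2}\rangle$-type ideal described after the definition of $\phi_{d}$ should appear among them.

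I expect the only delicate point to be the role of the hypothesis $y\succ z$. In the count above it serves only to fix the convention identifying rows with powers of $z$, so that a partition and its dual are listed once each according to that convention; I would state explicitly that it does not change the count. The one genuine verification is the converse direction, that every such $B$ really yields $HF=\phi_{2}$ in all degrees, including $s\geq 4$, and this is immediate since $|B|=7=N$.
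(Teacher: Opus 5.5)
Your proposal is correct and follows the paper's route: compute $\phi_{2}$, use Affirmation~\ref{propiedades} to reduce to choosing the single standard monomial of degree $3$, and read off the same four partitions $(4,2,1),(3^{2},1),(3,2^{2}),(3,2,1^{2})$. You also verify the converse explicitly (each choice gives a set closed under division with Hilbert function $\phi_{2}$), a step the paper leaves implicit.

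Only your side ``check'' is off: $\langle y^{2},yz,z^{2}\rangle$ has colength $3$, because the paper has a typo there. The intended example is $\langle y^{3},yz^{2},z^{3}\rangle$, which is your $(3,3,1)$.
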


\begin{proof}
    If $d=2$, then $N=7$ and the function  $\phi_{2}$ takes the values shown in Table \ref{phi2}.

\renewcommand{\arraystretch}{1.1}
\begin{table}[h!]
   \begin{center}
     \begin{tabular}{|cc|c|c|c|c|c|c|c|c|c|c|c|}\hline
       $d$&& $s$     & $0$ & $1$ & $2$ & $3$ & $4$ & $5$ & $\cdots$  \\\hline
         $2$   &&$\phi_{2}(s)$& $1$ & $3$ & $6$ & $7$& $7$& $7$ &  $\cdots$   \\\hline
     \end{tabular}
   \end{center}
 \caption{Function $\phi_{2}$.}
 \label{phi2}
\end{table}

There are $15$ partitions of the number $7$, but only four of them have initial ideals $I_{\lambda}$ that belong to $H_{\phi_{2}}$, since it is sufficient to select a monomial of degree $3$, according to Affirmation (\ref{propiedades}).  These partitions are $(4,2,1),(3,2^{2}),(3^{2},1)$ and $(3,2,1^{2})$, with their corresponding staircase diagrams shown in Figure (\ref{Particiones2}).

\begin{figure}[!h]
    \begin{tikzpicture}[scale=0.5]
   \filldraw[draw=black!10,fill=blue!15]plot[circle]
    coordinates{(0,0)(4,0)(4,1)(2,1)(2,2)(1,2)(1,3)(0,3)(0,0)};
    \filldraw[dashed][draw=yellow!15,fill=yellow!30]plot[circle]
   coordinates{(4,0)(4,1)(2,1)(2,2)(1,2)(1,3)(0,3)(0,5)(5,5)(5,0)};
   \draw[step=1,color=gray!40] (0,0) grid (5,5);
   \foreach \x in {0,1,...,4}
   \draw (\x cm, 1pt)--(\x cm,-1pt) node[anchor=north]{$\x$};
   \foreach \y in {1,2,...,5}
   \draw (1pt,\y cm)--(-1pt,\y cm) node[anchor=east]{$\y$};
     \fill[red!90](3,0) circle (3pt) node[]{};
     \end{tikzpicture}
  \begin{tikzpicture}[scale=0.5]
   \filldraw[draw=black!10,fill=blue!15]plot[circle]
    coordinates{(0,0)(3,0)(3,1)(2,1)(2,3)(0,3)(0,0)};
    \filldraw[dashed][draw=yellow!15,fill=yellow!30]plot[circle]
   coordinates{(3,0)(3,1)(2,1)(2,3)(0,3)(0,5)(5,5)(5,0)};
   \draw[step=1,color=gray!40] (0,0) grid (5,5);
   \foreach \x in {0,1,...,5}
   \draw (\x cm, 1pt)--(\x cm,-1pt) node[anchor=north]{$\x$};
   \foreach \y in {1,2,...,5}
   \draw (1pt,\y cm)--(-1pt,\y cm) node[anchor=east]{$\y$};
     \fill[red!90](1,2) circle (3pt) node[]{};
 \end{tikzpicture}
  \begin{tikzpicture}[scale=0.5]
   \filldraw[draw=black!10,fill=blue!15]plot[circle]
    coordinates{(0,0)(3,0)(3,2)(1,2)(1,3)(0,3)(0,0)};
    \filldraw[dashed][draw=yellow!15,fill=yellow!30]plot[circle]
   coordinates{(3,0)(3,2)(1,2)(1,3)(0,3)(0,5)(5,5)(5,0)};
   \draw[step=1,color=gray!40] (0,0) grid (5,5);
   \foreach \x in {0,1,...,5}
   \draw (\x cm, 1pt)--(\x cm,-1pt) node[anchor=north]{$\x$};
   \foreach \y in {1,2,...,5}
   \draw (1pt,\y cm)--(-1pt,\y cm) node[anchor=east]{$\y$};
    \fill[red!90](2,1) circle (3pt) node[]{};
    \end{tikzpicture}
  \begin{tikzpicture}[scale=0.5]
   \filldraw[draw=black!10,fill=blue!15]plot[circle]
    coordinates{(0,0)(3,0)(3,1)(2,1)(2,2)(1,2)(1,4)(0,4)(0,0)};
    \filldraw[dashed][draw=yellow!15,fill=yellow!30]plot[circle]
   coordinates{(3,0)(3,1)(2,1)(2,2)(1,2)(1,4)(0,4)(0,5)(5,5)(5,0)};
   \draw[step=1,color=gray!40] (0,0) grid (5,5);
    \foreach \x in {0,1,...,5}
   \draw (\x cm, 1pt)--(\x cm,-1pt) node[anchor=north]{$\x$};
   \foreach \y in {1,2,...,5}
   \draw (1pt,\y cm)--(-1pt,\y cm) node[anchor=east]{$\y$};
    \fill[red!90](0,3) circle (3pt) node[]{};
    \end{tikzpicture}.
 \caption{The staircase diagram for partitions $\lambda$ such that $I_{\lambda}\in H_{\phi_{2}}$.}\label{Particiones2}
 \end{figure}
 \end{proof}

Observe that the partitions $(4,2,1)$ and $(3^{2},1)$ are dual to $(3,2,1^{2})$ and $(3,2^{2})$, respectively. Thus, we restrict our attention to the following partitions: 
\begin{itemize}
    \item $\lambda(1)=(3^{2},1)\vdash 7$, 
    \item $\lambda(2)=(3,2,1^{2})\vdash 7$. \\
\end{itemize}

In the ring $\C[y,z]$, we have the  Table \ref{phi7}.
\vskip1mm
\renewcommand{\arraystretch}{1.5} 
\begin{table}[!h] 
\begin{center}
     \begin{tabular}{|c|c|c|}\hline
       $\lambda\vdash 7$& $B_{\lambda}$ & $I_{\lambda}$  \\\hline 
       $\lambda(1)=(3^{2},1)$ & $\{1,y,y^{2},z,yz,y^{2}z,z^{2}\} $& $\langle y^{3},yz^{2},z^{3}\rangle$\\ \hline
$\lambda(2)=(3,2,1^{2})$ & $\{1,y,y^{2},z,yz,z^{2},z^{3}\}$ & $\langle y^{3}, y^{2}z,yz^{2},z^{4}\rangle$ \\ \hline
\end{tabular}
   \end{center}
   \caption{Partitions of $7$ in $H_{\phi_{2}}$, standard monomials and their associated ideals.}
  \label{phi7}
  \end{table}
\vskip4mm
The idea behind the analysis of these partitions to obtain  foliations of degree $d$ (in this case $d=2$) is the following: 

\noindent (a). By the Affirmation (\ref{Mod+1}), in the ideal $I(\lambda(i))\subset \C[y,z]$ with $i=1,2$ there are three polynomials of degree $3$. We denoted by $A,B,C$ their homogenizations with respect to $x$, and we analyze the relations among their coefficients that satisfy Euler's condition.

\noindent (b). From the conditions in the Lemma (\ref{CondicionCoeficientes}), we see that  for the partition $(3^{2},1)$ there exists a choice of polynomials $A,B,C$ with leading terms $\{y^{3},z^{3},yz^{2}\}$ respectively. However, for the partition $(3,2,1^{2})$ there are two possible choice of such polynomials, with leading terms either $\{y^{3},yz^{2},y^{2}z\}$ or $\{yz^{2},y^{2}z,y^{3}\}$.

\subsection{Partition $\lambda(1)=(3^{2},1)\vdash 7$}
Without loss of generality, we consider the sets \[LT=\{y^{3},-z^{3},yz^{2}\} \mbox{ and } B_{\lambda(1)}=\{1,y,y^{2},z,yz,y^{2}z,z^{2}\}.\] By Lemma (\ref{CondicionCoeficientes}), together the generators (\ref{Generadores}); the polynomials $\{A,B,C\}$ with leading terms $LT$ respectively satisfy the Euler condition if

\begin{align*}
A= &y^{3}+C_{00}^{03}x^{2}y+C_{10}^{03}xy^{2}+C_{00}^{12}x^{2}z+(C_{01}^{03}+C_{10}^{12})xyz+(C_{11}^{03}+C_{20}^{12})y^{2}z+C_{01}^{12}xz^{2},\\
B= &-z^{3}-C^{03}_{00}x^{3}-C_{10}^{03}x^{2}y-xy^{2}-C_{01}^{03}x^{2}z-C_{11}^{03}xyz+C_{11}^{12}xz^{2},\\
C= &yz^{2}-C_{00}^{12}x^{3}-C_{10}^{12}x^{2}y-C_{20}^{12}xy^{2}-C_{01}^{12}x^{2}z-C_{11}^{12}xyz.
\end{align*}

Assume that $p=[1:0:0]$ is a singular point of \[\omega_{331}=Adx+Bdy+Cdz.\] Then, by Affirmation (\ref{SingularidadesABC}), we have $C_{00}^{03}=C_{00}^{12}=0$ and the polynomials reduce to 

\begin{equation}\label{E:L1}
\begin{split}
A =& y^{3}+C_{10}^{03}xy^{2}+(C_{01}^{03}+C_{10}^{12})xyz+(C_{11}^{03}+C_{20}^{12})y^{2}z+C_{01},\\
B=& -z^{3}-C_{10}^{03}x^{2}y-xy^{2}-C_{01}^{03}x^{2}z-C_{11}^{03}xyz+C_{11}^{12}xz^{2},\\
C=& yz^{2}-C_{10}^{12}x^{2}y-C_{20}^{12}xy^{2}-C_{01}^{12}x^{2}z-C_{11}^{12}xyz.
\end{split}
\end{equation}
Moreover, the points $[0:1:0]$, $[0:0:1]$ do not belong to $Sing(\omega_{331})$.

\subsection{Partition $\lambda(2)=(3,2,1^{2})\vdash 7$}
 We consider the set \[LT_{21}=\{y^{3},-yz^{2},y^{2}z\}.\] The set of polynomials $\{A,B,C\}$ with leading terms $LT_{21}$ (respectively) satisfies the Euler condition if 

\begin{align*}
A= &C_{00}^{12}x^{2}y+C_{10}^{12}xy^{2}+y^{3}+C_{00}^{21}x^{2}z+(C_{01}^{12}+C_{10}^{21})xyz+C_{01}^{21}xz^{2}+C_{02}^{21}z^{3},\\
B= &-C^{12}_{00}x^{3}-C_{10}^{12}x^{2}y-xy^{2}-C_{01}^{12}x^{2}z+C_{20}^{21}xyz+C_{11}^{21}xz^{2}-yz^{2},\\
C= &-C_{00}^{21}x^{3}-C_{10}^{21}x^{2}y-C_{20}^{21}xy^{2}-C_{01}^{21}x^{2}z-C_{11}^{21}xyz+y^{2}z-C_{02}^{21}xz^{2}.\\
\end{align*}

\noindent Assume that, $p=[1:0:0]\in V(A,B,C)$. Then, $C_{00}^{21}=C_{00}^{12}=0$. By the Affirmation (\ref{SingularidadesABC}), we have $[0:1:0]\notin V(A,B,C)$. However, if $C_{02}^{21}=0$, then $[0:0:1]\in V(A,B,C)$. Thus, we may assume that $C_{02}^{21}\neq 0$, and in this case the polynomials $A,B,C$ reduce to 
\begin{equation}\label{E:L2}
\begin{split}
A= & C_{10}^{12}xy^{2}+y^{3}+(C_{01}^{12}+C_{10}^{21})xyz+C_{01}^{21}xz^{2}+C_{02}^{21}z^{3},\\
B= &{-}C_{10}^{12}x^{2}y-xy^{2}-C_{01}^{12}x^{2}z+C_{20}^{21}xyz+C_{11}^{21}xz^{2}-yz^{2},\\
C= &{-}C_{10}^{21}x^{2}y-C_{20}^{21}xy^{2}-C_{01}^{21}x^{2}z-C_{11}^{21}xyz+y^{2}z-C_{02}^{21}xz^{2}.
\end{split}
\end{equation}

\subsection{Partition $\lambda(2)=(3,2,1^{2})\vdash 7$}
For the set \[LT_{22}=\{yz^{2},-y^{2}z,y^{3}\}.\] The polynomials $\{A,B,C\}$ with leading terms given by $LT_{22}$ (respectively), satisfy the Euler condition if 
\begin{align*}
A= &C_{00}^{21}x^{2}y+C_{10}^{21}xy^{2}+C_{00}^{30}x^{2}z+(C_{01}^{21}+C_{10}^{30})xyz+C_{01}^{30}xz^{2}+yz^{2}+C_{02}^{30}z^{3},\\
B= &-C^{21}_{00}x^{3}-C_{10}^{21}x^{2}y-C_{01}^{21}x^{2}z+C_{20}^{30}xyz-y^{2}z+(C_{11}^{30}-1)xz^{2},\\
C= &-C_{00}^{30}x^{3}-C_{10}^{30}x^{2}y-C_{20}^{30}xy^{2}+y^{3}-C_{01}^{30}x^{2}z-C_{11}^{30}xyz-C_{02}^{30}xz^{2}.\\
\end{align*}

Following the above notation, we denote by \[\omega_{3211}^{2}=Adx+Bdy+Cdz.\] If $p=[1:0:0]\in Sing(\omega_{3211}^{2})$, then $C_{00}^{21}=C_{00}^{30}=0$. By Affirmation (\ref{SingularidadesABC}), $[0:1:0]\notin Sing(\omega_{3211}^{2})$. Moreover, if $C_{02}^{30}=0$, then  $[0:0:1]\in Sing(\omega_{3211}^{2})$. Thus, we may assume $C_{02}^{30}\neq 0$, and in this case the polynomials reduce to

\begin{equation}\label{E:L3}
\begin{split}
A= &C_{10}^{21}xy^{2}+(C_{01}^{21}+C_{10}^{30})xyz+C_{01}^{30}xz^{2}+yz^{2}+C_{02}^{30}z^{3},\\
B= &-C_{10}^{21}x^{2}y-C_{01}^{21}x^{2}z+C_{20}^{30}xyz-y^{2}z+(C_{11}^{30}-1)xz^{2},\\
C= &-C_{10}^{30}x^{2}y-C_{20}^{30}xy^{2}+y^{3}-C_{01}^{30}x^{2}z-C_{11}^{30}xyz-C_{02}^{30}xz^{2}.\\
\end{split}
\end{equation}

\subsection{Local representation} \label{RepresentacionLocal}
For each of the previous cases (systems (\ref{E:L1}),(\ref{E:L2}),(\ref{E:L3})), since a singular point of the $1$-form $\omega_{\lambda}$ is $[1:0:0]$, we work in the local coordinate  $x=1$. In this chart, the $1$-form  $\omega_{\lambda}=Adx+Bdy+Cdz$ is represented by the $1$-form \[ B(1,y,z)dy+C(1,y,z)dz.\]
Hence, for each case we obtain the following.
\begin{enumerate}
    \item for $\lambda(1)=(3^{2},1)$ and  $LT=\{y^{3},-z^{3},yz^{2}\}$.
\begin{align*}
B(1,y,z)= &-C_{10}^{03}y-y^{2}-C_{01}^{03}z-C_{11}^{03}yz+C_{11}^{12}z^{2}-z^{3},\\
C(1,y,z)= &-C_{10}^{12}y-C_{20}^{12}y^{2}-C_{01}^{12}z-C_{11}^{12}yz+yz^{2}.\\
\end{align*}
\item For $\lambda(2)=(3,2,1^{2})$, $LT_{21}=\{y^{3},-yz^{2},y^{2}z\}$ and $C_{02}^{21}\neq 0$.
\begin{align*}
B(1,y,z)= &-C_{10}^{12}y-y^{2}-C_{01}^{12}z+C_{20}^{21}yz+C_{11}^{21}z^{2}-yz^{2},\\
C(1,y,z)= &-C_{10}^{21}y-C_{20}^{21}y^{2}-C_{01}^{21}z-C_{11}^{21}yz+y^{2}z-C_{02}^{21}z^{2}.\\
\end{align*}
\item For $\lambda(2)=(3,2,1^{2})$, $LT_{22}=\{yz^{2},-y^{2}z,y^{3}\}$, and $C_{02}^{30}\neq 0$.
\begin{align*}
B(1,y,z)= &-C_{10}^{21}y-C_{01}^{21}z+C_{20}^{30}yz-y^{2}z+(C_{11}^{30}-1)z^{2},\\
C(1,y,z)= &-C_{10}^{30}y-C_{20}^{30}y^{2}+y^{3}-C_{01}^{30}z-C_{11}^{30}yz-C_{02}^{30}z^{2}.\\
\end{align*}

\end{enumerate}

Denote by $f:=B(1,y,z)$ and $g:=C(1,y,z)$. We can decompose $f=f_{1}+f_{2}+f_{3}$ and $g=g_{1}+g_{2}+g_{3}$ where $f_{i},g_{i}$ are homogeneous polynomials of degree $i$ in $\C[y,z]$. For each case, we observe that $f_{3},g_{3}\ne 0$, that $(f_{2},g_{2})\neq (0,0)$, and that the linear part may vanish. This behavior depends on the algebraic multiplicity of the singular point. We denote by $0$ the point $(0,0)$.

\begin{rem} We have $1\leq I_{0}(f,g)=ij\leq 4$ whenever the resultant $Res(f_{i},g_{j})\neq 0$ for $i,j\leq 2$.
\end{rem}

The following results focus on the case of multiplicity $2$.

\begin{thm}\label{M2-L1}
Consider the partition $\lambda(1)=(3^{2},1)$. For the set $LT=\{y^{3},-z^{3},yz^{2}\}$ and algebraic multiplicity $m_{[1:0:0]}(\omega_{\lambda(1)})=2$, the intersection index of  $f$ and $g$ in $0$ satisfies \[4\leq I_{0}(f,g)\leq 7.\] Moreover, we have $I_{0}(f,g)=7$ in the following cases:
\begin{enumerate}
    \item $f_{1}=g_{1}=0$, $C_{11}^{12}=0$, and $C_{20}^{12}=-C_{11}^{03}\neq 0$, or
    \item $f_{1}=g_{1}=0$, and $C_{20}^{12}=C_{11}^{12}=0$.
\end{enumerate}
\end{thm}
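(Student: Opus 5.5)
Since the singular point has algebraic multiplicity $2$ (Subsection \ref{RepresentacionLocal}), the linear parts $f_{1},g_{1}$ vanish. I will first make this explicit and then reduce everything to a few explicit local intersection computations.

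Concretely, the vanishing of $f_{1},g_{1}$ forces $C_{10}^{03}=C_{01}^{03}=C_{10}^{12}=C_{01}^{12}=0$. Writing $a=C_{11}^{03}$, $b=C_{11}^{12}$ and $c=C_{20}^{12}$, the local form in case (1) of Subsection \ref{RepresentacionLocal} becomes
\[
f=-y^{2}-a\,yz+b\,z^{2}-z^{3},\qquad g=-c\,y^{2}-b\,yz+yz^{2}=y\,(z^{2}-bz-cy).
\]
The factorization of $g$ is the key observation. It gives additivity of the intersection index:
\[
I_{0}(f,g)=I_{0}(f,y)+I_{0}(f,h),\qquad h:=z^{2}-bz-cy.
\]

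\textbf{Lower bound.} Since $f_{2}=-y^{2}-ayz+bz^{2}\neq 0$, the function $f$ has order exactly $2$. Moreover, $g$ has order $\geq 2$. Hence $I_{0}(f,g)\geq 2\cdot 2=4$.

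\textbf{Upper bound.} I will invoke Jouanolou's formula $\sum_{p}\mu_{p}=d^{2}+d+1=7$. Since $I_{0}(f,g)=\mu_{[1:0:0]}(\omega_{\lambda(1)})$ and all other Milnor numbers are nonnegative, we get $I_{0}(f,g)\leq 7$. Before using this I would check that the singular set is finite, since the formula needs isolated singularities. If it were not, $B$ and $C$ would share a common factor; I expect to rule this out using the leading terms $-z^{3}$ and $yz^{2}$ together with the fact that $[0:1:0]$ and $[0:0:1]$ are not singular.

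\textbf{Computing each summand.}
\begin{itemize}
\item First summand: $I_{0}(f,y)=\operatorname{ord}_{z}f(0,z)=\operatorname{ord}_{z}(bz^{2}-z^{3})$. This equals $2$ if $b\neq 0$ and $3$ if $b=0$.
\item Second summand, case $c\neq 0$: the curve $h=0$ is smooth and parametrized by $y=(z^{2}-bz)/c$. I substitute this into $f$ and read off the order in $z$. For $b=0$ one gets
\[
f=-(1+a/c)\,z^{3}-z^{4}/c^{2}.
\]
So $I_{0}(f,h)=3$ if $a\neq -c$ and $I_{0}(f,h)=4$ if $a=-c$. Together with $I_{0}(f,y)=3$ this gives totals $6$ and $7$. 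The value $7$ is exactly condition (1): $b=0$ and $c=-a\neq 0$.
\item Second summand, case $c=0$: here $h=z(z-b)$. If $b=0$ then $h=z^{2}$, so $I_{0}(f,h)=2\,I_{0}(f,z)=2\operatorname{ord}_{y}(-y^{2})=4$. The total is $3+4=7$, which is condition (2). If $b\neq 0$, only the branch $z=0$ passes through the origin, and the total is $2+2=4$.
\end{itemize}
The remaining case $b\neq 0$, $c\neq 0$ gives $I_{0}(f,y)=2$, and the same substitution yields an order bounded as above. All values therefore lie in $[4,7]$.

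\textbf{Main obstacle.} I expect the delicate point to be the upper bound, namely justifying that the singularities are isolated so that Jouanolou's formula applies. If that argument turns out to be awkward, my fallback is to bound the order of $f(\,(z^{2}-bz)/c,\,z\,)$ directly in the case $b,c\neq 0$. The explicit computations above already give the bound $7$ in every other case.
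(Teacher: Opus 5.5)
Your argument is correct, and its skeleton is the paper's. The multiplicity-$2$ hypothesis kills $f_{1},g_{1}$, the factorization $g=y(z^{2}-bz-cy)$ gives $I_{0}(f,g)=I_{0}(f,y)+I_{0}(f,h)$, and you split cases on $b=C_{11}^{12}$ and $c=C_{20}^{12}$ as the paper does. The paper reduces $f$ modulo $h$ and factors the remainder; you instead parametrize the smooth branch $y=(z^{2}-bz)/c$, which is equivalent and a little cleaner. Your values for $b=0$ and for $c=0$ agree with the paper's and establish both equality cases (1) and (2).

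The real difference is the upper bound. The paper gets $I_{0}(f,g)\le 7$ by exhausting every case, in particular $b\neq 0\neq c$. There it shows $I_{0}(f,h)\in\{2,3\}$ according to whether $l=cy+bz$ divides $f_{2}$. You replace this with Jouanolou's formula. That is shorter and legitimate, but it gives less information: it does not show that $7$ occurs \emph{only} in cases (1) and (2), although the statement does not ask for this.

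Jouanolou's formula also needs $\mathrm{Sing}$ to be finite, which you left as an expectation. This is easy to supply. Every monomial of $B$ except $-z^{3}$ contains $x$, so $B(0,y,z)=-z^{3}$. Hence the only candidate singular point on the line $x=0$ is $[0:1:0]$, where $A=1$. Since any curve meets that line, $A,B,C$ have no common factor and the formula applies.

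Your sentence on the case $b\neq 0\neq c$ (``bounded as above'') is not an argument. It is unnecessary once Jouanolou is in place. If you prefer the elementary fallback, the substitution gives $z^{2}$-coefficient $\frac{b}{c^{2}}(c^{2}+ac-b)$. When this vanishes, the $z^{3}$-coefficient is $\frac{2b-ac-c^{2}}{c^{2}}=\frac{b}{c^{2}}\neq 0$. So $I_{0}(f,g)\in\{4,5\}$ in that case, which is exactly the paper's dichotomy.
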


\begin{proof}
    Since $m_{[1:0:0]}(\omega_{\lambda(1)})=2$, then we have the following components
    \begin{align*}
f= &-y^{2}-C_{11}^{03}yz+C_{11}^{12}z^{2}-z^{3},\\
g= &-y(C_{20}^{12}y+C_{11}^{12}z-z^{2}).
\end{align*}

Then, the intersection index $I_{0}(f,g)=I_{0}(f,y)+I_{0}(f,C_{20}^{12}y+C_{11}^{12}z-z^{2})$. We can see that

\[I_{0}(y,f)=\left\{ \begin{array}{lcc}
              2 &   if  & C_{11}^{12}\ne 0 ,\\
             3  & if & C_{11}^{12}=0.\\
             \end{array}
   \right.
 \]

Let $l=C_{20}^{12}y+C_{11}^{12}z$. If $C_{11}^{12}\neq 0$, then $l\neq 0$. Since $f_{2}\neq 0$, we have the following 

\[I_{0}(f,l-z^{2})=\left\{ \begin{array}{lcc}
              2 &   if  & l\nmid f_{2} \mbox{ (this case include when } C_{20}^{12}=0), \\
             3  & if &  l\mid f_{2}.\\
             \end{array}
   \right.
 \]

 If $l\mid f_{2}$, then $f=lh-z^{3}$ where $h=(-\frac{1}{C_{20}^{12}}y+z)$ with the property $-C_{11}^{02}+(C_{20}^{12})^{2}=-C_{11}^{03}C_{20}^{12}$. Then $I_{0}(f,l-z^{2})=I_{0}(z^{2}(h-z),l-z^{2})=2I_{0}(z,l-z^{2})+I_{0}(y,l-z^{2})=3$. Thus, $4\leq I_{0}(f,g)\leq 6$.

 If $C_{11}^{12}=0$, then 

\[I_{0}(f,C_{20}^{12}y-z^{2})=\left\{ \begin{array}{lcc}
              4 &   if  & C_{20}^{12}=0, \\
              4   & if  &  C_{20}^{12}\neq 0 , C_{20}^{12}+C_{11}^{03}= 0,\\
              3  &   &  \mbox{ on the other case. } \\
             \end{array}
   \right.
 \]

In the first case; if $C_{20}^{12}=0$, then \[I_{0}(f,C_{20}^{12}y-z^{2})=I_{0}(y^{2},z^{2})=4.\]

On the other hand, if  $C_{20}^{12}\neq 0$, we denoted by $m=y+C_{11}^{03}z$, then $f=-y*m-z^{3}$, thus 
 \begin{equation*}
 \begin{split}
 I_{0}(f,C_{20}^{12}y-z^{2})=& I_{0}(mz^{2}+C_{20}^{12}z^{3}, y-\frac{1}{C_{20}^{12}}z^{2})\\
 &=2I_{0}(z,C_{20}^{12}y-z^{2})+I_{0}(y+(C_{11}^{03}+C_{20}^{12})z, C_{20}^{12}y-z^{2})\\
 &=2+ I_{0}(y+(C_{11}^{03}+C_{20}^{12})z, C_{20}^{12}y-z^{2}).
 \end{split}
 \end{equation*}

But,  
\[I_{0}(y+(C_{11}^{03}+C_{20}^{12})z, C_{20}^{12}y-z^{2})=\left\{ \begin{array}{lcc}
              1 &   if  & C_{11}^{03}+C_{20}^{12}\neq 0,\\
              2   & if  &  C_{11}^{03}+C_{20}^{12}=0.\\
             \end{array}
   \right.
 \]

Therefore,  $6\leq I_{0}(f,g)\leq 7$.
\end{proof}

\begin{thm}\label{M2-L2}
    For the partition $\lambda(2)=(3,2,1^{2})$, with the set $LT_{22}=\{yz^{2},-y^{2}z,y^{3}\}$ and algebraic multiplicity $m_{[1:0:0]}(w_{3211}^{2})=2$, we have  \[4\leq I_{0}(f,g)\leq 7.\] The  maximum intersection index is attained if and only if $C_{20}^{30}=0$ and $C_{11}^{30}=1$.
\end{thm}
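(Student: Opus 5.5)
After imposing $m_{[1:0:0]}(\omega^{2}_{3211})=2$, I will split $f$ as $z$ times a quadric and compute $I_{0}(f,g)$ factor by factor, with a case analysis on $C_{20}^{30}$ and $C_{11}^{30}$, exactly as in the proof of Theorem \ref{M2-L1}.

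\emph{Normalising $f$ and $g$.} Multiplicity $2$ forces $f_{1}=g_{1}=0$ in the local representation of Subsection \ref{RepresentacionLocal}, case (3), so $C_{10}^{21}=C_{01}^{21}=C_{10}^{30}=C_{01}^{30}=0$. Writing $a=C_{20}^{30}$, $b=C_{11}^{30}$ and $c=C_{02}^{30}\neq 0$, we get
\[
f=z\,h,\quad h=ay+(b-1)z-y^{2},\qquad g=y^{3}-ay^{2}-byz-cz^{2}.
\]
The lower bound $I_{0}(f,g)\geq m_{0}(f)\,m_{0}(g)=4$ is then immediate, because $f$ and $g$ both have order $2$ at $0$. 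For the upper bound I use additivity:
\[
I_{0}(f,g)=I_{0}(z,g)+I_{0}(h,g).
\]

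\emph{The term $I_{0}(z,g)$.} Restricting to $z=0$ gives $g(y,0)=y^{2}(y-a)$. Hence $I_{0}(z,g)=2$ if $a\neq 0$, and $I_{0}(z,g)=3$ if $a=0$.

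\emph{The term $I_{0}(h,g)$.} There are three cases.
\begin{enumerate}
\item If $a=0$ and $b=1$, then $h=-y^{2}$. So $I_{0}(h,g)=2I_{0}(y,g)=2I_{0}(y,-cz^{2})=4$, and the total is $3+4=7$.
\item If $a\neq 0$ and $b=1$, then $h=y(a-y)$ with $a-y$ a unit at $0$. So $I_{0}(h,g)=I_{0}(y,g)=2$, and the total is $4$.
\item If $b\neq 1$, the curve $h=0$ is smooth at $0$, parametrised by $y=t$, $z=(t^{2}-at)/(b-1)$. Then $I_{0}(h,g)$ is the order in $t$ of $g$ restricted to this branch.
\begin{itemize}
\item For $a=0$: $z=t^{2}/(b-1)$, and the $t^{3}$ coefficient is $1-b/(b-1)=-1/(b-1)\neq 0$. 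So the order is $3$ and the total is $6$.
\item For $a\neq 0$: $z$ has order $1$, so $g$ has order at least $2$. The intersection index must then be controlled using the $t^{2}$ and $t^{3}$ coefficients.
\end{itemize}
\end{enumerate}

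The main obstacle is the last subcase: I must show the order of $g$ along the branch is at most $4$, so that the total is at most $2+4=6<7$. I will expand $g(t,(t^{2}-at)/(b-1))$ up to $t^{4}$. If the $t^{2}$ coefficient $-a+\frac{ab}{b-1}-\frac{ca^{2}}{(b-1)^{2}}$ and the $t^{3}$ coefficient both vanish, I will check that the $t^{4}$ coefficient, which involves $-c/(b-1)^{2}\neq 0$ plus correction terms, cannot vanish as well. If that direct computation is messy, I will instead count via a resultant in $z$, using that $h$ is quadratic in $y$.

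With that bound, every case other than $a=0,b=1$ gives $I_{0}(f,g)\leq 6$. This yields $4\leq I_{0}(f,g)\leq 7$, with equality exactly when $C_{20}^{30}=0$ and $C_{11}^{30}=1$.
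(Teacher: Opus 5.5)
Your proposal is correct and follows essentially the paper's route: the same factorization $f=z\,(C_{20}^{30}y+(C_{11}^{30}-1)z-y^{2})$, additivity of $I_{0}$, and the same case split on $C_{20}^{30}$ and $C_{11}^{30}$; the only technical difference is that, when $C_{11}^{30}\neq 1$, you parametrize the smooth branch instead of using the paper's resultant and elimination steps. The step you flag as the main obstacle is immediate: $g\bigl(t,(t^{2}-at)/(b-1)\bigr)$ is a polynomial of degree exactly $4$ with $t^{4}$ coefficient $-c/(b-1)^{2}\neq 0$ and no correction terms (with $k=1/(b-1)$, the $t^{2}$ and $t^{3}$ coefficients are $ak(1-ack)$ and $k(2ack-1)$, which cannot vanish together, so you recover the paper's values $4$ or $5$), and your one slip, that $f=-y^{2}z$ has order $3$ rather than $2$ when $a=0$, $b=1$, leaves the lower bound $I_{0}(f,g)\geq 4$ intact.
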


\begin{proof} With the set  $LT_{22}=\{yz^{2},-y^{2}z,y^{3}\}$,  and $C_{02}^{30}\neq 0$, we have 
\begin{align*}
f= &z(C_{20}^{30}y-y^{2}+(C_{11}^{30}-1)z), \\
g= &-C_{20}^{30}y^{2}+y^{3}-C_{11}^{30}yz-C_{02}^{30}z^{2}.
\end{align*}

Then \[I_{0}(f,g)=I_{0}(z,g)+ I_{0}(h_{1}-y^{2},g),\] where $h_{1}=C_{20}^{30}y+(C_{11}^{30}-1)z$.

On the one hand,  
\[I_{0}(z,g)=\left\{ \begin{array}{lcc}
              2 &   if  & C_{20}^{30}\ne 0, \\
             3  & if & C_{20}^{30}=0.\
             \end{array}
   \right.
 \]

To calculate $I_{0}(h_{1}-y^{2},g)$, we denote by $g_{2}=-C_{20}^{30}y^{2}-C_{11}^{30}yz-C_{02}^{30}z^{2}$, then we can see that $y\nmid g_{2}$. Since $g_{2}$ is a homogeneous polynomial, then 
$g_{2}=l_{1}m_{1}$ with $l_{1},m_{1}\in C[y,z]_{1}-\{y\}$. We analyze the following case.

\begin{itemize}
    \item  If  $C_{20}^{30}\neq 0$ and the resultant $Res(h_{1},g_{2})\neq 0$, then $I_{0}(h_{1}-y^{2},g)=2$, thus $I_{0}(f,g)=4$. If we assume that $Res(h_{1},g_{2})=0$, without loss of generality we may take $h_{1}=l_{1}$. Then 
     \begin{equation*}
     \begin{split}
     I_{0}(h_{1}-y^{2},g)=&I_{0}(-m_{1}(h_{1}-y^{2})+g, h_{1}-y^{2})\\
     &=I_{0}(y^{2}(m_{1}+y),h_{1}-y^{2})\\&=2I_{0}(y,h_{1}-y^{2})+I_{0}(m_{1}+y,h_{1}-y^{2}). 
     \end{split}
     \end{equation*}

    If $C_{20}^{30}\neq 0$, and $C_{11}^{30}=1$, then $I_{0}(y,h_{1}-y^{2})=\infty$, that is, the singular set is not isolated. Otherwise, if $C_{11}^{30}\neq 1$, then $y\nmid h_{1}$, thus $I_{0}(y,h_{1}-y^{2})=1$.  Moreover, in this case, $m_{1}=y+tz$, with $t=\frac{-C_{02}^{30}}{1-C_{11}^{30}}$, which implies that $Res(m_{1}+y, h_{1})\neq 0$, and hence $I_{0}(m_{1}+y,h_{1}-y^{2})=1$. Therefore, $I_{0}(f,g)=2+2+1=5$. \\
    \item  If $C_{20}^{30}=0$, then $I_{0}(h_{1}-y^{2},g)=I_{0}((C_{11}^{30}-1)z-y^{2},-z(C_{11}^{30}y+C_{02}^{30}z)+y^{3})$. Here we have two cases:\\
    \begin{enumerate}
     \item If $C_{11}^{30}\neq 1$. Let 
     \begin{equation*}
   \begin{split}
     H=&(\frac{1}{C_{11}^{30}-1})(C_{11}^{30}y+C_{02}^{30}z)[(C_{11}^{30}-1)z-y^{2}]+[-z(C_{11}^{30}y+C_{02}^{30}z)+y^{3}]\\=&(-\frac{1}{C_{11}^{30}-1})(y+C_{02}^{30}z)y^{2}.
     \end{split}
     \end{equation*}

Thus, 
\begin{equation*}
\begin{split}
I_{0}(h_{1}-y^{2},g)=&I_{0}(H,(C_{11}^{30}-1)z-y^{2})\\
=&2I_{0}(y,(C_{11}^{30}-1)z-y^{2})+I_{0}(C_{02}^{30}z+y,(C_{11}^{30}-1)z-y^{2})\\
=&2+1=3.
\end{split}
\end{equation*}
    
     \item If $C_{11}^{30}=1$. The intersection index  $I_{0}(f_{1}+y^{2},g)=2I_{0}(y,g)=4$ because $C_{02}^{30}\neq 0$. 

     Thus,  $I_{0}(h_{1}-y^{2},g)= 2I_{0}(y,g)=4$ with   $C_{11}^{30}=1, C_{02}^{30}\neq 0, C_{20}^{30}=0.$
 \end{enumerate}
\end{itemize}
\end{proof}


\begin{thm}
Consider the  partition $\lambda(2)=(3,2,1^{2})$ and the set $LT_{21}=\{y^{3},-yz^{2},y^{2}z\}$, $C_{02}^{21}\neq 0$ and $m_{[1:0:0]}(\omega_{\lambda(2)})=2$ for the $1$-form in (\ref{E:L2}). Then \[4\le I_{0}(f,g)\le 6.\]
 \end{thm}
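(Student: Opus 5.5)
Since $m_{[1:0:0]}(\omega_{\lambda(2)})=2$, the linear parts of $f$ and $g$ in item (2) of Subsection \ref{RepresentacionLocal} vanish. Writing $a=C_{20}^{21}$, $b=C_{11}^{21}$, $c=C_{02}^{21}\neq 0$, this gives
\begin{align*}
f=& -y^{2}+ayz+bz^{2}-yz^{2},\\
g=& -ay^{2}-byz-cz^{2}+y^{2}z.
\end{align*}

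The lower bound follows from the general inequality $I_{0}(f,g)\geq m_{0}(f)\,m_{0}(g)=2\cdot 2=4$. Both curves have multiplicity exactly $2$ at $0$: $f_{2}\neq 0$ because of the term $-y^{2}$, and $g_{2}\neq 0$ because $c\neq 0$.

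For the upper bound I will imitate the proof of Theorem \ref{M2-L2} and split off a factor $z$. A direct computation gives
\[
g-af=z\bigl(\ell+y^{2}+ayz\bigr), \qquad \ell=-(b+a^{2})y-(c+ab)z .
\]
Since $I_{0}(f,g)=I_{0}(f,g-af)$, additivity yields
\[
I_{0}(f,g)=I_{0}(f,z)+I_{0}(f,\ell+y^{2}+ayz).
\]
The first term is $2$, because $f(y,0)=-y^{2}$. It therefore remains to show $I_{0}(f,q)\leq 4$, where $q=\ell+y^{2}+ayz$.

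I will treat two cases according to whether $\ell$ vanishes.

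\emph{Case $\ell=0$.} Then $b=-a^{2}$ and $c=a^{3}$. Since $c\neq 0$, this forces $a\neq 0$, and $q=y(y+az)$. Restricting $f$ to each factor:
\begin{itemize}
\item $f(0,z)=bz^{2}=-a^{2}z^{2}$, so $I_{0}(f,y)=2$;
\item on $y=-az$ one gets $f=-3a^{2}z^{2}+O(z^{3})$, so $I_{0}(f,y+az)=2$.
\end{itemize}
Hence $I_{0}(f,g)=2+4=6$, which shows the bound $6$ is attained.

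\emph{Case $\ell\neq 0$.} Now $q=0$ is a smooth branch at $0$ with tangent line $\ell=0$. I will parametrize it, solving for $y$ or $z$ as a power series according to which coefficient of $\ell$ is nonzero, and compute the order of $f$ along it.
\begin{itemize}
\item The order is $2$ unless $\ell\mid f_{2}$.
\item If $\ell\mid f_{2}$, I will compute the coefficients of $t^{3}$ and $t^{4}$ in $f$ along the parametrization. The aim is to show they cannot both vanish, by combining the tangency condition with $c\neq 0$.
\end{itemize}
This gives $I_{0}(f,q)\leq 4$, and so $I_{0}(f,g)\leq 6$.

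The main obstacle is this last elimination. One has to solve the tangency condition $\ell\mid f_{2}$ for $b$ and $c$ in terms of $a$ and the slope of $\ell$, then check that simultaneous vanishing of the cubic and quartic coefficients forces either $c=0$ or $\ell=0$, both of which are excluded. I would first try the subcase where the coefficient $c+ab$ of $z$ in $\ell$ is nonzero, so that $z=\varphi(y)$ along $q=0$. The degenerate subcase $c+ab=0$, where $\ell$ is a multiple of $y$, will be handled separately. There $y\mid f_{2}$ forces $b=0$, hence $c=0$, which is a contradiction, so this subcase should close quickly.
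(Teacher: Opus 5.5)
Your reduction is correct, and it is a cleaner route than the paper's. The identity $g-af=z\,(\ell+y^{2}+ayz)$ makes $z$ a factor for every value of $a=C_{20}^{21}$. The paper instead splits into $C_{20}^{21}=0$ versus $C_{20}^{21}\neq 0$, then $C_{11}^{21}=0$ versus $C_{11}^{21}\neq 0$, factors $f_{2},g_{2}$ into lines and runs successive auxiliary polynomials such as $H=-l_{1}f+m_{1}g$. Your identity collapses all of that to the order of $f$ along the single curve $q=0$. This curve is either a pair of lines ($\ell=0$) or a smooth branch. Your case $\ell=0$ also exhibits the family $b=-a^{2}$, $c=a^{3}$, $a\neq 0$, on which the value $6$ is attained. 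The following parts are all correct: the lower bound, $I_{0}(f,z)=2$, the case $\ell=0$, and the degenerate subcase $c+ab=0$.

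However, as written the upper bound is not proved. The only situation in which $I_{0}(f,g)\geq 7$ could occur is $\ell\mid f_{2}$ with $c+ab\neq 0$. You only announce the computation for this case, and it is exactly the content of the theorem. It closes quickly. Put $\beta=-(b+a^{2})$ and $\gamma=-(c+ab)\neq 0$. Then $q=y(\beta+y)+z(\gamma+ay)$, so the branch is exactly $z=-y(\beta+y)/(\gamma+ay)$. Substituting gives $(\gamma+ay)^{2}f=y^{2}G(y)$ with
\[
G(y)=\bigl(b\beta^{2}-a\beta\gamma-\gamma^{2}\bigr)+\bigl(2b\beta-a^{2}\beta-\beta^{2}-3a\gamma\bigr)y+\bigl(b-2a^{2}-2\beta\bigr)y^{2}-y^{3}.
\]
Hence $I_{0}(f,q)=2+\mathrm{ord}_{0}G$, and the cubic is never identically zero. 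Now use $\beta=-(b+a^{2})$:
\begin{itemize}
\item The $y^{2}$-coefficient equals $3b$. If it vanishes, then $b=0$ and $\beta=-a^{2}$.
\item The $y$-coefficient then becomes $-3a\gamma$. Since $\gamma\neq 0$, its vanishing forces $a=0$.
\item The constant term is then $-\gamma^{2}=-c^{2}\neq 0$.
\end{itemize}
Therefore $\mathrm{ord}_{0}G\leq 2$, so $I_{0}(f,q)\leq 4$ and $I_{0}(f,g)\leq 6$. With this computation inserted, your argument is complete.
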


\begin{proof}
With the set $\{y^{3},-yz^{2},y^{2}z\}$ and $C_{02}^{21}\neq 0$, the polynomials reduce to 

\begin{align*}
f= &-y^{2}+C_{20}^{21}yz+C_{11}^{21}z^{2}-yz^{2},\\
g= &-C_{20}^{21}y^{2}-C_{11}^{21}yz+y^{2}z-C_{02}^{21}z^{2}.\\
\end{align*}

\begin{itemize}
\item  The coefficient $C_{20}^{21}=0$ if and only if $z\mid g$. In this case, if $g_{1}=C_{11}^{21}y+C_{02}^{21}z$, then $g=z(-g_{1}+y^{2})$. Thus, \[I_{0}(f,g)=I_{0}(f,z)+I_{0}(f,g_{1}-y^{2})=2+ I_{0}(f, g_{1}-y^{2}).\]

To compute $I_{0}(f, g_{1}-y^{2})$, let $f_{2}=-y^{2}+C_{11}^{21}z^{2}$ and $C_{11}^{21}\neq 0$. Then, if $g_{1}\nmid f_{2}$, this implies $I_{0}(f,g)=2+2=4$. Conversely, we assume that $g_{1}\mid f_{2}$, that is, $f_{2}=g_{1}f_{1}$ where $f_{1}=-\frac{1}{C_{11}^{21}}y+\frac{C_{11}^{21}}{C_{02}^{21}}z$ with the property $(C_{11}^{21})^{3}=(C_{02}^{21})^{2}$. Thus, 
\begin{equation*}
\begin{split}
I_{0}(f,g_{1}-y^{2})=&I_{0}(f_{1}y^{2}-yz^{2},g_{1}-y^{2})\\
=&I_{0}(y,g_{1}-y^{2})+I_{0}(f_{1}y-z^{2},g_{1}-y^{2})\\
=&1+ I_{0}(f_{1}y-z^{2},g_{1}-y^{2}).
\end{split}
\end{equation*}


Moreover,
\[I_{0}(f_{1}y-z^{2},g_{1}-y^{2})=\left\{ \begin{array}{lcc}
              2 &   if  & g_{1}\nmid f_{1}y-z^{2},\\
              3 & if &   g_{1}\mid f_{1}y-z^{2}, g_{1}^{2}\nmid f_{1}y-z^{2}, \\
              4 & if & g_{1}^{2}\mid f_{1}y-z^{2}.
             \end{array}
   \right.
 \]

Observe that if $g_{1}^{2}\mid f_{1}y-z^{2}$, then there is a constant $C\in \C^{*}$ such that $g_{1}^{2}=C*(f_{1}y-z^{2})$. Since $f_{1}y-z^{2}= \frac{1}{C_{11}^{21}}y^{2}-\frac{C_{02}^{21}}{C_{11}^{21}}yz-z^{2}$, by equality of equations we have $C=-(C_{11}^{21})^{3}$, but $C(f_{1}y-z^{2})-g_{1}^{2}=-3C_{11}^{21}C_{02}^{21}yz=0$, which is a contradiction, because $C_{11}^{21}C_{02}^{21}\neq 0$. This means that we have not the last case, thus, $I_{0}(f,g)<7$. 

On the other hand, if $C_{20}^{21}=C_{11}^{21}=0$, then \[I_{0}(f,g)=I_{0}(y,z)+I_{0}(y,C_{02}^{21}z-y^{2})+I_{0}(y+z^{2},z)+I_{0}(y+z^{2},C_{02}^{21}z-y^{2})=4.\]

\vskip1mm

\item If we assume that $C_{20}^{21}\neq 0$, then $g_{2}=-C_{20}^{21}y^{2}-C_{11}^{21}yz-C_{02}^{21}z^{2}$ is a product of two lines. 
We analyze two cases:\\
\begin{itemize}
    \item[a)] Case $C_{11}^{21}=0$. In this situation, $f=y(f_{1}-z^{2})$, where $f_{1}=-y+C_{20}^{21}z$. Thus, 
\[I_{0}(f,g)=I_{0}(y,g)+I_{0}(f_{1}-z^{2},g)=\left\{ \begin{array}{lcc}
              4 &   if  & f_{1}\nmid g_{2},\\
    
    2+I_{0}(H,f_{1}-z^{2}) & if &  g_{2}=f_{1}g_{1}, \mbox{ where } \\
    && H=-g_{1}(f_{1}-z^{2})+g \\ 
    && \mbox{ and } g_{1}\in\C[x,y]_{1} .\\
             \end{array}
   \right.
 \]

Since $H=z(g_{1}z+y^{2})$, then $I_{0}(H,f_{1}-z^{2})=I_{0}(z,f_{1}-z^{2})+I_{0}(g_{1}z+y^{2},f_{1}-z^{2})=1 + I_{0}(g_{1}z+y^{2},f_{1}-z^{2})$. Thus, 
\[I_{0}(H,f_{1}-z^{2})=\left\{ \begin{array}{lcc}
              3 &   if  & f_{1}\nmid g_{1}z+y^{2},\\
              4  & if  &  f_{1}\mid g_{1}z+y^{2}, \mbox{ but } f_{1}^{2}\nmid g_{1}z+y^{2},  \\ 
              5 & if & f_{1}^{2}\mid g_{1}z+y^{2}.\\
             \end{array}
   \right.
 \]
 However, if we have the last case above $I_{0}(H,f_{1}-z^{2})=5$, then $g_{1}=C_{20}^{21}y+(c_{20}^{21})^{2}z$ with $(C_{20}^{21})^{3}=-C_{02}^{21}$, and $0=g_{1}z+y^{2}-f_{1}^{2}=3C_{20}^{21}yz$ which is a contradiction. Hence, $I_{0}(f,g)<7$.\\

 \item[b)] Case $C_{11}^{21}\neq 0$. Let $f_{1},m_{1},h_{1},l_{1}\in \C[y,z]_{1}-\{y,z\}$ be linear polynomials such that $f_{2}=f_{1}m_{1}$ and $g_{2}=l_{1}h_{1}$. If $Res(g_{2},f_{2})\neq 0$, then $I_{0}(f,g)=4$. 
 
 Alternatively, if $f_{1}=h_{1}=y+\alpha z$, with $\alpha\in \C^{*}$, and let $H=-l_{1}f+m_{1}g=yz(l_{1}z+m_{1}y)$. Then \[I_{0}(f,g)=I_{0}(H,g)-3=1+I_{0}(l_{1}z+m_{1}y,g).\] 

 In this case, the local representation is 
 \begin{equation}\label{alpha}
 \begin{split}
 f=&\underbrace{(y+\alpha z)}_{f_{1}}\underbrace{(-y+(c_{20}^{21}+\alpha)z)}_{m_{1}}-yz^{2} \mbox{ with } (C_{20}^{21}+\alpha)\alpha=C_{11}^{21},\\
 g=&\underbrace{(y+\alpha z)}_{h_{1}=f_{1}}\underbrace{(-C_{20}^{21}y-\alpha^{2}z)}_{l_{1}}+y^{2}z \mbox{ with } \alpha^{3}=C_{02}^{21}.
 \end{split}
 \end{equation}
 
To calculate $I_{0}(l_{1}z+m_{1}y,g)$, denote \[S=l_{1}z+m_{1}y= -\alpha^{2}z^{2}+\alpha yz-y^{2}.\] We have the following cases:
\vskip2mm
\begin{itemize}
    \item[b.a)]If $Res(l_{1},S)\neq 0$ and $Res(h_{1},S)=Res(f_{1},S)\neq 0$, then  $I_{0}(l_{1}z+m_{1}y,g)=4$, hence, $I_{0}(f,g)=5$. 
    \item[b.b)] If $Res(y+\alpha z, S)=0$, then $\alpha=0$ which is a contradiction. Therefore, $f_{1}\nmid S$. Assume that $Res(l_{1},S)=0$, so $S=l_{1}s_{1}$ with $s_{1}\in\C[y,z]_{1}-\{y,z\}$. Then \[I_{0}(f,g)-1=I_{0}(l_{1},g)+I_{0}(s_{1},g)=3+I_{0}(s_{1},g).\] 
    
   From the previous analysis, $I_{0}(s_{1},g)=2$ if $s_{1}\neq l_{1}$, and $I_{0}(s_{1},g)=3$ if $s_{1}=l_{1}$. In this last case, $I_{0}(f,g)=7$, however, if $Res(l_{1}^{2},S)=0$, then by the conditions (\ref{alpha}), $C_{20}^{21}=0$, which is a contradiction. Therefore, $4\leq I_{0}(f,g) < 7$.
 \end{itemize}
\end{itemize}
\end{itemize}
\end{proof}

 In the following results, we focus on the case of algebraic multiplicity $1$. Let \[\omega_{\lambda}=f(y,z)dy+g(y,z)dz\] be a local representation as in section  (\ref{RepresentacionLocal}). Consider the homogeneous decomposition of $f$ and $g$, given by $f=f_{1}+f_{2}+f_{3}$, and $g=g_{1}+g_{2}+g_{3}$. 

\begin{thm}\label{M1-L2}
  If $f_{1}g_{1}=0$ and $(f_{1},g_{1})\neq (0,0)$,  then \[2\leq I_{0}(f,g)\leq 7.\] Moreover, the maximum value is attained when $\lambda=\lambda(2)=(3,2,1^{2})$ with \[LT_{2,1}=\{y^{3},-yz^{2},y^{2}z\}\] and the coefficients satisfy $C_{10}^{12}=0,C_{01}^{12}=0,C_{01}^{21}=0$, $C_{20}^{21}=0$, $C_{11}^{21}=0$, $C_{10}^{21}=C_{02}^{21}\neq 0$.
\end{thm}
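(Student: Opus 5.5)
The plan is to exploit the fact that, when exactly one of the linear parts vanishes, the curve defined by the other component is smooth at $0$. The intersection index then becomes the vanishing order of a single power series in one variable.

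Suppose $f_{1}=0\neq g_{1}$; the case $g_{1}=0\neq f_{1}$ is symmetric. The curve $\{g=0\}$ is smooth at $0$ with tangent line $g_{1}=0$, so it admits an analytic parametrization $\gamma(t)$ with $\gamma(0)=0$ and $\gamma'(0)\neq 0$. Hence
\[I_{0}(f,g)=\mathrm{ord}_{t}\, f(\gamma(t)).\]

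\textbf{Lower bound.} Since $f_{1}=0$, $f$ has order $\geq 2$ at $0$, and $\gamma$ is a smooth germ, so $\mathrm{ord}_{t}f(\gamma(t))\geq 2$. This gives $I_{0}(f,g)\geq 2$.

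\textbf{Upper bound.} I would not estimate this locally. Instead I would invoke Jouanolou's formula $\sum_{p}\mu_{p}(\F)=d^{2}+d+1=7$ for the degree-$2$ foliation $\omega_{\lambda}=Adx+Bdy+Cdz$, assuming its singularities are isolated (as in the standing hypotheses). Since $\mu_{[1:0:0]}=I_{0}(f,g)$ and every other Milnor number is nonnegative, $I_{0}(f,g)\leq 7$. For $\lambda(2)$ the point $[0:1:0]$ is never singular, and $[0:0:1]$ has been excluded by $C_{02}\neq 0$, which is consistent with all $7$ being concentrated at $[1:0:0]$.

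\textbf{Attainment.} I would go through the three local representations of Section \ref{RepresentacionLocal}. For each, I impose $f_{1}=0$, i.e.\ vanishing of the two linear coefficients in $f$, or symmetrically $g_{1}=0$, and solve $g=0$ for the variable occurring in $g_{1}$ as a power series in the other. Substituting into $f$, I would require successive Taylor coefficients to vanish. The relevant condition is whether the system forcing order $7$ is solvable.

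For $LT_{21}$ with $C_{10}^{12}=C_{01}^{12}=0$ and the stated extra vanishings $C_{01}^{21}=C_{20}^{21}=C_{11}^{21}=0$, the forms become
\[f=-y(y+z^{2}),\qquad g=-c\,y+y^{2}z-c\,z^{2},\qquad c=C_{10}^{21}=C_{02}^{21}\neq 0.\]
Then:
\begin{itemize}
\item $I_{0}(y,g)=I_{0}(y,-cz^{2})=2$.
\item Substituting $y=-z^{2}$ into $g$ gives $c z^{2}+z^{5}-cz^{2}=z^{5}$, so $I_{0}(y+z^{2},g)=5$.
\end{itemize}
Therefore $I_{0}(f,g)=7$, which verifies that the maximum is attained at the stated coefficients.

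\textbf{Main obstacle.} The hardest part is the ``moreover'' claim that this is where the maximum arises, i.e.\ that the other two representations ($\lambda(1)$ and $LT_{22}$), and the other subcases of $LT_{21}$, cannot reach $7$. I would first try to show that in those cases the first few Taylor coefficients of $f(\gamma(t))$ cannot all vanish. The model is the contradictions of the form $3C\,yz=0$ used in the multiplicity-$2$ theorems: in each case I expect a nonzero leading term such as $y^{3}$, $z^{3}$ or $yz^{2}$, fixed by $LT$, to survive in the expansion at order $\leq 6$.
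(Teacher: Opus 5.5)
Your bounds are correct, but you reach them by a different route from the paper. The paper does not use Jouanolou's formula in this proof. It obtains $2\le I_0(f,g)\le 7$ by computing $I_0$ explicitly in every subcase of the three local representations: it factors $g=y\cdot(\cdots)$ for $\lambda(1)$, $f=z\cdot(\cdots)$ for $LT_{22}$, and $f=y\cdot(\cdots)$ for $LT_{21}$ once $C_{11}^{21}=0$. It treats only the situations $f_1\in\C^{*}y,\ g_1=0$ and $f_1=0,\ g_1\in\C^{*}y$.

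Your lower bound $I_0\ge \mathrm{ord}(f)\,\mathrm{ord}(g)\ge 2$ and your upper bound $\mu_{[1:0:0]}\le\sum_p\mu_p=7$ are shorter and hold for arbitrary linear parts. The identification $\mu_{[1:0:0]}=I_0(B(1,y,z),C(1,y,z))$ is justified because $A(1,y,z)=-yB(1,y,z)-zC(1,y,z)$ by Euler. The isolatedness assumption costs nothing. A common factor $h$ of $A,B,C$ either vanishes at $[1:0:0]$, giving $I_0=\infty$, or leaves a foliation of degree at most $1$, whose index there is at most $3$. What the paper's route buys in exchange is the exact list of values in each subcase. Your check that the stated coefficients give $I_0=2+5=7$ is correct and coincides with the paper's last subcase.

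The gap is the characterization in the ``moreover'' part. You yourself identify it as the claim to be proved, and then leave it as an expectation. If the statement only says that $7$ is attained, you are done. Under the reading you adopt, however, Jouanolou gives nothing: equality only says $[1:0:0]$ is the unique singular point, which does not pin down the coefficients. The case analysis is then the whole content, and it has to be carried out.

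\begin{enumerate}
\item For $f_1\in\C^{*}y$, $g_1=0$: both $\lambda(2)$ representations give $I_0=2$. The restriction of $g$ to the curve $f=0$, which is tangent to $y=0$, starts with $-C_{02}^{21}z^{2}$ (resp.\ $-C_{02}^{30}z^{2}$), and these coefficients are nonzero. For $\lambda(1)$ the index is at most $5$.
\item For $f_1=0$, $g_1\in\C^{*}y$, with $\lambda(1)$: $I_0=I_0(f,y)\le 3$, since $g$ is $y$ times a unit.
\item For $f_1=0$, $g_1\in\C^{*}y$, with $LT_{22}$: $I_0$ is $2$, $3$ or $5$.
\item For $f_1=0$, $g_1\in\C^{*}y$, with $LT_{21}$: $I_0=2$ unless $C_{11}^{21}=0$. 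Then $I_0=3$ unless $C_{20}^{21}=0$. Then $I_0$ is $4$ or $7$ according as $C_{10}^{21}\neq C_{02}^{21}$ or not.
\end{enumerate}

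Your parametrization method can do each of these, but your heuristic needs adjusting. The term that survives is often not an $LT$ term but a coefficient forced nonzero by excluding $[0:0:1]$ or by $g_1\ne 0$. Finally, linear parts not proportional to $y$ are not treated in the paper's proof either. A genuine characterization would have to handle them as well.
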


\begin{proof}
We consider the local representation as in subsection (\ref{RepresentacionLocal}). 
    \begin{itemize}
        \item Case $f_{1}=*y$, $g_{1}=0$, when $*$ is a non-zero constant.
        \begin{itemize}
            \item[a)]For $\{\lambda(1),LT\}$, the local representation is 
            \begin{align*}
                f=&-C_{10}^{03}y-y^{2}-C_{11}^{03}yz+C_{11}^{12}z^{2}-z^{3},\\
                g=&-C_{20}^{12}y^{2}-C_{11}^{12}yz+yz^{2}.\\
            \end{align*}
             
Let $g(1)=-C_{20}^{12}y-C_{11}^{12}z$, so that $g=y(g(1)+z^{2})$. Then, \[I_{0}(f,g)=I_{0}(f,y)+I_{0}(f,g(1)+z^{2}).\] 
             For one hand, 
\[I_{0}(f,y)=\left\{ \begin{array}{lcc}
              2 &   if  & C_{11}^{12}\neq 0,\\
              3  &  if  & C_{11}^{12} =0.\\
             \end{array}
   \right.
 \]
 On the other hand, 
 \begin{itemize}
 \item If $C_{11}^{12}\neq 0$,  then $I_{0}(f,g(1)+z^{2})=1$.
 \item If $C_{11}^{12}=0$, define \[H=-C_{10}^{03}(-C_{20}^{21}y+z^{2})+C_{20}^{21}f=-C_{10}^{03}z^{2}-C_{20}^{21}y^{2}-C_{20}^{21}C_{11}^{-3}yz-C_{20}^{21}z^{3}.\] Then, 

\[I_{0}(f,g(1)+z^{2})=\left\{ \begin{array}{lcc}
              2 &   if  & C_{11}^{12}=0, C_{20}^{21}=0,\\
              I_{0}(H, -C_{20}^{21}y+z^{2}) =2 &  if  & C_{11}^{12}=0, C_{20}^{21}\neq 0.\\
             \end{array}
   \right.
 \]
 \end{itemize}
    Thus, we have $3\leq I_{0}(f,g)\leq 5$.\\
    \item[b)]For $\{\lambda(2),LT_{2,1},C_{02}^{21}\neq 0\}$  we have  
            \begin{align*}
                f=& -C_{10}^{12}y-y^{2}+C_{20}^{21}yz+C_{11}^{21}z^{2}-yz^{2},\\
                g=& -C_{20}^{21}y^{2}-C_{11}^{21}yz-C_{02}^{21}z^{2}+y^{2}z.
            \end{align*}
            Since $y\nmid g_{2}$, then $I_{0}(f,g)=2$.\\
    \item[c)] For $\{\lambda(2),LT_{2,2},C_{02}^{30}\neq 0\}$. The local representation is given by 
            \begin{align*}
                f=& -C_{10}^{12}y+C_{20}^{30}yz+(C_{11}^{30}-1)z^{2}-y^{2}z,\\
                g=&-C_{20}^{30}y^{2}+y^{3}-C_{11}^{30}yz-C_{02}^{30}z^{2}.
            \end{align*} Since $y\nmid g_{2}$, then $I_{0}(f,g)=2$.\\
        \end{itemize}
\item Case $f_{1}=0$, $g_{1}=*y$. \\
    \begin{itemize} 
    \item[a)] For $\{\lambda(1),LT\}$ we have
   \begin{align*}
    f=& -y^{2}-C_{11}^{03}yz+C_{11}^{02}z^{2}-z^{3},\\
    g=&y(T+z^{2}),
     \end{align*}
where $T= -C_{10}^{21}-C_{20}^{12}y-C_{11}^{12}y$. Since $C_{10}^{21}\neq 0$, we have $I_{0}(f,g)=I_{0}(f,y)$, and 
\[I_{0}(f,y)=\left\{ \begin{array}{lcc}
              2 &   if  & C_{11}^{02}\neq 0,\\
              3  &  if  & C_{11}^{02} =0.\\
             \end{array}
   \right.
 \]

\item[b)]For  $\{\lambda(2),LT_{2,1},C_{02}^{21}\neq 0\}$, the local representation is
\begin{align*}
    f=&-y^{2}+C_{20}^{21}yz+C_{11}^{21}z^{2}-yz^{2},\\
    g=& -C_{10}^{21}y-C_{20}^{21}y^{2}-C_{11}^{21}yz-C_{02}^{21}z^{2}+y^{2}z. 
\end{align*}
Since $g_{1}=-C_{10}^{21}y\neq 0$, if $y\nmid f_{2}=-y^{2}+C_{20}^{21}yz+C_{11}^{21}z^{2}$ (that is  $C_{11}^{21}\neq 0$), then $I_{0}(f,g)=2$. 

If we assume $C_{11}^{21}=0$, then $f_{2}=y(-y+C_{20}^{21}z-y^{2})$. Let $H_{0,1}=y+C_{20}^{21}z$, and we define \[H_{1}=-H_{0,1}g+C_{10}^{21}f,\] so that $I_{0}(f,g)=I_{0}(H_{1},g)$. 

We decompose $H_{1}$ into homogeneous components of degree $3$ and $4$:  $H_{1}:=H_{1,3}+H_{1,4}$. We have \[H_{1,3}=C_{20}^{21}y^{3}+(C_{20}^{21})^{2}y^{2}z+C_{02}^{21}yz^{2}+C_{02}^{21}C_{20}^{21}z^{3}-C_{10}^{21}yz^{2},\] and thus $I_{0}(H_{1},g)=3$  if $C_{20}^{21}\neq 0.$

Now, if $y\mid H_{1,3}$, then $C_{20}^{21}=0$, and  we can reformulate the local representation by 
\begin{align*}
    f=&-y(y+z^{2}),\\
    g=&-C_{10}^{21}y-C_{02}^{21}z^{2}+y^{2}z,
\end{align*}
with $C_{10}^{21}C_{02}^{21}\neq 0$. Then, \[I_{0}(f,g)=2+I_{0}(y+z^{2},g).\] To compute $I_{0}(y+z^{2},g)$, let \[H=g+C_{10}^{21}(y+z^{2})=z((-C_{02}^{21}+C_{10}^{21})z+y^{2}).\] Thus, \[I_{0}(y+z^{2},g)=I_{0}(H,g)=1+I_{0}((-C_{02}^{21}+C_{10}^{21})z+y^{2},g).\] Since

\[I_{0}((-C_{02}^{21}+C_{10}^{21})z+y^{2},g)=\left\{ \begin{array}{lcc}
              1 &   if  & -C_{02}^{21}\neq 0\\
              4  &  if  & C_{10}^{21}=C_{02}^{21}\neq 0 \\
             \end{array}
   \right.
 \]

 Then $4\leq I_{0}(f,g)\leq 7$.\\

\item[c)] For $\{\lambda(2),LT_{2,2},C_{02}^{30}\neq 0\}$, the local representation is  
\begin{align*}
    f=& z(f(1)-y^{2})\\
    g=&-C_{10}^{30}y-C_{20}^{30}y^{2}-C_{11}^{30}yz-C_{02}^{30}z^{2}+y^{3}
\end{align*}
where $f(1)= C_{20}^{30}y+(C_{11}^{30}-1)z$. Hence, $I_{0}(f,g)=1+I_{0}(f(1)-y^{2},g)$.

Now, 
\[I_{0}(f(1)-y^{2},g)=\left\{ \begin{array}{lcc}
              1 &   if  & C_{11}^{30}-1\neq 0\\
              2+I_{0}(C_{20}^{30}-y,g)  &  if  & C_{11}^{30}=1. \\
             \end{array}
   \right.
 \]

To compute $I_{0}(C_{20}^{30}-y,g)$,  consider two cases:\\

\begin{itemize}
 \item[1)] If $C_{20}^{30}\neq 0$, then $I_{0}(C_{20}^{30}-y,g)=0$, hence, $I_{0}(f,g)=3$.
 \item[2)] If $C_{11}^{30}=1$ and $C_{20}^{30}=0$. Then $f=-y^{2}z$ and  $g=-C_{10}^{30}y-yz-C_{02}^{30}z^{2}+y^{3}$. In this case, \[I_{0}(f,g)=2I_{0}(y,g)+I_{0}(z,g)=5.\]
 \end{itemize}
\end{itemize}\end{itemize}
    
\end{proof}


\begin{thm}
    If $f_{1}g_{1}\neq 0$, and $g_{2}=f_{2}$, then \[1\leq I_{0}(f,g)< 7\] for every $1$-form $\omega_{\lambda}$ and  for $\lambda=\lambda(1), \lambda(2)$.
\end{thm}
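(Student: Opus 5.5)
Throughout, $f=B(1,y,z)$ and $g=C(1,y,z)$ are the local representations of Subsection \ref{RepresentacionLocal}, and we run through the three cases $\{\lambda(1),LT\}$, $\{\lambda(2),LT_{2,1},C_{02}^{21}\neq 0\}$ and $\{\lambda(2),LT_{2,2},C_{02}^{30}\neq 0\}$ separately, as in the proof of Theorem \ref{M1-L2}.

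The lower bound $1\le I_{0}(f,g)$ is immediate, since $[1:0:0]$ is a singular point and so $f(0)=g(0)=0$. The index is finite once we know the singular set is isolated, and we assume this as in the previous theorems. All the work is in the strict upper bound $I_{0}(f,g)<7$.

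First, the generic situation. Since $f_{1}g_{1}\neq 0$, both linear parts are nonzero. If $f_{1}$ and $g_{1}$ are not proportional, then $0$ is a transversal intersection and $I_{0}(f,g)=1$. So we may assume $g_{1}=cf_{1}$ with $c\in\C^{*}$.

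In that case we replace $g$ by $G:=g-cf$, which does not change the index: $I_{0}(f,g)=I_{0}(f,G)$. The degree-one part of $G$ vanishes, and its degree-two part is $g_{2}-cf_{2}=(1-c)f_{2}$ by the hypothesis $g_{2}=f_{2}$. Because $f$ is smooth at $0$ (its linear part $f_{1}\neq 0$), we can parametrize the curve $\{f=0\}$ by a smooth branch $\gamma(t)$ tangent to the line $f_{1}=0$. Then $I_{0}(f,G)=\mathrm{ord}_{t}\,G(\gamma(t))$, so the index is determined by the first nonvanishing coefficient in this expansion.

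We now treat each of the three forms explicitly. For each, we read off which coefficients $C^{**}_{**}$ are forced by the two linear conditions "$g_{1}=cf_{1}$" and "$g_{2}=f_{2}$". These give an explicit short list of free parameters, because each $g_{2}=f_{2}$ identifies coefficients of $y^{2},yz,z^{2}$.

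Some sample consequences of $g_{2}=f_{2}$:
\begin{itemize}
\item For $\lambda(1)$, comparing the $y^{2}$ coefficients gives $-C^{12}_{20}=-1$, and comparing the $z^{2}$ coefficients gives $C^{12}_{11}=0$.
\item For $LT_{2,2}$, comparing the $yz$ coefficients gives $C^{30}_{20}=-C^{30}_{11}$, and comparing the $y^{2}$ coefficients gives $C^{30}_{20}=0$, since $f$ has no $y^{2}$ term. Hence $C^{30}_{11}=0$ as well.
\end{itemize}
After these substitutions, $f$ and $g$ become explicit polynomials in very few parameters. We then compute $\mathrm{ord}_{t}\,G(\gamma(t))$ by solving $f=0$ for the variable whose linear coefficient is nonzero, writing it as a power series in the other variable up to order $6$, and substituting into $G$.

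The main obstacle is showing that the coefficients of $t^{2},\dots,t^{6}$ in $G(\gamma(t))$ cannot all vanish. We first try to exhibit, in each case, a low-order coefficient that is a nonzero constant. The natural candidates are:
\begin{itemize}
\item the cubic terms $-z^{3}$, $yz^{2}$, $y^{2}z$ or $y^{3}$ with coefficient $\pm1$ coming from the leading terms $LT$;
\item the hypothesis $C_{02}^{**}\neq 0$.
\end{itemize}
If a parameter choice kills the low-order terms, we push to the next order. There we expect a contradiction of the same kind as in the preceding theorems, namely an identity such as $3\alpha\,yz=0$ with $\alpha\neq 0$.

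In the degenerate subcase $c=1$, $G$ has no quadratic part and is essentially cubic. Here the bound should instead come from $I_{0}\le$ (order of $G|_{\gamma}$) $\le 3\cdot\mathrm{ord}$ plus a correction, and we check directly that the parameters needed to reach $7$ force $f$ and $g$ to share a factor. That would make the singular set non-isolated, contradicting the isolatedness assumption.
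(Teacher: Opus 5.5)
\textbf{There is a genuine gap.} Your setup is correct: if $f_1,g_1$ are not proportional the index is $1$; otherwise $g_1=cf_1$, you pass to $G=g-cf$, and $I_0(f,g)=\mathrm{ord}_t\,G(\gamma(t))$ along the smooth branch $\gamma$ of $f=0$. Keeping a general $c$ is in fact more careful than the paper. The paper declares $f_1=g_1$ ``without loss of generality'', although rescaling $g$ would destroy $g_2=f_2$.

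The problem is that the proposal stops exactly where the proof has to begin. The decisive claim, that the coefficients of $t^2,\dots,t^6$ in $G(\gamma(t))$ cannot all vanish, is only announced (``we expect a contradiction''). For $c=1$ you offer an unspecified bound ``$3\cdot\mathrm{ord}$ plus a correction'' and an appeal to isolatedness, but that is not what bounds the index there. As written, nothing establishes $I_0(f,g)<7$.

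\textbf{How to close it.} The missing step is short once you finish imposing $g_2=f_2$, which fixes every quadratic coefficient.
\begin{itemize}
\item For $\lambda(1)$: $C^{12}_{20}=1$ and $C^{12}_{11}=C^{03}_{11}=0$, so $f=f_1-y^2-z^3$ and $g=g_1-y^2+yz^2$.
\item For $LT_{2,1}$: $C^{21}_{20}=1$, $C^{21}_{11}=-1$, $C^{21}_{02}=1$, so $f=f_1+Q-yz^2$ and $g=g_1+Q+y^2z$, where $Q=-y^2+yz-z^2=-(y-\zeta z)(y-\bar\zeta z)$ and $\zeta^2-\zeta+1=0$.
\item For $LT_{2,2}$: $C^{30}_{20}=C^{30}_{11}=0$ and $C^{30}_{02}=1$, so $f=f_1-z^2-y^2z$ and $g=g_1-z^2+y^3$.
\end{itemize}
Only $f_1$ and $c$ remain free.

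\emph{Case $c=1$.} Here $G=g_3-f_3$ is a parameter-free cubic, namely $z^2(y+z)$, $yz(y+z)$ and $y^2(y+z)$ in the three cases; this is the paper's key observation. Sum $I_0(f,L)$ over the linear factors $L$ of $G$. A line transverse to $f_1=0$ contributes $1$. The tangent line contributes $2$, since none of these lines divides $f_2$. This gives maxima $5,4,5$, and no common-factor or isolatedness issue arises.

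\emph{Case $c\neq 1$.} Since $G_2=(1-c)f_2$, the order is $2$ unless $f_1\mid f_2$. The exceptional subcases are as follows.
\begin{itemize}
\item For $\lambda(1)$ with $f_1\propto y$, one gets $G(\gamma)=cz^3+\dots$, so the order is $3$.
\item For $LT_{2,2}$ with $f_1\propto z$, one has $f=z\cdot(\text{unit})$ and $G(y,0)=y^3$, so the order is $3$.
\item For $LT_{2,1}$ with $f_1\propto u:=y-\zeta z$ (or the conjugate root), the $t^3$-coefficient is $\zeta(\zeta+c)$. When $c=-\zeta$ one finds $G=u\,[-(1+\zeta)(u+(\zeta-\bar\zeta)z)+uz+\zeta z^2]$, so $I_0=3+1=4$.
\end{itemize}
Writing these computations out would turn your outline into a complete proof. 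It would also cover the $c\neq 1$ case that the paper's argument omits.
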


\begin{proof}
 Without loss of generality, we assume that $f_{1}=g_{1}$.
 
 Let $H=-f+g$; then $I_{0}(f,g)=I_{0}(H,g)$. 
 
 Since $g_{2}=f_{2}$, then \[H=g_{3}-f_{3}=\prod_{i=1}^{3}L(i),\] where $L(i)$ is a linear polynomial. 
 
 Consequently, \[I_{0}(H,g)=\sum_{i=1}^{3}I_{0}(L(i),g).\]
 
 We can see that if $L(i)\neq g_{1}$, the $I_{0}(L(i),g)=1$, however, if $L(i)=g_{1}$, then
 
 \[ I_{0}(L(i),g)=\left\{ \begin{array}{lcc}
              2 &   if  & g_{1}\nmid g_{2}, \\
              3  &  if  &g_{1}\mid g_{2} \mbox{ and } g_{1}\nmid g_{3}.\\
             \end{array}
   \right.
 \] 
 
 Therefore,  $3\leq I_{0}(H,g)\leq 9$. 
 
 We now need to determine the case when $I_{0}(H,g)=7$. 
 There are two possible combinations leading to this value, namely  $(3,3,1)$ or $(3,2,2)$. That is, either 
 $H=g_{1}^{2}L(1)$, $g_{1}\mid f_{2}$, $g_{1}\nmid g_{3}$ and $L(1)\neq g_{1}$, or $H=g_{1}L(1)L(2)$ with  $L(1),L(2)\nmid g_{2}$ and $g_{1}\mid g_{2}$.

 However, in the last case we would have $L(i)=g_{1}\mid g_{2}$, 
 which yields a contradiction. 

Thus, it suffices to consider the case $H=g_{1}^{2}L(1)$, $g_{1}\mid f_{2}$, $g_{1}\nmid g_{3}$ and $L(1)\neq g_{1}$.\\

\begin{enumerate}
    \item for $\lambda(1)=(3^{2},1)$ and  $LT=\{y^{3},-z^{3},yz^{2}\}$.
\begin{align*}
f= &-C_{10}^{03}y-C_{01}^{03}z-y^{2}-C_{11}^{03}yz+C_{11}^{12}z^{2}-z^{3},\\
g= &-C_{10}^{12}y-C_{01}^{12}z-C_{20}^{12}y^{2}-C_{11}^{12}yz+yz^{2}.
\end{align*}

We can see that $H=g_{3}-f_{3}=z^{2}(y+z)$. Then, $g_{1}=z$, and since $z$ must divide $f_{2}$, this leads to  a contradiction.\\ 
\item For $\lambda(2)=(3,2,1^{2})$, $LT_{21}=\{y^{3},-yz^{2},y^{2}z\}$ and $C_{02}^{21}\neq 0$.
\begin{align*}
f= &-C_{10}^{12}y-C_{01}^{12}z-y^{2}+C_{20}^{21}yz+C_{11}^{21}z^{2}-yz^{2},\\
g= &-C_{10}^{21}y-C_{01}^{21}z-C_{20}^{21}y^{2}-C_{11}^{21}yz-C_{02}^{21}z^{2}+y^{2}z.
\end{align*}

Here, the polynomial $H=g_{3}-f_{3}=yz(y-z)$, we have not the form $H=g_{1}^{2}L(1)$.\\
\item For $\lambda(2)=(3,2,1^{2})$, $LT_{22}=\{yz^{2},-y^{2}z,y^{3}\}$, and $C_{02}^{30}\neq 0$.
\begin{align*}
f= &-C_{10}^{21}y-C_{01}^{21}z+C_{20}^{30}yz+(C_{11}^{30}-1)z^{2}-y^{2}z ,\\
g= &-C_{10}^{30}y-C_{01}^{30}z-C_{20}^{30}y^{2}-C_{11}^{30}yz-C_{02}^{30}z^{2}+y^{3}.\\
\end{align*}
The polynomial $H=g_{3}-f_{3}=y^{2}(y+z)$. In this case, $g_{1}=y$, and since $y$ must divide $f_{2}$, it follows that  $C_{11}^{30}=1$. Moreover, $g_{2}-f_{2}=0$, hence, $C_{20}^{30}=0$. However,  $C_{20}^{30}=-1$, which yields a contradiction.\\
\end{enumerate}
\end{proof}

For the case with $f_{2}\neq g_{2}$ we have the following result.

\begin{thm}
    If $f_{1}g_{1}\neq 0$, and $g_{2}\neq f_{2}$, then \[I_{0}(f,g)< 7\] for all $1$-form $\omega_{\lambda}$ with $\lambda=(3^{2},1),(3,2,1^{2})$.
\end{thm}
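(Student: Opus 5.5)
We work case by case on the three local representations of subsection \ref{RepresentacionLocal}, namely $\{\lambda(1),LT\}$, $\{\lambda(2),LT_{21},C_{02}^{21}\neq 0\}$ and $\{\lambda(2),LT_{22},C_{02}^{30}\neq 0\}$. Throughout we write $f=f_1+f_2+f_3$ and $g=g_1+g_2+g_3$ with $f_1g_1\neq 0$.

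\textbf{Step 1 (linear parts independent).} If $f_1$ and $g_1$ are linearly independent, the origin is a nondegenerate singularity and $I_0(f,g)=1<7$.

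\textbf{Step 2 (reduction to the parallel case).} It remains to treat $g_1=cf_1$ with $c\neq 0$. After rescaling, as in the previous theorem, we may assume $f_1=g_1=:\ell$. Replacing $f$ by $H=g-f$ gives
\[
I_0(f,g)=I_0(H,g), \qquad H=(g_2-f_2)+(g_3-f_3),
\]
and now $H_2=g_2-f_2\neq 0$ by hypothesis. A general line $L$ through $0$ satisfies $I_0(L,g)=1$. The line $\ell$ has $I_0(\ell,g)\ge 2$, and $I_0(\ell,g)\ge 3$ exactly when $\ell\mid g_2$. We then split according to how $H_2$ factors.

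\textbf{Step 3 (the two factorisations of $H_2$).}

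\emph{Case (i): $\ell\nmid H_2$.} The tangent cone of $H$ consists of two lines, neither tangent to the smooth curve $g=0$ (whose tangent is $\ell$). Hence $I_0(H,g)=2$.

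\emph{Case (ii): $\ell\mid H_2$.} Write $H_2=\ell m$ and eliminate once more. Take $H'=H-m\,g$; its quadratic part is zero, so its order is at least $3$, and its cubic part is
\[
H'_3=(g_3-f_3)-m\,g_2 .
\]
Recall that $g_3-f_3$ is an explicit monomial cubic, computed in the previous proof as $z^2(y+z)$, $yz(y-z)$ and $y^2(y+z)$ in the three cases. The contributions of the linear factors of $H'_3$ can then be bounded as in the previous theorem:
\[
I_0(H',g)\le \sum_i I_0(L_i,g).
\]
A value of $7$ forces $\ell$ to occur with multiplicity two in $H'_3$ together with $\ell\mid g_2$, or a similarly rigid configuration.

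\textbf{Step 4 (case-by-case exclusion).} For each of the three cases we impose these divisibility conditions on the explicit coefficients $C^{**}_{**}$. We expect contradictions of the type already met earlier, for instance forcing $C_{02}^{21}=0$ or $C_{02}^{30}=0$, or violating $f_1g_1\neq 0$.

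\textbf{Main obstacle.} The hardest step is Step 4 in case (ii). There $m$ depends on free parameters, so $H'_3$ is not monomial, and $\ell$ itself is an arbitrary common direction of $f_1,g_1$. To handle this we would parametrise $\ell=ay+bz$ and express the conditions $\ell\mid g_2$, $\ell\mid H_2$ and $\ell^2\mid H'_3$ as polynomial equations in $a,b$ and the $C$'s. Eliminating these should yield a contradiction, either via a resultant or by checking the subcases $b=0$, $a=0$ and $ab\neq 0$ separately.
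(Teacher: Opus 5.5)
\textbf{Verdict: there is a genuine gap.} Your Steps 1--2 and Case (i) reproduce the start of the paper's argument. The paper passes to $f-g$, stops with $I_0=2$ when $g_1\nmid f_2-g_2$, and otherwise writes $f_2-g_2=g_1m_1$ and subtracts $m_1g$. Case (ii), however, rests on a bound that is false. In the previous theorem $g_3-f_3$ was a homogeneous cubic, so $I_0(\cdot,g)$ was exactly additive over its linear factors. Here $H'=H'_3+H'_4$ with $H'_4=-m\,g_3$, which is nonzero in general. Since $g$ is smooth with tangent $\ell$, $I_0(H',g)$ is the order in $t$ of $H'(\gamma(t))$ for a parametrization $\gamma$ of $\{g=0\}$. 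Your sum $\sum_i I_0(L_i,g)$ is the order of $H'_3(\gamma(t))$ alone. Once $\ell\mid H'_3$, that order is at least $4$, the same as the order of $H'_4(\gamma(t))$, and the two leading terms can cancel.

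This cancellation actually occurs in the family of $\lambda(1)$. Take $f_1=g_1=y+bz$, $C_{20}^{12}=1$, $C_{11}^{12}=-b\nu$ and $C_{11}^{03}=\nu-b\nu$, so that $g_2-f_2=\nu z(y+bz)$. Choose $b=2+2\nu$ with $4\nu(1+\nu)^3=1+2\nu$ (so $\nu\approx 0.202$). Then $H'_3=z(y+bz)(\nu y+z/b)$ and $I_0(y+bz,g)=2$, so your bound gives $4$. Expanding $f$ along the branch $y+bz=b^2(1+\nu)z^2+\cdots$, the coefficients of $t^3$ and $t^4$ vanish and the $t^5$ coefficient is $-(1+\alpha^2\nu)\neq 0$ with $\alpha=b^2(1+\nu)$. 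Hence $I_0(f,g)=5$.

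So one elimination does not control $I_0$, and the ``rigid configurations'' you read off from the bound are not the right necessary conditions for $I_0=7$. The paper instead iterates $H(k+1)=H(k)-m_k\,g$ until $g_1$ no longer divides the leading form, reaching $H(4)$ and $H(5)$ in some cases. Equivalently, you could impose the vanishing of the coefficients of $t,\dots,t^6$ in $f(\gamma(t))$ and show the resulting system has no admissible solution.

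Second, Step 4 is only an expectation, and it carries the entire content of the theorem: as written, nothing in your proposal excludes $I_0=7$. The paper does this work explicitly. It splits into the nine shapes of $(g_1,m_1)$ with $g_1\in\{y,z,y+bz\}$ and $m_1\in\{cy,dz,cy+dz\}$, in each of the three families, and pushes the elimination until a leading form not divisible by $g_1$ appears. In the generic shape $g_1=y+bz$, $m_1=y+dz$ this needs a Gr\"obner basis computation showing that the divisibility conditions $P2,P3,P4$ generate the unit ideal of $\C[b,d]$.

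Finally, the normalization in Step 2 needs care. If $g_1=cf_1$ with $c\neq 1$, you must use $H=g-cf$. Its quadratic part $g_2-cf_2$ may vanish even though $g_2\neq f_2$, and its cubic part $g_3-cf_3$ depends on $c$. So the explicit cubics you quote only cover $c=1$; the paper makes the same normalization. Also, for $LT_{21}$ one has $g_3-f_3=yz(y+z)$, not $yz(y-z)$.
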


\begin{proof}
  With the notation of the local representation, if $g_{1}\neq f_{1}$, then $I_{0}(f,g)=1$. 
  
  Thus, we assume that $g_{1}=f_{1}$. Denote by $H(1)=f-g$, then $I_{0}(f,g)=I_{0}(H(1),g)$.
  
  Moreover, $H(1)=(f_{2}-g_{2})+H(1)_{3}$, where $H(1)_{3}\in \C[y,z]_{3}$.

  We observe that if $g_{1}\nmid f_{2}-g_{2}$, the $I_{0}(H(1),g)=2$. Thus, we can assume that $g_{1}\mid f_{2}-g_{2}$, which means that there exists a linear polynomial  $m_{1}$ such that \[f_{2}-g_{2}=g_{1}m_{1}.\] 
  
Consider the polynomial $H(2):=-m_{1}g+H(1)$, then, $I_{0}(H(1),g)=I_{0}(H(2),g)$.

The polynomial $H(2)$ consists of two homogeneous components: one of degree $3$, denoted by $H(2)_{3}$, and another of degree $4$, denoted by $H(2)_{4}$. In fact, \[H(2)_{3}=-m_{1}g_{1}+f_{3}-g_{3} \mbox{ and } H(2)_{4}=-m_{1}g_{3}.\] 

Since $I_{0}(H(2),g)=I_{0}(H(1),g)$, we can proceed as before: if $g_{1}\mid H(2)_{3}$, we can define another polynomial $H(3)$, which can be written as $H(3)=H(3)_{4}+H(3)_{5}$. Moreover, $I_{0}(H(3),g)=I_{0}(f,g)$.

Repeating this process a finite number of times allows us to obtain the necessary conditions for the intersection index $I_{0}(f,g)=7$.

However, by analyzing each possible case, we will see that this maximal value cannot be attained. In fact, we assume the following possible cases: If \[(g_{1},m_{1})\in \Big\{(y,cy),(z,cy),(y,dz),(z,dz),(y,cy+dz),\]\[(z,cy+dz),(y+bz,cy),(y+bz,dz),(y+bz,cy+dz)\Big\}\] where $b,c,d\in \C^{*}$ then, we analyze the coefficients in each case:

\begin{itemize}
\item $(g_{1},m_{1})=(y,cy)$. 
    \begin{itemize}
        \item For $(\lambda(1), LT)$: We have $H(2)_{3}=c(c+1)y^{3}-z^{3}-yz^{3}$. Thus, $g_{1}=y\nmid H(2)_{3}$, then \[I_{0}(f,g)=3.\] 
        \item For $(\lambda(2),LT_{21})$: In this case, $c\notin $\{0,1\}. If $c\notin \{\frac{-1\pm\sqrt{5}}{2}\}$, then we can consider the polynomial \[H(3)_{4}=-\left[(c^2+c)y^2-(c^2+c+1)yz+(c^2+c-1)z^2\right]\left[(c+1)(-y^2+yz-z^2)-cy^3z\right].\]
        Since $C(z^{4},H(3)_{4})=(c^{2}+c-1)(c+1)\neq 0$, it follows that \[I_{0}(f,g)=4.\] However, if $c\in \{\frac{-1\pm\sqrt{5}}{2}\}$, then we can consider the polynomial \[H(4)_{5}=(c+2)y^{5}+(-4c-10)y^{4}z+(7c+17)y^{3}z^{2}+(-8c-17)y^{2}z^{3}+(5c+10)yz^{4}+(-2c-4)z^{5}.\] In this case, $g_{1}\nmid H(4)_{5}$, and therefore \[I_{0}(g,f)=5.\]
        \item For $(\lambda(2),LT_{22})$: In this case, $c\neq -1$. We can consider the polynomial \[H(3)_{4}=-[(c^{2}-1)y^{2}-(c^{2}+1)yz+c(c+1)z^{2}][-cy^{2}+cyz-(1+c)z^{2}]-cy^{4}.\] Since the coefficient $C(z^{4},H(3)_{4})=-c(c+1)^{2}\neq 0$ then \[I_{0}(f,g)=4.\]
    \end{itemize}
\item $(g_{1},m_{1})=(z,cy)$ with $c\neq 0$.
    \begin{itemize}
        \item For $(\lambda(1),LT)$. Consider the polynomial \[H(2)_{3}=cy^{3}-z^{3}-yz^{3}.\] Since $C(y^{3},H(2)_{3})=c$, then \[I_{0}(f,g)=3.\]
        \item For $(\lambda(2),LT_{21})$, $c\neq 1$. We can use the polynomial \[H(2)_{3}=-cy(-y^{2}+(1-c)yz-(1-c)z^{2})-yz^{2}-y^{2}z,\] thus $C(y^{3},H(2)_{3})=c$, then \[I_{0}(f,g)=3.\]
        \item For $(\lambda(2),LT_{22})$. $c\neq 1$. We use the polynomial \[H(2)_{3}=(-cy)(-cyz-(1-c)z^{2})-y^{2}z-y^{3}.\] Since $C(y^3,H(2)_{3})=-1$ then \[I_{0}(f,g)=3.\]
    \end{itemize} 
\item $(g_{1},m_{1})=(y,dz)$ with $d\neq 0$. 
    \begin{itemize}
        \item For $(\lambda(1),LT)$: Consider the polynomial $H(2)_{3}=dy^{2}z-z^{3}+yz^{2}$. Since the coefficient $C(z^{3},H(2)_{3})=-1$, then \[I_{0}(f,g)=3.\]
        \item For $(\lambda(2),LT_{21})$:In this case, $d\neq 0,1$. Moreover, the polynomial \[H(2)_{3}=(d-1)y^{2}z+(d(d-1)-1)y^{2}z+d(1-d)z^{3}\] has the coefficient $C(z^{3},H(2)_{3})\neq 0$, then \[I_{0}(f,g)=3.\]
        \item For $(\lambda(2),LT_{22})$, $d\neq 1$. We have $d\neq 0,1$. Consider the polynomial \[H(2)_{3}=-(dz)(-dyz-(1-d)z^{2})-y^{2}z-y^{3}.\] Since the coefficient $C(z^{3},H(2)_{3})=d(1-d)\neq 0$, then \[I_{0}(g,f)=3.\]
    \end{itemize}
\item $(g_{1},m_{1})=(z,dz)$, with $d\neq 0$.
    \begin{itemize}
        \item $(\lambda(1),LT)$. The polynomial \[H(3)_{4}=-(dy^{2}-(d-1)yz-z^{2})(-y^{2}-dyz)-dz^{3}y.\] Since $C(y^{4},H(3)_{4})=d$ then \[I_{0}(g,f)=4.\]
        \item $(\lambda(2),LT_{21})$: $d\neq -1$. If $d=1$ then \[H(4)_{5}= z(2y^{3}-3y^{2}z+6yz^{2}-4z^{3})(-y^{2}+yz-2z^{2})-(-2yz+2z^{2})(y^{2}z),\] we can see that $C(y^{5},H(4)_{5})=-2$, then \[I_{0}(f,g)=5.\] If $d\neq 1$, we can use the polynomial \[H(3)_{4}=-[(d-1)y^{2}-(d+1)yz+d(d+1)z^{2}][-y^{2}+yz-(d+1)z^{2}]-dz^{2}y,\] since $C(y^{4},H(3)_{4})=d-1$ then \[I_{0}(f,g)=4.\]
        \item $(\lambda(2),LT_{22})$. Here, $d\neq -1$. Use the polynomial \[H(2)_{3}=d(d+1)z^{3}-y^{2}z-y^{3}.\] Since $C(y^{3},H(2)_{3})=-1$, then \[I_{0}(f,g)=3.\]
        \end{itemize}
\item $(g_{1},m_{1})=(y,cy+dz)$ with $cd\neq 0$.
        \begin{itemize}
            \item $(\lambda(1),LT)$; Consider the polynomial \[H(2)_{3}=c(c+1)y^{3}+d(c+1)y^{2}z-z^{3}-yz^{3}.\] Since $C(z^{3},H(2)_{3})=-1$ then \[I_{0}(f,g)=3.\]
            \item $(\lambda(2),LT_{21})$. In this case $c+1-d\neq 0$. The  polynomial \[H(2)_{3}=-(cy+dz)(-(c+1)y^{2}-(d-c-1)yz-(-d+c+1)z^{2})-yz^{2}-y^{2}z.\] Thus, $C(z^{3},H(2)_{3}=d(-d+c+1)\neq 0$, thus \[I_{0}(f,g)=3.\]
            \item $(\lambda(2),LT_{22})$, and $1-d-c\neq 0$. With the polynomial \[H(2)_{3}=-(cy+dz)g_{2}-y^{2}z-y^{3},\] we can see that $C(z^{3},H(2)_{3})=d(1-d-c)$, this \[I_{0}(f,g)=3.\]
        \end{itemize}
 \item $(g_{1},m_{1})=(z,cy+dz)$ with $cd\neq 0$. 
      \begin{itemize}
         \item $(\lambda(1),LT)$. We consider the polynomial \[H(2)_{3}=-(cy+dz)(-y^{2}-dyz)-z^{3}-yz^{2}.\] Since $C(y^{3},H(2)_{3})=c$, then \[I_{0}(g,f)=3.\]
         \item $(\lambda(2),LT_{21})$ and $d-c+1\neq 0$. With the polynomial \[H(2)_{3}=-(cy+dz)(-y^{2}-(c+-1)yz-(d-c+1)z^{2})-yz^{2}-y^{2}z,\] we can see that $C(y^{3},H(2)_{3})=c$, thus \[I_{0}(f,g)=3.\]
         \item $(\lambda(2),LT_{22})$ and $d-c+1\neq 0$. The polynomial \[H(2)_{3}=-(cy+dz)(-cyz-(d-c+1)z^{2})-y^{2}z-y^{3}\] has the coefficient $C(y^{3},H(2)_{3})=-1$, thus, \[I_{0}(f,g)=3.\] 
      \end{itemize}
 \item $(g_{1},m_{1})=(y+bz,dz)$ with $bd\neq 0$.
     \begin{itemize}
        \item For $(\lambda(1),LT)$. Consider the polynomial \[H(2)_{3}=(bd^{2}-1)yz^{2}-z^{3},\] if $g_{1}\mid h(2)_{3}$ then there exist $\alpha \in \C^{*}$ such that $\alpha(y+bz)=(bd^{2}-1)y-z$, then we have the polynomial \[H(3)_{4}=(\frac{1}{\alpha}bd-d)yz^{3},\] thus \[I_{0}(H(3),g)=4.\] 
        \item  ($\lambda(2),LT_{21}$).  Let $a=1/d$.  We can see that, if $b=-1$, then $f=y-z+yz-y^2-yz^{2}$ and $g=y-z-(y-z)(y+z)+y^{2}z$. Let \[H(2)=-zg+(f_{2}-g_{2})+a(f_{3}-g_{3})=z((1-a)y^{2}z-ayz^{2}-z^3-z^{2}y^{2}).\] The part of degree $3$ of this polynomial is \[H(2)_{3}=z((1-a)y^{2}-ayz-z^2).\] We can see that $y-z\nmid (1-a)y^{2}-ayz-z^2$, because $a\neq 0$, then \[I_{0}(f,g)=I_{0}(H(2),g)=3.\] In general, we have that $g=(y+bz)-(y^2+bz^{2})+y^2z$ and $f=(y+bz)+y(z-y)-yz^{3}$. Let \[H(2)=-zg+(f_{2}-g_{2})+a(f_{3}-g_{3}),\] then $H(2)=H(2)_{3}+H(2)_{4}$, where \[H(2)_{3}=z((1-a)y^{2}+bz^{2}-ayz),\] and \[H(2)_{4}=-z^{2}y^{2}.\] Moreover, $y+bz\mid  (1-a)y^{2}+bz^{2}-ayz$ if $b=-1$, and we have the first case. In other case, $y+bz\nmid H(2)_{3}$ and thus, \[I_{0}(f,g)=I_{0}(H(2),g)=3.\] 
        \item For $(\lambda(2),LT_{2,2})$. In this case, $g_{2}=-z(y+bz)$, $f_{2}=0$. Let \[H=-(1-z)f+g=y^{2}(z+y+z^{2}),\] then $I_{0}(f,g)=I_{0}(H,f)=2+I_{0}(z+y+z^{2},f)$. But $z+y=g_{1}$ if only if $b=1$, thus, let \[H(1)=-f+(z+y)+z^{2}=z(y^{2}+z),\] then $I_{0}(z+y+z^{2},f)=I_{0}(H(1),f)=I_{0}(z^{2}+y^{2}z,f)=2$. Thus, \[I_{0}(f,g)=4.\]    
     \end{itemize}
\item $(g_{1},m_{1})=(y+bz,cy+dz)$, with $cbd\neq 0$. In this case, we consider two cases:
\begin{itemize}
    \item If $g_{1}\equiv m_{1}$. 
    \begin{itemize}
        \item $(\lambda(1),LT)$. Consider the polynomial \[H(2)=H(2)_{3}+H(2)_{4},\] where \[H(2)_{3}=(y+bz)(-2y^{2}-b^{2}yz)-z^{2}(z+y),\] and \[H(2)_{4}=-(y+bz)yz^{2}.\] Since $g_{1}|H(2)_{3}$ if $b=1$. Consider the polynomial \[H(4)=H(4)_{5}+H(4)_{6},\] where \[H(4)_{5}=m_{3}y(2y+z)-yz^{2}m_{2},\] and \[H(4)_{6}=-yz^{2}m_{3},\] with $m_{2}=-(z+y)(-2y+z)$ and $m_{3}=2y(2y^{2}-z^{2})$. 
        We can see that $y+z\nmid H(4)_{5}=y(8y^{4}+4y^{3}z-6y^{2}z^{2}-3yz^{3}+z^{4})$. Thus, \[I_{0}(f,g)=I_{0}(H(4),g)=5.\]
        \item $(\lambda(2),LT_{2,1})$. In this case \[H(2)_{3}=(y+bz)(-2y^{2}-(2b-2)yz-(b^{2}-2b+2)z^{2})-yz(z+y).\] If $b=1$ then $g_{1}|H(2)_{3}$, thus $m_{2}=2y^{2}-yz+z^{2}$. Let \[H(3)=H(3)_{4}+H(3)_{5}\] where \[H(3)_{4}=-m_{2}(-2y^{2}-z^{2})-g_{1}y^{2}z=4y^{4}+3y^{2}z^{2}-3y^{3}z-yz^{3}+z^{4},\] and \[H(3)_{5}=-m_{2}y^{2}z.\] We can see that, $(y+z)\nmid H(3)_{4}$, thus, \[I_{0}(f,g)=I_{0}(H(3),g)=4.\]
        \item  $(\lambda(2),LT_{2,2})$. Consider the polynomial \[H(2)=H(2)_{3}+H(2)_{4},\] where \[H(2)_{3}=(y+bz)(-y^{2}-(-2b-1)yz-(b^{2}+2-2b)z^{2})-y^{2}(z+y).\] If $y+bz|H(2)_{3}$ this implies that $b=1$. Thus, \[H(2)_{3}=z(y+z)^{2}.\] We consider $m_{2}=z(y+z)$, and $m_{3}=zy^{2}+yz^{2}+z^{3}-y^{3}$, then we can defined \[H(4)=H(4)_{5}+H(4)_{6},\] where \[H(4)_{5}=-m_{3}(-y^{2}-yz-z^{2})-m_{2}y^{3}=-y^{5}-y^{4}z+3y^{2}z^{3}+2yz^{4}+z^{5},\] which it is not divisible by $g_{1}$. Thus, \[I_{0}(f,g)=I_{0}(H(4),g)=5.\]
    \end{itemize}
    \item If $g_{1}\neq m_{1}$. We assume that $g_{1}=y+bz$ and $m_{1}=y+dz$ with $(b,d)=1$.
    \begin{itemize}
        \item Case $(\lambda(1),LT)$. We have the polynomials 
        \begin{align*}
         f_{2}=&-y^{2}+(-bd+b+d)yz+bdz^{2},\\
         f_{3}=&-z^{3},\\
         g_{2}=&-y(2y+bdz),\\
         g_{3}=&y^{2}.
        \end{align*}
    Let $H(4)=H(4)_{5}+H(4)_{6}$, where $H(4)_{5}=-m_{3}g_{2}-yz^{2}m_{2}$, $H(4)_{6}=-m_{3}g_{3}$. In this case, the polynomial $m_{2}$ satisfy $m_{2}g_{1}=-m_{1}g_{2}
    +f_{3}-g_{3}$, and $m_{3}$ is a polynomial with the property $m_{3}g_{1}=y(m_{2}(2y+bdz)-z^{2}(y+dz))$. 
    
    The conditions given by the existence of $m_{2}$ and $m_{3}$ are given by \[P2:b^{3}d-b^2d^{2}-2b^{3}+2b^{2}d+b-1=0\] and \[P3:-b^{2}d^{3}+4b^{2}d^{2}-4b^{2}d+bd-3b-3d+6=0.\] If $g_{1}\mid H(4)_{5}$, there is a polynomial $m_{4}$ such that $g_{1}m_{4}=H(4)_{5}$. This polynomial implies that \[P4:-bd^{2}+b^{2}+4bd-2d^{2}-4b+16d-11=0.\] However, if we consider the ideal \[J=\langle P2,P3,P4\rangle,\] we can see that $J=\C[b,d]$. Moreover, $\langle P_{2}\rangle,\langle P_{2},P_{3}\rangle\neq \C[b,d]$. Hence, $b\equiv 0 \in \frac{\C[b,d]}{J}$, 
    this implies that the polynomials $m_{2},m_{3}$ and $m_{4}$ with their respective properties exist only when $b=0$, which contradicts the choice of $g_{1}$. Thus, \[I_{0}(f,g)=I_{0}(H(4),g)=5.\]
    \item $(\lambda(2),LT_{2,1})$. In this case, we consider the polynomials
    \begin{align*}
    f_{2}=&-y^{2}+2yz+(b+d-2)z^{2},\\ 
    g_{2}=&-2y^{2}-(b+d-2)yz-(bd-b-d+2)z^{2},\\ 
    f_{3}=&-yz^{2},\\ 
    g_{3}=&y^{2}z.
    \end{align*}
    Thus, we consider the polynomial $H(4)=H(4)_{5}+H(4)_{6}$. By the process of the $H(i)'s$, we  have the conditions \[P2:-b^{3}+b^{2}d-2\,b^{2}+2\,b\,d
     -d^{2}-b+2\,d=0\] and \[P3: b^{2}d^{2}-2\,b^{2}d+b\,d^{2}-
      d^{3}+6\,b^{2}-4\,b\,d+3\,d^{2
      }+9\,b-3\,d+2=0.\] If $g_{1}\mid H(4)_{5}$. There exist a polynomial $m_{4}$ with the condition  \[P4:15\,d^{3}+17\,b^{2}+66\,b\,d-
      111\,d^{2}+21\,b+156\,d+12=0,\] this implies that $b=0$ ( as in the previous case), which is a contradiction. Thus \[I_{0}(f,g)=I_{0}(H(4),g)=5.\]
     \item $(\lambda(2),LT_{2,2})$. We have the polynomials
     \begin{align*}
     f_{2}=&z(y+(b+d-2)z),\\ 
     g_{2}=&-y^{2}+(-b-d+1)yz+(-bd+b+d-2)z^{2}, \\ 
     f_{3}=&-y^{2}z, \\
     g_{3}=&y^{3}.
     \end{align*}
     
     Let $H(5)=H(5)_{6}+H(5)_{7}$, where $H(5)_{6}=-m_{4}g_{2}-m_{3}g_{3}$, $H(5)_{7}=-m_{4}g_{3}$, and $m_{4},m_{3}$ are polynomials obtained in the $H(i)'s$ process. In this process we have the conditions 
     \begin{align*}
     P2:&(bd^{2}-bd-d^{2}+2d)-b(d(d-2)-b(b-1)+2)=0,\\
     P3:&4bd^{2}-2d^{3}-b^2-12bd+7d^{2}+8b-8d+4=0, \mbox{ and }\\
     P4:&-\frac{1}{2}d^{4}-\frac{1}{4}b^{2}d+\frac{9}{2}d^{3}-\frac{21}{8}b^{2}+3bd-\frac{157}{8}d^{2}-6b+29d-\frac{15}{2}=0.
     \end{align*}
     
     Thus, if $g_{1}\mid H(5)_{6}$, then there exist a polynomial $m_{5}$ such that \[m_{5}g_{1}=z^{3}(A(5)y^{3}+B(5)y^{2}z+D(5)yz^{2}+E(5)z^{3})\] with $g_{1}m_{5}=H(5)_{6}$. By calculations performed in $Macaulay2$, we have the condition $P5:-3d+6=0$, which implies $d=2$.
     
     If we replace $d=2$, from the beginning we have $b=1$, thus, in this case:
     \begin{align*}
     f_{2}=&z(y+z), \\
     g_{2}=&-(y+z)^{2},
     \end{align*}
     If $H=f-g$, then $I_{0}(H,g)=I_{0}(g_{1},g)+I_{0}(2z+y-y^2,g))=4$.
    \end{itemize}
\end{itemize}
\end{itemize}
\end{proof}

\begin{rem}\label{F2}
    Consider the ideals given by the local $1$-form associated to the foliations of degree $2$ by \cite{CDGM}. 
    \begin{itemize}
        \item $I_{1}=\langle z^{2}-y^{3},zy^{2}\rangle$
        \item $I_{2}=\langle z^{2}-zy-y^{3},z^{2}+zy^{2}\rangle$
        \item $I_{3}=\langle yz-z^{2}y-y^{3},z^{3}+zy^{2}\rangle$
        \item $I_{4}=\langle z+y^{2}-z^{2}y,z^{2}+zy^{2}\rangle$
    \end{itemize}
    By the Gr\"obner fan, we can calculated the partitions associated to $I_{i}$. Since the degree of $I_{i}$ is $7$, then we have the following list of partitions:
    \begin{itemize}
        \item For $I_{1}$: $(2^{2},1),(3^{2},1)=\lambda(1)$.
        \item For $I_{2}$: $(3^{2},1)=\lambda(1),(3,1^{4}),(2^{2},1^{3}),(2,1^{5})$.
        \item For $I_{3}$: $(5,1^{2}),(4,1^{3}),(3^{2},1)=\lambda(1),(3,2,1^{2})=\lambda(2)$.
        \item For $I_{4}$: $(7),(5,2),(4,3),(4,2,1),(4,1^{3}),(3,2,1^{2})=\lambda(2), (2^{3},1)$.   
    \end{itemize}
 We can see for example, that there exists a change of basis for the ideal $I_{4}$ such that we can obtain local representatives as in Theorem (\ref{M1-L2}). 
Furthermore, there is no representative in the partition $\lambda(1)$. Similarly, for the other ideals $I_{i}$ with $i=1,2,3$, without leading to contradictions with the results of the remark \ref{F2}. 
\end{rem}


\section{The function $\phi_{3}$ for partitions of $N=13$.}\label{N13}

We continue with notation as in section \ref{functionP}. The following result is given for $d=3$, that is, partitions of $N=13$ and we can assume that $y\succ z$  for any monomial order.

\begin{lem}  $\mid \lambda(3,13)\mid =12$.
\end{lem}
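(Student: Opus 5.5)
The plan is to turn the condition $I_{\lambda}\in H_{\phi_{3}}$ into a purely combinatorial condition on the set of standard monomials $B_{\lambda}$, and then count the admissible staircases directly.

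\emph{Values of $\phi_{3}$.} First compute the values of $\phi_{3}$ from the definition in Section \ref{functionP}:
\[
\phi_{3}(0)=1,\quad \phi_{3}(1)=3,\quad \phi_{3}(2)=6,\quad \phi_{3}(3)=10,\quad \phi_{3}(4)=15-3=12,\quad \phi_{3}(s)=13 \ \text{for } s\geq 5 .
\]
For $s=5$ this is $21-8$. The regularity is $5=2d-1$, in agreement with Affirmation \ref{regularidad}.

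\emph{Translation to $B_{\lambda}$.} By Affirmation \ref{propiedades}(1), $I_{\lambda}\in H_{\phi_{3}}$ holds exactly when $B_{\lambda}$ has the following shape:
\begin{itemize}
\item it contains all $10$ monomials of degree $\leq 3$;
\item it contains exactly $2$ of the $5$ monomials of degree $4$; call this set $S_{4}$;
\item it contains exactly one monomial $m$ of degree $5$;
\item it contains nothing of degree $\geq 6$.
\end{itemize}
This accounts for $10+2+1=13$ elements, as required. Conversely, every such $B$ that is closed under division (Remark \ref{Mes}) is the standard set $B_{\lambda}$ of a unique partition $\lambda\vdash 13$, by Remark \ref{MyP}. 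So $|\lambda(3,13)|$ equals the number of pairs $(S_{4},m)$ for which $B$ is closed under division.

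\emph{Closure conditions.} Since all monomials of degree $3$ lie in $B$, any $2$-element $S_{4}$ is allowed, which gives $\binom{5}{2}=10$ choices. The only real constraint is on $m$: every degree-$4$ divisor of $m$ must lie in $S_{4}$. There are three kinds of $m$:
\begin{itemize}
\item $m=y^{5}$ needs $y^{4}\in S_{4}$, which happens for $4$ choices of $S_{4}$;
\item $m=z^{5}$ needs $z^{4}\in S_{4}$, which happens for $4$ choices of $S_{4}$;
\item $m=y^{a}z^{b}$ with $a,b>0$ needs $S_{4}=\{y^{a-1}z^{b},\,y^{a}z^{b-1}\}$, i.e. $S_{4}$ must be two adjacent degree-$4$ monomials. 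There are $4$ such pairs, each determining $m$ uniquely.
\end{itemize}
These three families of pairs $(S_{4},m)$ are disjoint because the monomials $m$ are different; the same $S_{4}$ (for instance $\{y^{4},y^{3}z\}$) may occur in more than one family with different $m$. Hence the total is $4+4+4=12$. The hypothesis $y\succ z$ plays no role in the count itself; it only fixes the orientation of the diagrams.

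\emph{Main obstacle.} The step needing the most care is the converse direction: checking that every such $B$ really yields Hilbert function exactly $\phi_{3}$. This is immediate from Affirmation \ref{propiedades}(1), because the number of standard monomials of each degree has been prescribed. Finally, I would list the $12$ partitions explicitly, reading each one off the rows of its staircase, and note that they come in dual pairs under $y\leftrightarrow z$, as in the $d=2$ case.
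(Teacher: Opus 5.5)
Your proposal is correct and follows essentially the same route as the paper. Both arguments use Affirmation \ref{propiedades} to reduce to choosing two standard monomials of degree $4$ and one of degree $5$ on top of all monomials of degree $\leq 3$. Both then split on whether the degree-$5$ standard monomial is mixed, where Remark \ref{Mes} forces the degree-$4$ pair, or is $y^{5}$ or $z^{5}$, where four choices remain for the other degree-$4$ monomial, giving $4+4+4=12$.
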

\begin{proof} For $d=3$ the function $\phi_{3}$ has the following values
\renewcommand{\arraystretch}{1.1} 
\begin{figure}[!h]
   \begin{center}
     \begin{tabular}{|c|c|c|c|c|c|c|c|c|c|c|c|}\hline
       $d$& $s$     & $0$ & $1$ & $2$ & $3$ & $4$ & $5$ & $6$ & $7$  & $8$ & $\cdots$  \\\hline
       $3$&$\phi(s)$& $1$ & $3$ & $6$ & $10$& $12$&$13$ & $13$& $13$ & $13$& $\cdots$   \\\hline
     \end{tabular}
   \end{center}
   \label{phi}
 \end{figure}

 Let $\lambda$ be a partition for the number $13$ such that $HF_{I_{\lambda}}=\phi_{3}$. By the Affirmation (\ref{regularidad}), the regularity of $\phi_{3}$ is $5$, and by definition of $\phi_{3}$ all monomial of degree $s\leq 3$ are standard monomial of $\lambda$. Therefore, it is enough to analyze the cases $s=4$ and $s=5$.
\vskip2mm
\noindent By Affirmation (\ref{propiedades}-3), there are three
 monomials of degree $4$, or equivalently, two
 standard monomials of degree $4$ in $I_{\lambda}$. Moreover, by Affirmation
 (\ref{propiedades}-2), there is only one standard monomial
 of degree $5$ in $\lambda$, or equivalently, there are $5$ monomials
 of degree $5$ in $I_{\lambda}$.
\vskip2mm
\noindent  Let $(\alpha,\beta)\in \N\times\N$ such that $\alpha+\beta=5$. If
 $\alpha,\beta\neq 0$, then $(\alpha-1,\beta)$ and $(\alpha,\beta-1)$ are
 standard monomials ( Remark \ref{Mes}). That is, $\lambda$ is
 completely determined by the choice of $(\alpha,\beta)\in \lambda$. Whit this, we
 have the four partitions, as we can see in the Figure (\ref{a+b=5}).

 \begin{figure}[!h]
    \begin{tikzpicture}[scale=0.5]
   \filldraw[draw=black!10,fill=blue!15]plot[circle]
    coordinates{(0,0)(4,0)(4,1)(3,1)(3,2)(2,2)(2,5)(0,5)(0,0)};
    \filldraw[dashed][draw=yellow!15,fill=yellow!30]plot[circle]
   coordinates{(4,0)(4,1)(3,1)(3,2)(2,2)(2,5)(0,5)(0,6)(5,6)(5,0)};
   \draw[step=1,color=gray!40] (0,0) grid (5,6);
   \draw[green](4,0)--(0,4);
   \draw[blue](5,0)--(0,5);
   \foreach \x in {0,1,...,5}
   \draw (\x cm, 1pt)--(\x cm,-1pt) node[anchor=north]{$\x$};
   \foreach \y in {1,2,...,6}
   \draw (1pt,\y cm)--(-1pt,\y cm) node[anchor=east]{$\y$};
   \fill[red!90](1,4) circle (3pt) node[]{};
   \fill[red!90](0,4) circle (3pt) node[]{};
   \fill[red!90](1,3) circle (3pt) node[]{};
 \end{tikzpicture}
  \begin{tikzpicture}[scale=0.5]
   \filldraw[draw=black!10,fill=blue!15]plot[circle]
    coordinates{(0,0)(4,0)(4,1)(3,1)(3,4)(0,4)(0,0)};
    \filldraw[dashed][draw=yellow!15,fill=yellow!30]plot[circle]
   coordinates{(4,0)(4,1)(3,1)(3,4)(0,4)(0,6)(5,6)(5,0)(4,0)};
   \draw[step=1,color=gray!40] (0,0) grid (5,6);
   \draw[green](4,0)--(0,4);
   \draw[blue](5,0)--(0,5);
   \foreach \x in {0,1,...,5}
   \draw (\x cm, 1pt)--(\x cm,-1pt) node[anchor=north]{$\x$};
   \foreach \y in {1,2,...,6}
   \draw (1pt,\y cm)--(-1pt,\y cm) node[anchor=east]{$\y$};
   \fill[red!90](2,3) circle (3pt) node[]{};
   \fill[red!90](2,2) circle (3pt) node[]{};
    \fill[red!90](1,3) circle (3pt) node[]{};
 \end{tikzpicture}
  \begin{tikzpicture}[scale=0.5]
   \filldraw[draw=black!10,fill=blue!15]plot[circle]
    coordinates{(0,0)(4,0)(4,3)(1,3)(1,4)(0,4)(0,0)};
    \filldraw[dashed][draw=yellow!15,fill=yellow!30]plot[circle]
   coordinates{(4,0)(4,3)(1,3)(1,4)(0,4)(0,6)(5,6)(5,0)(4,0)};
   \draw[step=1,color=gray!40] (0,0) grid (5,6);
   \draw[green](4,0)--(0,4);
   \draw[blue](5,0)--(0,5);
   \foreach \x in {0,1,...,5}
   \draw (\x cm, 1pt)--(\x cm,-1pt) node[anchor=north]{$\x$};
   \foreach \y in {1,2,...,6}
   \draw (1pt,\y cm)--(-1pt,\y cm) node[anchor=east]{$\y$};
   \fill[red!90](3,2) circle (3pt) node[]{};
   \fill[red!90](3,1) circle (3pt) node[]{};
   \fill[red!90](2,2) circle (3pt) node[]{};
 \end{tikzpicture}
  \begin{tikzpicture}[scale=0.5]
   \filldraw[draw=black!10,fill=blue!15]plot[circle]
    coordinates{(0,0)(5,0)(5,2)(2,2)(2,3)(1,3)(1,4)(0,4)(0,0)};
    \filldraw[dashed][draw=yellow!15,fill=yellow!30]plot[circle]
   coordinates{(5,0)(5,2)(2,2)(2,3)(1,3)(1,4)(0,4)(0,6)(5,6)(5,0)};
   \draw[step=1,color=gray!40] (0,0) grid (5,6);
   \draw[green](4,0)--(0,4);
   \draw[blue](5,0)--(0,5);
   \foreach \x in {0,1,...,5}
   \draw (\x cm, 1pt)--(\x cm,-1pt) node[anchor=north]{$\x$};
   \foreach \y in {1,2,...,6}
   \draw (1pt,\y cm)--(-1pt,\y cm) node[anchor=east]{$\y$};
   \fill[red!90](4,1) circle (3pt) node[]{};
   \fill[red!90](4,0) circle (3pt) node[]{};
   \fill[red!90](3,1) circle (3pt) node[]{};
 \end{tikzpicture}
 \caption{$(\alpha,\beta), \alpha+\beta=5.$}\label{a+b=5}
 \end{figure}

 \begin{enumerate}
 \item If $yz^{4}\in B_{\lambda}$ then $\lambda=(4,3,2,2,2).$
 \item If $y^{2}z^{3}\in B_{\lambda}$ then $\lambda=(4,3,3,3).$
 \item If $y^{3}z^{2}\in B_{\lambda}$ then $\lambda=(4,4,4,1).$
 \item If $y^{4}z\in B_{\lambda}$ then $\lambda=(5,5,2,1).$
 \end{enumerate}

 \vspace{1cm}

 If $(5,0)\in \lambda$, then $(4,0)\in\lambda$. In this case, there are four possible ways to choose the remaining standard monomial of degree $4$. Thus, we have the following $4$ partitions, which we can visualize in the Figure (\ref{50-40}).

\begin{itemize}
\item If $y^{5}\in B_{\lambda}$ then we have the following cases:

\vskip1mm

 \begin{enumerate}[resume]
 \item If $z^{4}\in B_{\lambda}$ then $\lambda=(6,3,2,1,1.$
 \item If $yz^{3}\in B_{\lambda}$ then $\lambda=(6,3,2,2).$
 \item If $y^{2}z^{2}\in B_{\lambda}$ then $\lambda=(6,3,3,1).$
 \item If $y^{3}z\in B_{\lambda}$ then $\lambda=(6,4,2,1).$
 \end{enumerate}
\end{itemize}
 
  \begin{figure}[!h]
    \begin{tikzpicture}[scale=0.5]
   \filldraw[draw=black!10,fill=blue!15]plot[circle]
    coordinates{(0,0)(6,0)(6,1)(3,1)(3,2)(2,2)(2,3)(1,3)(1,5)(0,5)(0,0)};
    \filldraw[dashed][draw=yellow!15,fill=yellow!30]plot[circle]
   coordinates{(6,0)(6,1)(3,1)(3,2)(2,2)(2,3)(1,3)(1,5)(0,5)(0,6)(6.5,6)(6.5,0)(6,0)};
   \draw[step=1,color=gray!40] (0,0) grid (6.5,6);
   \draw[green](4,0)--(0,4);
   \draw[blue](5,0)--(0,5);
   \foreach \x in {0,1,...,6}
   \draw (\x cm, 1pt)--(\x cm,-1pt) node[anchor=north]{$\x$};
   \foreach \y in {1,2,...,6}
   \draw (1pt,\y cm)--(-1pt,\y cm) node[anchor=east]{$\y$};
   \fill[red!90](5,0) circle (3pt) node[]{};
   \fill[red!90](4,0) circle (3pt) node[]{};
   \fill[red!90](0,4) circle (3pt) node[]{};
 \end{tikzpicture}
  \begin{tikzpicture}[scale=0.5]
   \filldraw[draw=black!10,fill=blue!15]plot[circle]
    coordinates{(0,0)(6,0)(6,1)(3,1)(3,2)(2,2)(2,4)(0,4)(0,0)};
    \filldraw[dashed][draw=yellow!15,fill=yellow!30]plot[circle]
   coordinates{(6,0)(6,1)(3,1)(3,2)(2,2)(2,4)(0,4)(0,6)(6.5,6)(6.5,0)(6,0)};
   \draw[step=1,color=gray!40] (0,0) grid (6.5,6);
   \draw[green](4,0)--(0,4);
   \draw[blue](5,0)--(0,5);
   \foreach \x in {0,1,...,6}
   \draw (\x cm, 1pt)--(\x cm,-1pt) node[anchor=north]{$\x$};
   \foreach \y in {1,2,...,6}
   \draw (1pt,\y cm)--(-1pt,\y cm) node[anchor=east]{$\y$};
   \fill[red!90](5,0) circle (3pt) node[]{};
   \fill[red!90](4,0) circle (3pt) node[]{};
    \fill[red!90](1,3) circle (3pt) node[]{};
 \end{tikzpicture}
  \begin{tikzpicture}[scale=0.5]
   \filldraw[draw=black!10,fill=blue!15]plot[circle]
    coordinates{(0,0)(6,0)(6,1)(3,1)(3,3)(1,3)(1,4)(0,4)(0,0)};
    \filldraw[dashed][draw=yellow!15,fill=yellow!30]plot[circle]
   coordinates{(6,0)(6,1)(3,1)(3,3)(1,3)(1,4)(0,4)(0,6)(6.5,6)(6.5,0)(6,0)};
   \draw[step=1,color=gray!40] (0,0) grid (6.5,6);
   \draw[green](4,0)--(0,4);
   \draw[blue](5,0)--(0,5);
   \foreach \x in {0,1,...,6}
   \draw (\x cm, 1pt)--(\x cm,-1pt) node[anchor=north]{$\x$};
   \foreach \y in {1,2,...,6}
   \draw (1pt,\y cm)--(-1pt,\y cm) node[anchor=east]{$\y$};
   \fill[red!90](5,0) circle (3pt) node[]{};
   \fill[red!90](4,0) circle (3pt) node[]{};
   \fill[red!90](2,2) circle (3pt) node[]{};
 \end{tikzpicture}
  \begin{tikzpicture}[scale=0.5]
   \filldraw[draw=black!10,fill=blue!15]plot[circle]
    coordinates{(0,0)(6,0)(6,1)(4,1)(4,2)(2,2)(2,3)(1,3)(1,4)(0,4)(0,0)};
    \filldraw[dashed][draw=yellow!15,fill=yellow!30]plot[circle]
   coordinates{(6,0)(6,1)(4,1)(4,2)(2,2)(2,3)(1,3)(1,4)(0,4)(0,6)(6.5,6)(6.5,0)(6,0)};
   \draw[step=1,color=gray!40] (0,0) grid (6.5,6);
   \draw[green](4,0)--(0,4);
   \draw[blue](5,0)--(0,5);
   \foreach \x in {0,1,...,6}
   \draw (\x cm, 1pt)--(\x cm,-1pt) node[anchor=north]{$\x$};
   \foreach \y in {1,2,...,6}
   \draw (1pt,\y cm)--(-1pt,\y cm) node[anchor=east]{$\y$};
   \fill[red!90](5,0) circle (3pt) node[]{};
   \fill[red!90](4,0) circle (3pt) node[]{};
   \fill[red!90](3,1) circle (3pt) node[]{};
 \end{tikzpicture}
 \caption{Staircase diagrams with $(5,0),(4,0)\in\lambda$.}\label{50-40}
\end{figure}

Similarly, if $(0,5)\in\lambda$, then we again obtain  four partitions. The corresponding staircase diagrams are shown in Figure (\ref{(0504)}).


 \begin{figure}[!h]
    \begin{tikzpicture}[scale=0.5]
   \filldraw[draw=black!10,fill=blue!15]plot[circle]
    coordinates{(0,0)(4,0)(4,1)(3,1)(3,2)(2,2)(2,4)(1,4)(1,6)(0,6)(0,0)};
    \filldraw[dashed][draw=yellow!15,fill=yellow!30]plot[circle]
   coordinates{(4,0)(4,1)(3,1)(3,2)(2,2)(2,4)(1,4)(1,6)(0,6)(0,6.5)(5,6.5)(5,0)(4,0)};
   \draw[step=1,color=gray!40] (0,0) grid (5,6.5);
   \draw[green](4,0)--(0,4);
   \draw[blue](5,0)--(0,5);
   \foreach \x in {0,1,...,5}
   \draw (\x cm, 1pt)--(\x cm,-1pt) node[anchor=north]{$\x$};
   \foreach \y in {1,2,...,6}
   \draw (1pt,\y cm)--(-1pt,\y cm) node[anchor=east]{$\y$};
   \fill[red!90](0,5) circle (3pt) node[]{};
   \fill[red!90](0,4) circle (3pt) node[]{};
   \fill[red!90](1,3) circle (3pt) node[]{};
 \end{tikzpicture}
  \begin{tikzpicture}[scale=0.5]
   \filldraw[draw=black!10,fill=blue!15]plot[circle]
    coordinates{(0,0)(4,0)(4,1)(3,1)(3,3)(1,3)(1,6)(0,6)(0,0)};
    \filldraw[dashed][draw=yellow!15,fill=yellow!30]plot[circle]
   coordinates{(4,0)(4,1)(3,1)(3,3)(1,3)(1,6)(0,6)(0,6.5)(5,6.5)(5,0)(4,0)};
   \draw[step=1,color=gray!40] (0,0) grid (5,6.5);
   \draw[green](4,0)--(0,4);
   \draw[blue](5,0)--(0,5);
   \foreach \x in {0,1,...,5}
   \draw (\x cm, 1pt)--(\x cm,-1pt) node[anchor=north]{$\x$};
   \foreach \y in {1,2,...,6}
   \draw (1pt,\y cm)--(-1pt,\y cm) node[anchor=east]{$\y$};
   \fill[red!90](0,4) circle (3pt) node[]{};
   \fill[red!90](0,5) circle (3pt) node[]{};
    \fill[red!90](2,2) circle (3pt) node[]{};
 \end{tikzpicture}
  \begin{tikzpicture}[scale=0.5]
   \filldraw[draw=black!10,fill=blue!15]plot[circle]
    coordinates{(0,0)(4,0)(4,2)(2,2)(2,3)(1,3)(1,6)(0,6)(0,0)};
    \filldraw[dashed][draw=yellow!15,fill=yellow!30]plot[circle]
   coordinates{(4,0)(4,2)(2,2)(2,3)(1,3)(1,6)(0,6)(0,6.5)(5,6.5)(5,0)(4,0)};
   \draw[step=1,color=gray!40] (0,0) grid (5,6.5);
   \draw[green](4,0)--(0,4);
   \draw[blue](5,0)--(0,5);
   \foreach \x in {0,1,...,5}
   \draw (\x cm, 1pt)--(\x cm,-1pt) node[anchor=north]{$\x$};
   \foreach \y in {1,2,...,6}
   \draw (1pt,\y cm)--(-1pt,\y cm) node[anchor=east]{$\y$};
   \fill[red!90](0,5) circle (3pt) node[]{};
   \fill[red!90](0,4) circle (3pt) node[]{};
   \fill[red!90](3,1) circle (3pt) node[]{};
 \end{tikzpicture}
  \begin{tikzpicture}[scale=0.5]
   \filldraw[draw=black!10,fill=blue!15]plot[circle]
    coordinates{(0,0)(5,0)(5,1)(3,1)(3,2)(2,2)(2,3)(1,3)(1,6)(0,6)(0,0)};
    \filldraw[dashed][draw=yellow!15,fill=yellow!30]plot[circle]
   coordinates{(5,0)(5,1)(3,1)(3,2)(2,2)(2,3)(1,3)(1,6)(0,6)(0,6.5)(5.5,6.5)(5.5,0)(5,0)};
   \draw[step=1,color=gray!40] (0,0) grid (5.5,6.5);
   \draw[green](4,0)--(0,4);
   \draw[blue](5,0)--(0,5);
   \foreach \x in {0,1,...,5}
   \draw (\x cm, 1pt)--(\x cm,-1pt) node[anchor=north]{$\x$};
   \foreach \y in {1,2,...,6}
   \draw (1pt,\y cm)--(-1pt,\y cm) node[anchor=east]{$\y$};
   \fill[red!90](0,5) circle (3pt) node[]{};
   \fill[red!90](0,4) circle (3pt) node[]{};
   \fill[red!90](4,0) circle (3pt) node[]{};
 \end{tikzpicture}
 \caption{Staircase diagrams with $(0,5),(0,4)\in \lambda$.}\label{(0504)}
\end{figure}

\begin{itemize}
\item If $z^{5}\in B_{\lambda}$ then the cases we have are:
  \begin{enumerate}
  \item[(9)] if $yz^{3}\in B_{\lambda}$ then $\lambda=(4,3,2,2,1,1).$
  \item[(10)] if $y^{2}z^{2}\in B_{\lambda}$ then $\lambda=(4,3,3,1,1,1).$
  \item[(11)] if $y^{3}z\in B_{\lambda}$ then $\lambda=(4,4,2,1,1,1).$
  \item[(12)] if $y^{4}\in B_{\lambda}$ then $\lambda=(5,3,2,1,1,1).$
  \end{enumerate}
\end{itemize}
\end{proof}

Note that among the twelve partitions described above, they occur in dual pairs. See Figure (\ref{Dual3}). Then, we only to analyze six partitions in $\lambda(3,13)$, analogous to the case $d=2$: \[(4,3,2^{3}),(4^{3},1),(4,3,2^{2},1^{2}),(4,3^{2},1^{3}),(4^{2},2,1^{3}),(5,3,2,1^{3}).\]
  
\begin{figure}[!h]
   \begin{center}
     \begin{tabular}{|l|c|}\hline
       Partitions & Dual partition \\\hline
       $\lambda(1)=(4,3,2,2,2)$& $(5,5,2,1)$      \\\hline
       $\lambda(2)=(4,4,4,1)$  & $(4,3,3,3)$      \\\hline
       $\lambda(3)=(4,3,2,2,1,1)$& $(6,4,2,1)$    \\\hline
       $\lambda(4)=(4,3,3,1,1,1)$ & $(6,3,3,1)$   \\\hline
       $\lambda(5)=(4,4,2,1,1,1)$ & $(6,3,2,2)$   \\\hline
       $\lambda(6)=(5,3,2,1,1,1)$ & $(6,3,2,1,1)$ \\\hline
     \end{tabular}
   \end{center}
   \label{BoxPartitions}
   \caption{Partitions $\lambda$ of $13$ such that $HF_{I_{\lambda}}=\phi_{3}$.}\label{Dual3}
 \end{figure}

\subsection{Partition $\lambda(1)=(4,3,2^{3})\vdash 13$}\label{43222}

 The standard monomials of $\lambda(1)$ are  \[B_{(4,3,2^{3})}=\{1,y,y^{2},y^{3},z,yz,y^{2}z,z^{2},yz^{2},z^{3},yz^{3},z^{4},yz^{4}\}, ( \mbox{ see Figure }\ref{(43222)}).\]

\begin{figure}[h!]
    \begin{tikzpicture}[scale=0.65]
   \filldraw[draw=black!10,fill=blue!15]plot[circle]
    coordinates{(0,0)(4,0)(4,1)(3,1)(3,2)(2,2)(2,5)(0,5)(0,0)};
    \filldraw[dashed][draw=yellow!15,fill=yellow!30]plot[circle]
   coordinates{(4,0)(4,1)(3,1)(3,2)(2,2)(2,5)(0,5)(0,6)(5,6)(5,0)};
   \draw[step=1,color=gray!40] (0,0) grid (5,6);
   \foreach \x in {0,1,...,5}
   \draw (\x cm, 1pt)--(\x cm,-1pt) node[anchor=north]{$\x$};
   \foreach \y in {1,2,...,6}
   \draw (1pt,\y cm)--(-1pt,\y cm) node[anchor=east]{$\y$};
     \draw (0.5,0.5) node{\small$1$};
     \draw (1.5,0.5) node{\small$y$};
     \draw (2.5,0.5) node{\small$y^{2}$};
     \draw (3.5,0.5) node{\small$y^{3}$};
     \draw (4.5,0.5) node{\small$y^{4}$};
     \draw (0.5,1.5) node{\small$z$};
     \draw (1.5,1.5) node{\small$yz$};
     \draw (2.5,1.5) node{\small$y^{2}z$};
     \draw (3.5,1.5) node{\small$y^{3}z$};
     \draw (0.5,2.5) node{\small$z^{2}$};
     \draw (1.5,2.5) node{\small$yz^{2}$};
     \draw (2.5,2.5) node{\small$y^{2}z^{2}$};
     \draw (0.5,3.5) node{\small$z^{3}$};
     \draw (1.5,3.5) node{\small$yz^{3}$};
     \draw (0.5,4.5) node{\small$z^{4}$};
     \draw (1.5,4.5) node{\small$yz^{4}$};
     \draw (0.5,5.5) node{\small$z^{5}$};
     \draw (1.5,5.5) node{\small$yz^{5}$};
 \end{tikzpicture}
 \caption{ Standard monomials of the partition $(4,3,2^{3})\vdash 13$.}\label{(43222)}
\end{figure}

If we consider the homogenization of the ideal $I(\lambda(1))$, then there are two ways to choose the polynomials $A,B,C$: The first choose is when we have the lead terms \[LT_{1,1}=\{y^{4},-y^{2}z ^{2},y^{3}z\},\] and the second one is when we have the lead terms \[LT_{1,2}=\{ y^{2}z^{2},-y^{3}z,y^{4}\}.\]

Thus, by Lemma (\ref{CondicionCoeficientes}), the $1$-form $\omega_{\lambda(1)}=Adx+Bdy+Cdz$ satisfies the Euler's Condition if only if:
\begin{enumerate}
\item Case $LT_{1,1}$. \begin{align*}
          A=& y^{4}-C_{00}^{22}x^{3}y-C_{10}^{22}x^{2}y^{2}-C_{20}^{22}xy^{3}-C_{00}^{31}x^{3}z-(C_{01}^{22}+C_{10}^{31})x^{2}yz-(C_{11}^{22}+C_{20}^{31})xy^{2}z\\
          &-C_{01}^{31}x^{2}z^{2}-(C_{02}^{22}+C_{11}^{31})xyz^{2}-C_{02}^{31}xz^{3}-(C_{03}^{22}+C_{12}^{31})yz^{3}-C_{03}^{31}z^{4},\\
           B=&-y^{2}z^{2}+C_{00}^{22}x^{4}+C_{10}^{22}x^{3}y+C_{20}^{22}x^{2}y^{2}-xy^{3}+C_{01}^{22}x^{3}z+C_{11}^{22}x^{2}yz-C_{30}^{31}xy^{2}z+\\
          &C_{02}^{22}x^{2}z^{2}-C_{21}^{31}xyz^{2}+C_{03}^{22}xz^{3}-C_{13}^{31}z^{4},\\
C=&y^{3}z+C_{00}^{31}x^{4}+C_{10}^{31}x^{3}y+C_{20}^{31}x^{2}y^{2}+C_{30}^{31}xy^{3}+C_{01}^{31}x^{3}z+C_{11}^{31}x^{2}yz+C_{21}^{31}xy^{2}z+\\
    &C_{02}^{31}x^{2}z^{2}+C_{12}^{31}xyz^{2}+C_{03}^{31}xz^{3}+C_{13}^{31}yz^{3}.
          \end{align*}
  \item Case $LT_{1,2}$.
          \begin{align*}
          A'=& y^{2}z^{2}-C_{00}^{31}x^{3}y-C_{10}^{31}x^{2}y^{2}-C_{20}^{31}xy^{3}-C_{00}^{40}x^{3}z-(C_{01}^{31}+C_{10}^{40})x^{2}yz-(C_{11}^{31}+C_{20}^{40})xy^{2}z-\\
          &C_{01}^{40}x^{2}z^{2}-(C_{02}^{31}+C_{11}^{40})xyz^{2}-C_{02}^{40}xz^{3}-(C_{03}^{31}+C_{12}^{40})yz^{3}-C_{03}^{40}z^{4},\\
          B'=&-y^{3}z+C_{00}^{31}x^{4}+C_{10}^{31}x^{3}y+C_{20}^{31}x^{2}y^{2}+C_{01}^{31}x^{3}z+C_{11}^{31}x^{2}yz-C_{30}^{40}xy^{2}z+\\          &C_{02}^{31}x^{2}z^{2}-(C_{21}^{40}+1)xyz^{2}+C_{03}^{31}xz^{3}-C_{13}^{40}z^{4},\\
          C'=&y^{4} +C_{00}^{40}x^{4}+C_{10}^{40}x^{3}y+C_{20}^{40}x^{2}y^{2}+C_{30}^{40}xy^{3}+C_{01}^{40}x^{3}z+C_{11}^{40}x^{2}yz+C_{21}^{40}xy^{2}z+\\
          &C_{02}^{40}x^{2}z^{2}+C_{12}^{40}xyz^{2}+C_{03}^{40}xz^{3}+C_{13}^{40}yz^{3}.\\
          \end{align*}
\end{enumerate}

If $[1:0:0]\in V(A,B,C)$, then $C_{00}^{22}=C_{00}^{31}=0$. Moreover, $[0:1:0]\notin V(A,B,C)$, and $[0:0:1]\in V(A,B,C)$ if and only of $C_{03}^{31}=C_{13}^{31}=0$. If we assume that $V(A,B,C)=\{[1:0:0]\}$, then $(C_{03}^{31},C_{13}^{31})\neq(0,0).$

Analogously, if $\{[1:0:0]\}=V(A',B',C')$ then $C_{00}^{31}=C_{00}^{40}=0$ and $(C_{03}^{40},C_{13}^{40})\neq(0,0)$.

\vskip2mm

\noindent In the local chart $x=1$, we consider the following system of equations, which are the local representation of $\omega_{\lambda(1)}=Adx+Bdy+Cdz$ and $\omega'_{\lambda(1)}=A'dx+B'dy+C'dz$ respectively.

\begin{align*}
    f(1)=&-y^{2}z^{2}+C_{10}^{22}y+C_{20}^{22}y^{2}-y^{3}+C_{01}^{22}z+C_{11}^{22}yz-C_{30}^{31}y^{2}z+C_{02}^{22}z^{2}-C_{21}^{31}yz^{2}+C_{03}^{22}z^{3}-C_{13}^{31}z^{4},\\
g(1)=&y^{3}z+C_{10}^{31}y+C_{20}^{31}y^{2}+C_{30}^{31}y^{3}+C_{01}^{31}z+C_{11}^{31}yz+C_{21}^{31}y^{2}z+C_{02}^{31}z^{2}+C_{12}^{31}yz^{2}+C_{03}^{31}z^{3}+C_{13}^{31}yz^{3}.
\end{align*}

\begin{align*}
 f(2)=& - y^{3}z+C_{10}^{31}y+C_{20}^{31}y^{2}+C_{01}^{31}z+C_{11}^{31}yz-C_{30}^{40}y^{2}z+C_{02}^{31}z^{2}-(C_{21}^{40}+1)yz^{2}+C_{03}^{31}z^{3}-C_{13}^{40}z^{4},\\
g(2)=&y^{4}+C_{10}^{40}y+C_{20}^{40}y^{2}+C_{30}^{40}y^{3}+C_{01}^{40}z+C_{11}^{40}yz+C_{21}^{40}y^{2}z+C_{02}^{40}z^{2}+C_{12}^{40}yz^{2}+C_{03}^{40}z^{3}+C_{13}^{40}yz^{3}.\\
\end{align*}




\subsection{Partition $\lambda(2)=(4^{3},1)\vdash 13$}\label{4441}
In this case,  the standard monomials are \[B_{\lambda(2)}=\{1,y,y^{2},y^{3},z,yz,y^{2}z,y^{3}z,z^{2},yz^{2},y^{2}z^{2},z^{3}\}, (\mbox{ see Figure } \ref{(4441)}).\]
\begin{figure}[h!]
    \centering 
 \begin{tikzpicture}[scale=0.65]
   \filldraw[draw=black!10,fill=blue!15]plot[circle]
    coordinates{(0,0)(4,0)(4,3)(1,3)(1,4)(0,4)(0,0)};
    \filldraw[dashed][draw=yellow!15,fill=yellow!30]plot[circle]
   coordinates{(4,0)(4,3)(1,3)(1,4)(0,4)(0,6)(5,6)(5,0)(4,0)};
   \draw[step=1,color=gray!40] (0,0) grid (5,6);
   \foreach \x in {0,1,...,5}
   \draw (\x cm, 1pt)--(\x cm,-1pt) node[anchor=north]{$\x$};
   \foreach \y in {1,2,...,6}
   \draw (1pt,\y cm)--(-1pt,\y cm) node[anchor=east]{$\y$};
   \draw (0.5,0.5) node{\small$1$};
     \draw (1.5,0.5) node{\small$y$};
     \draw (2.5,0.5) node{\small$y^{2}$};
     \draw (3.5,0.5) node{\small$y^{3}$};
     \draw (4.5,0.5) node{\small$y^{4}$};
     \draw (0.5,1.5) node{\small$z$};
     \draw (1.5,1.5) node{\small$yz$};
     \draw (2.5,1.5) node{\small$y^{2}z$};
     \draw (3.5,1.5) node{\small$y^{3}z$};
     \draw (0.5,2.5) node{\small$z^{2}$};
     \draw (1.5,2.5) node{\small$yz^{2}$};
     \draw (2.5,2.5) node{\small$y^{2}z^{2}$};
     \draw (3.5,2.5) node{\small$y^{3}z^{2}$};
     \draw (0.5,3.5) node{\small$z^{3}$};
     \draw (1.5,3.5) node{\small$yz^{3}$};
     \draw (0.5,4.5) node{\small$z^{4}$};
   \end{tikzpicture}
   \caption{Standard monomials of the partition $(4^{3},1)\vdash 13$.}\label{(4441)}
\end{figure}

For this partition, there is only one way to choose the polynomials $(A,B,C)$ whose leading terms are $LT_{2}=\{y^{4},-z^{4},yz^{3}\}$ respectively.

\begin{align*}
A=&y^{4}-C_{00}^{04}x^{3}y-C_{10}^{04}x^{2}y^{2}-C_{20}^{04}xy^{3}-C_{00}^{13}x^{3}z-(C_{01}^{04}+C_{10}^{13})x^{2}yz-(C_{11}^{04}+C_{20}^{13})xy^{2}z\\ &-(C_{21}^{04}+C_{30}^{13})y^{3}z-C_{01}^{13}x^{2}z^{2}-(C_{02}^{04}+C_{11}^{13})xyz^{2}-(C_{12}^{04}+C_{21}^{13})y^{2}z^{2}-C_{02}^{13}xz^{3},\\
   B=&-z^{4}+C_{00}^{04}x^{4}+C_{10}^{04}x^{3}y+C_{20}^{04}x^{2}y^{2}-xy^{3}+C_{01}^{04}x^{3}z+C_{11}^{04}x^{2}yz+C_{21}^{04}xy^{2}z+\\
   &C_{02}^{04}x^{2}z^{2}+C_{12}^{04}xyz^{2}-C_{31}^{13}y^{2}z^{2}-C_{12}^{13}xz^{3},\\
   C=& yz^{3} + C_{00}^{13}x^{4}+C_{10}^{13}x^{3}y+C_{20}^{13}x^{2}y^{2}+C_{30}^{13}xy^{3}+C_{01}^{13}x^{3}z+C_{11}^{13}x^{2}yz+C_{21}^{13}xy^{2}z+C_{31}^{13}y^{3}z+\\
   &C_{02}^{13}x^{2}z^{2}+C_{12}^{13}xyz^{2}.
\end{align*}

If $[1:0:0]\in V(A,B,C)$, then $C_{00}^{04}=C_{00}^{13}=0$. A  local representation of $\omega_{\lambda(2)}=Adx+Bdy+Cdz$ in the chart $x=1$ is given by: 

\begin{align*}
    f=&-z^{4}+C_{10}^{04}y+C_{20}^{04}y^{2}-xy^{3}+C_{01}^{04}z+C_{11}^{04}yz+C_{21}^{04}y^{2}z+C_{02}^{04}z^{2}+C_{12}^{04}yz^{2}-C_{31}^{13}y^{2}z^{2}-C_{12}^{13}z^{3},\\
   g=& yz^{3} +C_{10}^{13}y+C_{20}^{13}y^{2}+C_{30}^{13}y^{3}+C_{01}^{13}z+C_{11}^{13}x^{2}yz+C_{21}^{13}y^{2}z+C_{31}^{13}y^{3}z+C_{02}^{13}z^{2}+C_{12}^{13}yz^{2}.
   \end{align*}

\subsection{Partition $\lambda(3)=(4,3,2^{2},1^{2})\vdash 13$}\label{432211} 
The set standard monomials of this partition $\lambda(3)$ is 
\[B_{\lambda(3)}=\{1,y,y^{2},y^{3},z,yz,y^{2}z,z^{2},yz^{2},z^{3},yz^{3},z^{4},z^{5}\}, (\mbox{ see Figure } \ref{(432211)}).\]
\begin{figure}[h!]
\centering
\begin{tikzpicture}[scale=0.65]
   \filldraw[draw=black!10,fill=blue!15]plot[circle]
    coordinates{(0,0)(4,0)(4,1)(3,1)(3,2)(2,2)(2,4)(1,4)(1,6)(0,6)(0,0)};
    \filldraw[dashed][draw=yellow!15,fill=yellow!30]plot[circle]
   coordinates{(4,0)(4,1)(3,1)(3,2)(2,2)(2,4)(1,4)(1,6)(0,6)(0,7)(5,7)(5,0)(4,0)};
   \draw[step=1,color=gray!40] (0,0) grid (5,7);
   \foreach \x in {0,1,...,5}
   \draw (\x cm, 1pt)--(\x cm,-1pt) node[anchor=north]{$\x$};
   \foreach \y in {1,2,...,6}
   \draw (1pt,\y cm)--(-1pt,\y cm) node[anchor=east]{$\y$};
   \draw (0.5,0.5) node{\small$1$};
     \draw (1.5,0.5) node{\small$y$};
     \draw (2.5,0.5) node{\small$y^{2}$};
     \draw (3.5,0.5) node{\small$y^{3}$};
     \draw (4.5,0.5) node{\small$y^{4}$};
     \draw (0.5,1.5) node{\small$z$};
     \draw (1.5,1.5) node{\small$yz$};
     \draw (2.5,1.5) node{\small$y^{2}z$};
     \draw (3.5,1.5) node{\small$y^{3}z$};
     \draw (0.5,2.5) node{\small$z^{2}$};
     \draw (1.5,2.5) node{\small$yz^{2}$};
     \draw (2.5,2.5) node{\small$y^{2}z^{2}$};
     \draw (0.5,3.5) node{\small$z^{3}$};
     \draw (1.5,3.5) node{\small$yz^{3}$};
     \draw (2.5,3.5) node{\small$y^{2}z^{3}$};
     \draw (0.5,4.5) node{\small$z^{4}$};
     \draw (0.5,5.5) node{\small$z^{5}$};
     \draw (0.5,6.5) node{\small$z^{6}$};
 \end{tikzpicture}
 \caption{Standard monomials of the partition $(4,3,2^{2},1^{2})\vdash 13$.}\label{(432211)}
 \end{figure}
 
 In this case, there are two ways to choose the polynomials $A,B,C\in\C[x,y,z]_{4}$ such that $xA+yB+zC=0$, with set leading terms given by \[LT_{3,1}=\{y^{4},-y^{2}z^{2},y^{3}z\} \mbox{ or } LT_{3,2}=\{y^{2}z^{2},-y^{3}z,y^{4}\}.\]

 \begin{enumerate}
    \item \label{X3} Case $LT_{3,1}$. \begin{align*}
        A=& y^{4}-C_{00}^{22}x^{3}y-C_{10}^{22}x^{2}y^{2}-C_{20}^{22}xy^{3}-C_{00}^{31}x^{3}z-(C_{01}^{22}+C_{10}^{31})x^{2}yz-\\
     &(C_{11}^{22}+C_{20}^{31})xy^{2}z-C_{01}^{31}x^{2}z^{2}-(C_{02}^{22}+C_{11}^{31})xyz^{2}-C_{02}^{31}xz^{3}-(C_{03}^{22}+C_{12}^{31})yz^{3}-C_{03}^{31}z^{4},\\
     B= & -y^{2}z^{2}+C_{00}^{22}x^{4}+C_{10}^{22}x^{3}y+C_{20}^{22}x^{2}y^{2}-xy^{3}+C_{01}^{22}x^{3}z+C_{11}^{22}x^{2}yz-\\
     &C_{30}^{31}xy^{2}z+C_{02}^{22}x^{2}z^{2}-C_{21}^{31}xyz^{2}+C_{03}^{22}xz^{3}-C_{13}^{31}z^{4},\\
     C= & y^{3}z+C_{00}^{31}x^{4}+C_{10}^{31}x^{3}y+C_{20}^{31}x^{2}y^{2}+C_{30}^{31}xy^{3}+C_{01}^{31}x^{3}z+C_{11}^{31}x^{2}yz+C_{21}^{31}xy^{2}z+\\
     & C_{02}^{31}x^{2}z^{2}+C_{12}^{31}xyz^{2}+C_{03}^{31}xz^{3}+C_{13}^{31}yz^{3}.
    \end{align*}
    \item\label{X3.1} Case $LT_{3,2}$.  \begin{align*}
    A'= & y^{2}z^{2}-C_{00}^{31}x^{3}y-C_{10}^{31}x^{2}y^{2}-C_{20}^{31}xy^{3}-C_{00}^{40}x^{3}z-(C_{01}^{31}+C_{10}^{40})x^{2}yz-\\
     &(C_{11}^{31}+C_{20}^{40})xy^{2}z-C_{01}^{40}x^{2}z^{2}-(C_{02}^{31}+C_{11}^{40})xyz^{2}-C_{02}^{40}xz^{3}-(C_{03}^{31}+C_{12}^{40})yz^{3}-C_{03}^{40}z^{4},\\
    B'= &-y^{3}z+C_{00}^{31}x^{4}+C_{10}^{31}x^{3}y+C_{20}^{31}x^{2}y^{2}+C_{01}^{31}x^{3}z+C_{11}^{31}x^{2}yz-C_{03}^{40}xy^{2}z+\\
     & C_{02}^{31}x^{2}z^{2}-(C_{21}^{40}+1)xyz^{2}+C_{03}^{31}xz^{3}-C_{13}^{40}z^{4},\\
     C'=& y^{4}+C_{00}^{40}x^{4}+C_{10}^{40}x^{3}y+C_{20}^{40}x^{2}y^{2}+C_{30}^{40}xy^{3}+C_{01}^{40}x
     ^{3}z+C_{11}^{40}x^{2}yz+\\
     &C_{21}^{40}xy^{2}z+C_{02}^{40}x^{2}z^{2}+C_{12}^{40}xyz^{2}+C_{03}^{40}xz^{3}+C_{13}^{40}yz^{3}.
  \end{align*}
\end{enumerate}

We observe that the conditions given by this system of polynomial equations are the same as those for the system corresponding to the partition $\lambda(1)=(4,3,2^{2})$. Thus,  is enough to consider only the conditions for the Gr\"obner Basis of the ideals $I\in U_{\lambda(3)}$, however,  the computation of the Gr\"obner basis will not be addressed in this article. 
     
\subsection{Partition $\lambda(4)=(4,3^{2},1^{3})\vdash 13$}\label{433111}
The standard monomials of this partition is the set 
\[B_{\lambda(4)}=\{1,y,y^{2},y^{3},z,yz,y^{2}z,z^{2},yz^{2},y^{2}z^{2},z^{3},z^{4},z^{5}\},  (\mbox{ see Figure }\ref{(433111)}).\]
\begin{figure}[h!]
\centering
  \begin{tikzpicture}[scale=0.65]
   \filldraw[draw=black!10,fill=blue!15]plot[circle]
    coordinates{(0,0)(4,0)(4,1)(3,1)(3,3)(1,3)(1,6)(0,6)(0,0)};
    \filldraw[dashed][draw=yellow!15,fill=yellow!30]plot[circle]
   coordinates{(4,0)(4,1)(3,1)(3,3)(1,3)(1,6)(0,6)(0,7)(5,7)(5,0)(4,0)};
   \draw[step=1,color=gray!40] (0,0) grid (5,7);
   \foreach \x in {0,1,...,5}
   \draw (\x cm, 1pt)--(\x cm,-1pt) node[anchor=north]{$\x$};
   \foreach \y in {1,2,...,6}
   \draw (1pt,\y cm)--(-1pt,\y cm) node[anchor=east]{$\y$};
   \fill[red!90](0,4) circle (3pt) node[]{};
   \fill[red!90](0,5) circle (3pt) node[]{};
    \fill[red!90](2,2) circle (3pt) node[]{};
     \draw (0.5,0.5) node{\small$1$};
     \draw (1.5,0.5) node{\small$y$};
     \draw (2.5,0.5) node{\small$y^{2}$};
     \draw (3.5,0.5) node{\small$y^{3}$};
     \draw (4.5,0.5) node{\small$y^{4}$};
     \draw (0.5,1.5) node{\small$z$};
     \draw (1.5,1.5) node{\small$yz$};
     \draw (2.5,1.5) node{\small$y^{2}z$};
     \draw (3.5,1.5) node{\small$y^{3}z$};
     \draw (0.5,2.5) node{\small$z^{2}$};
     \draw (1.5,2.5) node{\small$yz^{2}$};
     \draw (2.5,2.5) node{\small$y^{2}z^{2}$};
     \draw (0.5,3.5) node{\small$z^{3}$};
     \draw (1.5,3.5) node{\small$yz^{3}$};
     \draw (0.5,4.5) node{\small$z^{4}$};
     \draw (0.5,5.5) node{\small$z^{5}$};
     \draw (0.5,6.5) node{\small$z^{6}$};
 \end{tikzpicture}
 \caption{Standard monomial of the partition $(4,3^{2},1^{3})\vdash 13$.}\label{(433111)}
 \end{figure}
 
 In this case, there is only one way to choose the polynomials $(A,B,C)$  whose  leading terms are \[LT_{4}=\{yz^{3},-y^{3}z,y^{4}\}.\] 

Then, we have the $1$-form $\omega_{\lambda(4)}=Adx+Bdy+Cdz$,  where
\begin{align*}
A=&yz^{3}-C_{00}^{31}x^{3}y-C_{10}^{31}x^{2}y^{2}-C_{20}^{31}xy^{3}-C_{00}^{40}x^{3}z-(C_{10}^{40}+C_{01}^{31})x^{2}yz-(C_{20}^{40}+C_{11}^{31})xy^{2}z\\&-C_{01}^{40}x^{2}z^{2}-(C_{11}^{40}+C_{02}^{31})xyz^{2}-(C_{21}^{40}+C_{12}^{31})y^{2}z^{2}-C_{02}^{40}xz^{3}-C_{03}^{40}z^{4},\\
      B=&-y^{3}z+C_{00}^{31}x^{4}+C_{10}^{31}x^{3}y+ C_{20}^{31}x^{2}y^{2}
      +C_{01}^{31}x^{3}z+C_{11}^{31}x^{2}yz-C_{30}^{40}xy^{2}z+\\
     & C_{02}^{31}x^{2}z^{2}+C_{12}^{31}xyz^{2}-(C_{12}^{40}+1)xz^{3},\\
 C=& y^{4} +C_{00}^{40}x^{4}+C_{10}^{40}x^{3}y+C_{20}^{40}x^{2}y^{2}+C_{30}^{40}xy^{3}+C_{01}^{40}x^{3}z+C_{11}^{40}x^{2}yz+C_{21}^{40}xy^{2}z+\\
     & C_{02}^{40}x^{2}z^{2}+C_{12}^{40}xyz^{2}+C_{03}^{40}xz^{3}.\\   
 \end{align*}
 

If $[1:0:0]\in V(A,B,C)$ and $[0:1:0],[0:0:1]\notin V(A,B,C)$ then $C_{00}^{40}=C_{00}^{31}=0$, and $C_{03}^{40}\neq 0$. Then a local representation of the $1$-form $\omega_{\lambda(4)}=Adx+Bdy+Cdz$ on the open set $U_{x}\subset \P^{2}$ is given by $fdy+gdz$ where $C_{03}^{40}\neq 0$ and 
\begin{align*}
   f=&-y^{3}z+C_{10}^{31}y+ C_{20}^{31}y^{2}+C_{01}^{31}z+C_{11}^{31}yz-C_{30}^{40}y^{2}z+C_{02}^{31}z^{2}+C_{12}^{31}yz^{2}-(C_{12}^{40}+1)z^{3},\\
 g=& y^{4} +C_{10}^{40}y+C_{20}^{40}y^{2}+C_{30}^{40}y^{3}+C_{01}^{40}z+C_{11}^{40}yz+C_{21}^{40}y^{2}z+C_{02}^{40}z^{2}+C_{12}^{40}yz^{2}+C_{03}^{40}z^{3}.
 \end{align*}

\subsection{Partition $\lambda(5)=(4^{2},2,1^{3})\vdash 13$}\label{442111}
The  set of standard monomials of $ \lambda(5)$ is 
\[B_{\lambda(5)}=\{1,y,y^{2},y^{3},z,yz,y^{2}z,y^{3}z,z^{2},yz^{2},z^{3},z^{4}\}, (\mbox{ see Figure }\ref{(442111)}).\]

\begin{figure}[!h]
\centering
  \begin{tikzpicture}[scale=0.7]
   \filldraw[draw=black!10,fill=blue!15]plot[circle]
    coordinates{(0,0)(4,0)(4,2)(2,2)(2,3)(1,3)(1,6)(0,6)(0,0)};
    \filldraw[dashed][draw=yellow!15,fill=yellow!30]plot[circle]
   coordinates{(4,0)(4,2)(2,2)(2,3)(1,3)(1,6)(0,6)(0,7)(5,7)(5,0)(4,0)};
   \draw[step=1,color=gray!40] (0,0) grid (5,7);
   \foreach \x in {0,1,...,5}
   \draw (\x cm, 1pt)--(\x cm,-1pt) node[anchor=north]{$\x$};
   \foreach \y in {1,2,...,6}
   \draw (1pt,\y cm)--(-1pt,\y cm) node[anchor=east]{$\y$};
   \draw (0.5,0.5) node{\small$1$};
     \draw (1.5,0.5) node{\small$y$};
     \draw (2.5,0.5) node{\small$y^{2}$};
     \draw (3.5,0.5) node{\small$y^{3}$};
     \draw (4.5,0.5) node{\small$y^{4}$};
     \draw (0.5,1.5) node{\small$z$};
     \draw (1.5,1.5) node{\small$yz$};
     \draw (2.5,1.5) node{\small$y^{2}z$};
     \draw (3.5,1.5) node{\small$y^{3}z$};
     \draw (0.5,2.5) node{\small$z^{2}$};
     \draw (1.5,2.5) node{\small$yz^{2}$};
     \draw (2.5,2.5) node{\small$y^{2}z^{2}$};
     \draw (0.5,3.5) node{\small$z^{3}$};
     \draw (1.5,3.5) node{\small$yz^{3}$};
     \draw (0.5,4.5) node{\small$z^{4}$};
     \draw (0.5,5.5) node{\small$z^{5}$};
     \draw (0.5,6.5) node{\small$z^{6}$};
 \end{tikzpicture}
 \caption{ Standard monomials of partition $\lambda(5)=(4^{2},2,1^{3})\vdash 13$.}\label{(442111)}
 \end{figure}

Then, there is an one way to choose the polynomials $(A,B,C)$ with leading terms $LT_{5}=\{y^{4},-yz^{3},y^{2}z^{2}\}$ respectively: 

\begin{align*}
    A=& y^{4}-C_{00}^{13}x^{3}y-C_{10}^{13}x^{2}y^{2}-C_{20}^{13}xy^{3}-C_{00}^{22}x^{3}z-(C_{01}^{13}+C_{10}^{22})x^{2}yz-(C_{11}^{13}+C_{20}^{22})xy^{2}z\\&-(C_{21}^{13}+C_{30}^{22})y^{3}z-
    C_{01}^{22}x^{2}z^{2}-(C_{02}^{13}+C_{11}^{22})xyz^{2}-C_{02}^{22}xz^{3}-C_{03}^{22}z^{4},\\
    B=& -yz^{3}+C_{00}^{13}x^{4}+C_{10}^{13}x^{3}y+C_{20}^{13}x^{2}y^{2}-xy^{3}+C_{01}^{13}x^{3}z+C_{11}^{13}x^{2}yz+C_{21}^{13}xy^{2}z+\\
    & C_{02}^{13}x^{2}z^{2}-C_{21}^{22}xyz^{2}-C_{12}^{22}xz^{3},\\
    C=& y^{2}z^{2}+C_{00}^{22}x^{4}+C_{10}^{22}x^{3}y+C_{20}^{22}x^{2}y^{2}+C_{30}^{22}xy^{3}+C_{01}^{22}x^{3}z+C_{11}^{22}x^{2}yz+C_{21}^{22}xy^{2}z+\\
    & C_{02}^{22}x^{2}z^{2}+C_{12}^{22}xyz^{2}+C_{03}^{22}xz^{3}.\\  
\end{align*}

If we assume that $[1:0:0]\in V(A,B,C)$, then $C_{00}^{13}=C_{00}^{22}=0$, We can see that $[0:1:0]\notin V(A,B,C)$. Moreover, the point $[0:0:1]\notin V(A,B,C)$ if $C_{03}^{22}\neq 0$. Thus, the local representation of $\omega_{\lambda(5)}=Adx+Bdy+Cdz$ in the chart $x=1$ is $fdy+gdz$, where

\begin{align*}
     f=& -yz^{3}+C_{10}^{13}y+C_{20}^{13}y^{2}-y^{3}+C_{01}^{13}z+C_{11}^{13}yz+C_{21}^{13}y^{2}z+ C_{02}^{13}z^{2}-C_{21}^{22}yz^{2}-C_{12}^{22}z^{3},\\
    g=& y^{2}z^{2}+C_{10}^{22}y+C_{20}^{22}y^{2}+C_{30}^{22}y^{3}+C_{01}^{22}z+C_{11}^{22}yz+C_{21}^{22}y^{2}z+ C_{02}^{22}z^{2}+C_{12}^{22}yz^{2}+C_{03}^{22} z^{3}.\\  
\end{align*}

\subsection{Partition $\lambda(6)=(5,3,2,1^{3})\vdash 13$}\label{532111}
The set  of standard monomials for the partition $\lambda(6)$ is \[B_{\lambda(6)}=\{1,y,y^{2},y^{3},y^{4},z,yz,y^{2}z,z^{2},yz^{2},z^{3},z^{4},z^{5}\}, (\mbox{ see Figure }\ref{(532111)}).\]

\begin{figure}[!h]
\centering
  \begin{tikzpicture}[scale=0.7]
   \filldraw[draw=black!10,fill=blue!15]plot[circle]
    coordinates{(0,0)(5,0)(5,1)(3,1)(3,2)(2,2)(2,3)(1,3)(1,6)(0,6)(0,0)};
    \filldraw[dashed][draw=yellow!15,fill=yellow!30]plot[circle]
   coordinates{(5,0)(5,1)(3,1)(3,2)(2,2)(2,3)(1,3)(1,6)(0,6)(0,7)(6,7)(6,0)(5,0)};
   \draw[step=1,color=gray!40] (0,0) grid (6,7);
   \foreach \x in {0,1,...,6}
   \draw (\x cm, 1pt)--(\x cm,-1pt) node[anchor=north]{$\x$};
   \foreach \y in {1,2,...,6}
   \draw (1pt,\y cm)--(-1pt,\y cm) node[anchor=east]{$\y$};
   \draw (0.5,0.5) node{\small$1$};
     \draw (1.5,0.5) node{\small$y$};
     \draw (2.5,0.5) node{\small$y^{2}$};
     \draw (3.5,0.5) node{\small$y^{3}$};
     \draw (4.5,0.5) node{\small$y^{4}$};
     \draw (5.5,0.5) node{\small$y^{5}$};
     \draw (0.5,1.5) node{\small$z$};
     \draw (1.5,1.5) node{\small$yz$};
     \draw (2.5,1.5) node{\small$y^{2}z$};
     \draw (3.5,1.5) node{\small$y^{3}z$};
     \draw (0.5,2.5) node{\small$z^{2}$};
     \draw (1.5,2.5) node{\small$yz^{2}$};
     \draw (2.5,2.5) node{\small$y^{2}z^{2}$};
     \draw (0.5,3.5) node{\small$z^{3}$};
     \draw (1.5,3.5) node{\small$yz^{3}$};
     \draw (0.5,4.5) node{\small$z^{4}$};
     \draw (0.5,5.5) node{\small$z^{5}$};
     \draw (0.5,6.5) node{\small$z^{6}$};
 \end{tikzpicture}
 \caption{ Standard monomials of partition $\lambda(6)=(5,3,2,1^{3})\vdash 13$.}\label{(532111)}
 \end{figure}

Then, we have two way for to choose the polynomials $A,B,C\in\C[x,y,z]_{4}$ satisfy the Euler's Conditions for the $1$-form $\omega_{\lambda(6)}=Adx+Bdy+Cdz$, when the set of  lead terms is \[LT_{6,1}=\{ yz^{3},-y^{2}z^{2},y^{3}z\}  \mbox{ or } LT_{6,2}=\{y^{3}z,-yz^{3},y^{2}z^{2}\}.\]

\begin{align*}
\mbox{ For } LT_{6,1}:\\
 A= &  -C_{00}^{22}x^{3}y-C_{10}^{22}x^{2}y^{2}-C_{20}^{22}x\,y^{3}-C_{30}^{22}y^{4
      }-C_{00}^{31}x^{3}z+\left(-C_{01}^{22}-C_{10}^{31}\right)x^{2}y\,z+\\& \left(-C_{11}^{22}-C_{20}^{31}\right)x\,y^{2}z+
      yz^{3}-C_{01}^{31}x^{2}z^{2}+\left(-C_{02}^{22}-C_{11}^{31}\right)x\,y\,z^{2}-C_{02}^{31}x\,z^{3}-C_{03}^{31}z^{4},\\
B=&  C_{00}^{22}x^{4}+C_{10}^{22}x^{3}y+C_{20}^{22}x^{2}y^{2}+C_{30}^{22}x\,y^{3
      }+C_{01}^{22}x^{3}z+C_{11}^{22}x^{2}y\,z-C_{30}^{31}x\,
      y^{2}z+\\& C_{02}^{22}x^{2}z^{2}-C_{21}^{31}x\,y\,z^{2}-\left(C_{12}^{31}+1 \right)xz^{3}-y^{2}z^{2},\\
C=&  C_{00}^{31}x^{4}+C_{10}^{31}x^{3}y+C_{20}^{31}x^{2}y^{2}+C_{30}^{31}x\,y^{3
      }+C_{01}^{31}x^{3}z+C_{11}^{31}x^{2}y\,z+C_{21}^{31}x\,y^{2}z+\\
      &y^{3}z+C_{02}^{31}x^{2}z^{2}+ C_{12}^{31}x\,y\,z^{2}+C_{03}^{31}x\,z^{3}.   
\end{align*}

\begin{align*}
\mbox{ For } LT_{6,2}:\\
A'=&-C_{00}^{13}x^{3}y-C_{10}^{13}x^{2}y^{2}-C_{20}^{13}x\,y^{3}-C_{30}^{13}y^{4}-C_{00}^{22}x^{3}z+(-C_{01}^{13}-C_{10}^{22})x^{2}yz+\\ &(-C_{11}^{13}-C_{20}^{22})x\,y^{2}z+ y^{3}z-C_{01}^{22}x^{2}z^{2}+(-C_{02}^{13}-C_{11}^{22})x\,y\,z^{2}-C_{02}^{22}x\,z^{3}-C_{03}^{22}z^{4},\\
B'=& C_{00}^{13}x^{4}+C_{10}^{13}x^{3}y+C_{20}^{13}x^{2}y^{2}+C_{30}^{13}x\,y^{3}+C_{01}^{13}x^{3}z+C_{11}^{13}x^{2}y\,z+(-C_{30}^{22}-1)x\,y^{2}z+\\ &C_{02}^{13}x^{2}z^{2}-C_{21}^{22}x\,y\,z^{2}-C_{12}^{22}x\,z^{3}-y\,z^{3},\\
C'=& C_{00}^{22}x^{4}+C_{10}^{22}x^{3}y+C_{20}^{22}x^{2}y^{2}+C_{30}^{22}x\,y^{3}+C_{01}^{22}x^{3}z+C_{11}^{22}x^{2}y\,z+\\
&C_{21}^{22}x\,y^{2}z+C_{02}^{22}x^{2}z^{2}+C_{12}^{22}x\,y\,z^{2}+y^{2}z^{2}+C_{03}^{22}x\,z^{3}.
\end{align*}

If we assume that $[1:0:0]\in V(A,B,C)$ and $[0:1:0], [0:0:1]\notin V(A,B,C)$ then  $C_{00}^{31}=C_{00}^{22}=0$ and $C_{30}^{22}\neq 0$. Thus, the local representation around of $[1:0:0]$ is 
$fdy+gdz$  with $C_{30}^{22}\neq 0$, where  

\begin{align*}
f(1)=&  C_{10}^{22}y+C_{20}^{22}y^{2}+C_{30}^{22}\,y^{3
      }+C_{01}^{22}z+C_{11}^{22}y\,z-C_{30}^{31}\,
      y^{2}z+C_{02}^{22}z^{2}-C_{21}^{31}\,y\,z^{2}-\left(C_{12}^{31}+1 \right)z^{3}-y^{2}z^{2},\\
g(1)=& C_{10}^{31}y+C_{20}^{31}y^{2}+C_{30}^{31}\,y^{3
      }+C_{01}^{31}z+C_{11}^{31}y\,z+C_{21}^{31}\,y^{2}z+y^{3}z+C_{02}^{31}z^{2}+ C_{12}^{31}\,y\,z^{2}+C_{03}^{31}\,z^{3}. 
      \end{align*}

Analogously, if $[1:0:0]\in V(A',B',C')$, but $[0:1:0],[0:0:1]\notin V(A',B',C')$, then $C_{00}^{13}=C_{00}^{22}=0$, $C_{30}^{13}C_{03}^{22}\neq 0$. Thus, a local representation around of the point $[1:0:0]$ is given by 

\begin{align*}
f(2)=& C_{10}^{13}y+C_{20}^{13}y^{2}+C_{30}^{13}y^{3}+C_{01}^{13}z+C_{11}^{13}yz+(-C_{30}^{22}-1)y^{2}z+C_{02}^{13}z^{2}-C_{21}^{22}yz^{2}-C_{12}^{22}z^{3}-yz^{3},\\
g(2)=& C_{10}^{22}y+C_{20}^{22}y^{2}+C_{30}^{22}y^{3}+C_{01}^{22}z+C_{11}^{22}yz+C_{21}^{22}y^{2}z+C_{02}^{22}z^{2}+C_{12}^{22}yz^{2}+y^{2}z^{2}+C_{03}^{22}z^{3}.\\
\end{align*}

\subsection{Multiplicity $3$} \label{D3M3}
In this subsection we denote by $H_{i}$ an homogeneous polynomial of degree $i$ in $\C[y,z]$. Every local representation in the subsections (\ref{43222})-(\ref{532111}) are of the form $f=f_{1}+f_{2}+f_{3}+f_{4}$ and $g=g_{1}+g_{2}+g_{3}+g_{4}$.   

If the multiplicity of the singular point $[1:0:0]$ is three, then $f=f_{3}+f_{4}$ and $g=g_{3}+g_{4}$ whit  $(f_{3},g_{3})\neq (0,0)$. Thus, we have the following equation systems:  

\begin{itemize}
\item For the partition $\lambda(1)$ with $LT_{1,1}$:\\
\begin{align*}
    f(1)=&-y^{2}z^{2}-C_{13}^{31}z^{4}-y^{3}-C_{30}^{31}y^{2}z-C_{21}^{31}yz^{2}+C_{03}^{22}z^{3},\\
g(1)=&y^{3}z+C_{13}^{31}yz^{3}+C_{30}^{31}y^{3}+C_{21}^{31}y^{2}z+C_{12}^{31}yz^{2}+C_{03}^{31}z^{3}.
\end{align*}
with $(C_{03}^{31},C_{13}^{31})\neq (0,0)$.\\

For $LT_{1,2}$:
\begin{align*}
 f(2)=& - y^{3}z-C_{13}^{40}z^{4}-C_{30}^{40}y^{2}z-(C_{21}^{40}+1)yz^{2}+C_{03}^{31}z^{3},\\
g(2)=&y^{4}+C_{13}^{40}yz^{3}+C_{30}^{40}y^{3}+C_{21}^{40}y^{2}z+C_{12}^{40}yz^{2}+C_{03}^{40}z^{3}.\\
\end{align*}
with $(C_{03}^{40},C_{13}^{40})\neq (0,0)$.\\

\item For the partition $\lambda(2)$ with $LT_{2}$:
\begin{align*}
    f=&-z^{4}-C_{31}^{13}y^{2}z^{2}-y^{3}+C_{21}^{04}y^{2}z+C_{12}^{04}yz^{2}-C_{12}^{13}z^{3},\\
   g=& yz^{3}+C_{31}^{13}y^{3}z+C_{30}^{13}y^{3}+C_{21}^{13}y^{2}z+C_{12}^{13}yz^{2}.\\\end{align*}

\item  For the partition $\lambda(3)$ we have the same for $\lambda(1)$. 

\item For the partition $\lambda(4)$ and $LT_{4}$: 
\begin{align*}
 f=&-y^{3}z-C_{30}^{40}y^{2}z+C_{12}^{31}yz^{2}-(C_{12}^{40}+1)z^{3},\\
 g=& y^{4}+C_{30}^{40}y^{3}+C_{21}^{40}y^{2}z+C_{12}^{40}yz^{2}+C_{03}^{40}z^{3}.\\ 
     \end{align*}
with $C_{03}^{40}\neq 0$.\\    

\item  For the partition $\lambda(5)$ and $LT_{5}$:

\begin{align*}
     f=& -yz^{3}-y^{3}+C_{21}^{13}y^{2}z-C_{21}^{22}yz^{2}-C_{12}^{22}z^{3},\\
    g=& y^{2}z^{2}+C_{30}^{22}y^{3}+C_{21}^{22}y^{2}z+C_{12}^{22}yz^{2}+C_{03}^{22} z^{3}.\\  
\end{align*}
with $C_{03}^{22}\neq 0$.\\

\item For the partition $\lambda(6)$, $LT_{6,1}$:
 \begin{align*}
f(1)=& C_{30}^{22}\,y^{3}-C_{30}^{31}\,
      y^{2}z-C_{21}^{31}\,y\,z^{2}-\left(C_{12}^{31}+1 \right)z^{3}-y^{2}z^{2},\\
g(1)=& C_{30}^{31}\,y^{3
      }+C_{21}^{31}\,y^{2}z+y^{3}z+ C_{12}^{31}\,y\,z^{2}+C_{03}^{31}\,z^{3}. 
      \end{align*}
      
with $C_{30}^{22}\neq 0$.\\

For $LT_{6,2}$:

\begin{align*}
f(2)=& C_{30}^{13}y^{3}+(-C_{30}^{22}-1)y^{2}z-C_{21}^{22}yz^{2}-C_{12}^{22}z^{3}-yz^{3},\\
g(2)=& C_{30}^{22}y^{3}+C_{21}^{22}y^{2}z+C_{12}^{22}yz^{2}+y^{2}z^{2}+C_{03}^{22}z^{3}
\end{align*}
 with $C_{30}^{13}C_{03}^{22}\neq 0$.\\
\end{itemize}

\begin{thm}
Let $\omega_{\lambda(i)}=fdy+gdz$ be a local representation of the $1$-form that appears in the subsections from \ref{43222} to \ref{532111} with $f=f_{4}+f_{3}$ and $g=g_{4}$. For every $i\neq 4,5,6$, we can obtain conditions on the coefficients of $f$ and $g$ such that their intersection index at the origin $I_{0}(f,g)$ is $13$.\end{thm}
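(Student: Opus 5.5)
The plan is to exploit the fact that, under the hypothesis $g=g_{4}$, the second component of the local $1$-form is a binary quartic form. It therefore splits into linear factors, $g=\prod_{j=1}^{4}L_{j}$ with $L_{j}\in\C[y,z]_{1}$, and additivity of the intersection index gives
\[
I_{0}(f,g)=\sum_{j=1}^{4}I_{0}(f,L_{j}).
\]
For a line $L=\{az-by=0\}$ parametrised by $t\mapsto(bt,at)$, the index $I_{0}(f,L)$ is the order in $t$ of $f(bt,at)=t^{3}f_{3}(b,a)+t^{4}f_{4}(b,a)$. Hence $I_{0}(f,L)=3$ when $L\nmid f_{3}$, it equals $4$ when $L\mid f_{3}$ but $L\nmid f_{4}$, and it is $\infty$ otherwise. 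Since each factor contributes at least $3$, the value $13$ is reached exactly when, counting multiplicities, precisely one factor of $g$ divides $f_{3}$, that factor does not divide $f_{4}$, and no other factor divides $f_{3}$.

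The first step is to impose $g_{3}=0$ in each system of Subsection \ref{D3M3}, which kills all cubic coefficients of $g$. I would then compute $f_{3}$ and the factorisation of $g$, and choose the remaining coefficients so that the count $4+3+3+3$ holds.

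For $\lambda(1)$ with $LT_{1,1}$, imposing $g_{3}=0$ forces $C_{30}^{31}=C_{21}^{31}=C_{12}^{31}=C_{03}^{31}=0$. This gives $g=yz(y^{2}+C_{13}^{31}z^{2})$ with $C_{13}^{31}\neq0$, because $(C_{03}^{31},C_{13}^{31})\neq(0,0)$. It also gives $f_{3}=-y^{3}+C_{03}^{22}z^{3}$ and $f_{4}=-y^{2}z^{2}-C_{13}^{31}z^{4}$. Taking $C_{03}^{22}=0$ yields $I_{0}(f,y)=\mathrm{ord}_{z}(-C_{13}^{31}z^{4})=4$ and $I_{0}(f,z)=3$. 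For the two lines $y=\alpha z$ with $\alpha^{2}=-C_{13}^{31}$, we have $f_{3}(\alpha,1)=-\alpha^{3}\neq0$, so each contributes $3$. This gives the total $13$. Since the systems for $\lambda(3)$ coincide with those for $\lambda(1)$, the same conditions settle $\lambda(3)$.

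For $\lambda(2)$, imposing $g_{3}=0$ forces $C_{30}^{13}=C_{21}^{13}=C_{12}^{13}=0$. Then $g=yz(z^{2}+C_{31}^{13}y^{2})$, $f_{3}=y(-y^{2}+C_{21}^{04}yz+C_{12}^{04}z^{2})$ and $f_{4}=-z^{4}-C_{31}^{13}y^{2}z^{2}$. Here $y\mid f_{3}$ but $f(0,z)=-z^{4}$, so $I_{0}(f,y)=4$, while $I_{0}(f,z)=3$ because $f(y,0)=-y^{3}$. For the lines $z=\gamma y$ with $\gamma^{2}=-C_{31}^{13}$, the contribution is $3$ as soon as $-1+C_{21}^{04}\gamma+C_{12}^{04}\gamma^{2}\neq0$, which is an open condition. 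The degenerate case $C_{31}^{13}=0$ also works directly: then $g=yz^{3}$ and $I_{0}(f,g)=4+3\cdot3=13$.

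The main obstacle is bookkeeping rather than conceptual. I must check that every chosen condition is compatible with the standing hypotheses of each subsection: $[1:0:0]$ must be the only singular point, which requires $(C_{03}^{31},C_{13}^{31})\neq(0,0)$ and so on. I must also ensure that no factor of $g$ divides both $f_{3}$ and $f_{4}$, since that would make the index infinite.

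This danger is real for the other branch $LT_{1,2}$. There the natural choice $C_{03}^{31}=\beta$, where $\beta^{3}=-C_{13}^{40}$, makes the line $y=\beta z$ divide $f_{3}$ and $f_{4}$ simultaneously. So for $\lambda(1)$ and $\lambda(3)$ I would exhibit the conditions through $LT_{1,1}$, and treat $LT_{1,2}$ only as a remark.
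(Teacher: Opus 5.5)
Your proposal is correct and follows essentially the paper's route. With $g_{3}=0$ you factor $g$ as $yz(y^{2}+C_{13}^{31}z^{2})$ (resp.\ $yz(z^{2}+C_{31}^{13}y^{2})$), split $I_{0}(f,g)$ additively over the factors, and recover the paper's conditions: $C_{03}^{22}=0$ for $\lambda(1)$ (hence $\lambda(3)$), and $Res(f_{3},z^{2}+C_{31}^{13}y^{2})\neq 0$ for $\lambda(2)$. Your count along parametrised lines supplies the verification the paper leaves implicit. The one difference is that the paper also settles the $LT_{1,2}$ branch positively by taking $C_{03}^{31}=0$, which makes $y$ the only factor of $g$ dividing $f_{3}=-yz^{2}$ and gives $4+3\cdot 3=13$; that branch needs no caution.
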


\begin{proof}
\begin{itemize}
\item The partition $\lambda(1)$. Since $g_{3}=0$, then $C_{13}^{31}\neq 0$, thus, \\
\begin{align*} 
    f(1)=&-z^{2}(y^{2}+C_{13}^{31}z^{2})-y^{3}+C_{03}^{22}z^{3},\\
g(1)=&yz(y^{2}+C_{13}^{31}z^{2}).
\end{align*}
Then \[I_{0}(f(1),g(1))=I_{0}(f(1),y)+I_{0}(f(1),z)+I_{0}(f(1),y^{2}+C_{13}^{31}z^{2}).\]
Moreover, $I_{0}(f(1),g(1))=13$ if $C_{03}^{22}=0$.\\

For the second case of the partition $\lambda(1)$, since $g_{3}=0$, then $C_{13}^{40}\neq 0$. Thus, 
\begin{align*}
 f(2)=& -z(y^{3}+C_{13}^{40}z^{3})-yz^{2}+C_{03}^{31}z^{3},\\
g(2)=&y(y^{3}+C_{13}^{40}z^{3}).
\end{align*}
Then \[I_{0}(f(2),g(2))=I_{0}(f(2),y^{3}+C_{13}^{40}z^{3})+I_{0}(f(2),y).\]
We can see that $I_{0}(f(2),g(2))=13$ if $C_{03}^{31}=0$, on the other case is $12$ or infinity.

\item For the partition $\lambda(2)$ we have
\begin{align*}
    f=&-z^{2}(z^{2}-C_{31}^{13}y^{2})-y^{3}+C_{21}^{04}y^{2}z+C_{12}^{04}yz^{2},\\
   g=& yz(z^{2}+C_{31}^{13}y^{2}).
\end{align*}
Then, \[I_{0}(f,g)=I_{0}(f,y)+I_{0}(f,z)+I_{0}(f,z^{2}+C_{31}^{13}y^{2}).\]

Thus, $I_{0}(f,g)=13$ if the resultant $Res(f_{3},z^{2}+C_{31}^{13}y^{2})\neq 0$, that is, these polynomials have not common factors.\\

\item  For the partition $\lambda(3)$ we have the same results for $\lambda(1)$. 

\item For the partition $\lambda(4)$, the coefficient $C_{03}^{40}\neq 0$, this implies that $g_{3}\neq 0$.
 
\item  For the partition $\lambda(5)$ we have that $C_{03}^{22}\neq 0$, then $g_{3}\neq 0$. 

\item For the partition $\lambda(6)$, we have the polynomials 
 
\begin{align*}
f(1)=& -y^{2}z^{2}+C_{30}^{22}y^{3}-y^{2}z\\
g(1)=& y^{3}z.
\end{align*}

Then $I_{0}(f,g)=\infty$.

Moreover, in the case $f(2),g(2)$, we have that $g(2)_{3}\neq 0$. 
\end{itemize}
\end{proof}

\begin{thm}
     Let $\omega_{\lambda(i)}=fdy+gdz$ be a local representation of the $1$-form that appear in the subsections from \ref{43222} to \ref{532111} with $f=f_{4}$ and $g=g_{4}+g_{3}$. For $i=1$ (thus, $i=3$) with polynomials $f(2),g(2)$ and $i=4$, we can obtain conditions on the coefficients such that their intersection index at the origin $I_{0}(f,g)$ is $13$.
\end{thm}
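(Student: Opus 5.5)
The plan is to mirror the previous theorem with the roles of $f$ and $g$ exchanged. For each local representation listed in Subsection \ref{D3M3}, we impose $f_{3}=0$, i.e.\ the vanishing of all degree-$3$ coefficients of $f$. We then check which systems remain consistent with the nondegeneracy hypotheses needed to keep $[0:0:1]$, $[0:1:0]$ out of the singular set (for instance $(C_{03}^{40},C_{13}^{40})\neq(0,0)$ or $C_{03}^{40}\neq 0$). For $\lambda(1)$ with $LT_{1,1}$, $f(1)$ always contains $-y^{3}$, so $f_{3}\neq 0$ and that case is excluded. The same happens for $\lambda(2)$ and for $\lambda(5)$, where $-y^{3}$ appears in $f$. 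For $\lambda(6)$ with $LT_{6,1}$ we would need $C_{30}^{22}=0$, which contradicts $C_{30}^{22}\neq 0$. For $LT_{6,2}$ we would need $C_{30}^{13}=0$, again a contradiction. This leaves exactly $f(2),g(2)$ for $\lambda(1)$ (and hence for $\lambda(3)$), and $\lambda(4)$, as the statement asserts.

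For $\lambda(1)$ with $LT_{1,2}$, setting $f_{3}=0$ forces $C_{30}^{40}=0$, $C_{21}^{40}=-1$ and $C_{03}^{31}=0$. Then
\[f=-z(y^{3}+C_{13}^{40}z^{3}),\qquad g=y^{4}+C_{13}^{40}yz^{3}-y^{2}z+C_{12}^{40}yz^{2}+C_{03}^{40}z^{3}.\]
By additivity of the intersection index, $I_{0}(f,g)=I_{0}(z,g)+I_{0}(y^{3}+C_{13}^{40}z^{3},g)$. We have $I_{0}(z,g)=I_{0}(z,y^{4})=4$. If $C_{13}^{40}\neq 0$, the cubic factors into three distinct lines $y-\zeta z$. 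On each such line $g$ restricts to $z^{3}\cdot(\text{const})+z^{4}\cdot(\text{const})$, so the contribution is $3$ generically, giving $4+9=13$. More precisely, on $y=\zeta z$ the cubic part of $g$ is $(-\zeta^{2}+C_{12}^{40}\zeta+C_{03}^{40})z^{3}$. We therefore need this coefficient to be nonzero for all three cube roots $\zeta$, which amounts to a resultant condition $Res(y^{3}+C_{13}^{40}z^{3},g_{3})\neq 0$. Under that condition $I_{0}(f,g)=13$.

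For $\lambda(4)$, setting $f_{3}=0$ gives $C_{30}^{40}=0$, $C_{12}^{31}=0$ and $C_{12}^{40}=-1$. Then $f=-y^{3}z$ and $g=y^{4}+C_{21}^{40}y^{2}z-yz^{2}+C_{03}^{40}z^{3}$ with $C_{03}^{40}\neq 0$. Thus $I_{0}(f,g)=3I_{0}(y,g)+I_{0}(z,g)=3\cdot 3+4=13$, using $g(0,z)=C_{03}^{40}z^{3}$ and $g(y,0)=y^{4}$. This holds with no further condition beyond those already imposed.

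The main obstacle is the bookkeeping in the first step. We must make sure the Euler-condition coefficient identifications (e.g.\ the entry $-(C_{21}^{40}+1)$ in $B'$ and the corresponding entry in $f$ for $\lambda(4)$) are read off correctly, so that $f_{3}=0$ is genuinely attainable. We must also rule out cases in which the conditions force a non-isolated singularity, where the index would be infinite. For $\lambda(1)$ this means checking compatibility of $C_{03}^{31}=0$ with $(C_{03}^{40},C_{13}^{40})\neq(0,0)$, and choosing $C_{13}^{40}\neq 0$ together with the resultant condition. If $C_{13}^{40}=0$ we instead get $f=-y^{3}z$, and the computation reduces to the $\lambda(4)$ pattern, $3I_{0}(y,g)+4$, which is again $13$ when $C_{03}^{40}\neq 0$.
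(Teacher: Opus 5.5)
Your proposal is correct and follows the same route as the paper. You impose $f_{3}=0$, observe that only $\lambda(1)$ (hence $\lambda(3)$) with $f(2),g(2)$ and $\lambda(4)$ survive, and compute $I_{0}(f,g)$ by additivity: $4+9=13$ under $Res(y^{3}+C_{13}^{40}z^{3},g_{3})\neq 0$, and $3\cdot 3+4=13$ for $\lambda(4)$. Your condition is equivalent to the paper's, which splits the count as $7+6$ using the quadratic factor $g_{3}/z$. Your reduced form $g=y^{4}+C_{13}^{40}yz^{3}-y^{2}z+C_{12}^{40}yz^{2}+C_{03}^{40}z^{3}$ is the right one; the paper's displayed $C_{21}^{40}y^{2}z-yz^{2}$ appears to impose $C_{12}^{40}=-1$ instead of $C_{21}^{40}=-1$.
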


\begin{proof}
    Since for $i=1,\ldots,6$, $f_{3}\neq 0$, except $i=1$ with $(f(2),g(2))$ (and therefore $i=3$ with $(f(2),g(2))$), and $i=4$. Then we have the following cases:
    \begin{itemize}
    \item $\lambda(1)$ (the same for $\lambda(3)$) with
 \begin{align*}
 f(2)=& -z(y^{3}+C_{13}^{40}z^{3}),\\
g(2)=&y(y^{3}+C_{13}^{40}z^{3})+C_{21}^{40}y^{2}z-yz^{2}+C_{03}^{40}z^{3}.\\
\end{align*}
Thus, \begin{align*}
I_{0}(f(2),g(2))=&I_{0}(z,g(2))+I_{0}(y^{3}+C_{13}^{40}z^{3},C_{21}^{40}y^{2}z-yz^{2}+C_{03}^{40}z^{3})\\
=&7+ I_{0}(y^{3}+C_{13}^{40}z^{3},C_{21}^{40}y^{2}-yz+C_{03}^{40}z^{2}).
\end{align*}

We can see that if the resultant $Res(y^{3}+C_{13}^{40}z^{3},C_{21}^{40}y^{2}-yz+C_{03}^{40}z^{2})\neq 0$ then \[I_{0}(y^{3}+C_{13}^{40}z^{3},C_{21}^{40}y^{2}-yz+C_{03}^{40}z^{2})=6.\]

\item $\lambda(4)$. The polynomial $f_{3}=0$ if and only if $C_{30}^{40}=C_{12}^{31}=0$ and $C_{12}^{40}=-1$. This implies that $g=y^{4}+C_{21}^{40}y^{2}z-yz^{2}+C_{03}^{40}z^{3}$ with $C_{03}^{40}\neq 0$. Thus, \[I_{0}(f,g)=3I_{0}(y,g)+I_{0}(z,g)=13.\]
\end{itemize}
\end{proof}

Similarly, let $\omega_{\lambda(i)}=fdy+gdz$ be a local representation of the $1$-form that appear in the subsections  from \ref{43222} to \ref{532111},  with \[f=f_{4}+f_{3} \mbox{ and } g=g_{4}+g_{3}, \mbox{ where  } f_{3}\cdot g_{3}\neq 0.\] 

Since $f_{3}g_{3}\neq 0$, if $Res(f_{3},g_{3})\neq 0$ this implies the intersection index $I_{0}(f,g)=9$. Hence, we do not achieve the required maximal intersection. Thus, we assume that $Res(f_{3},g_{3})=0$, that is, $f_{3}$ and $g_{3}$ have at least one common factor. 

Let $h_{1},m_{1},n_{1},p_{1}$ and $l_{1}$ be homogeneous polynomials of degree $1$ (not necessarily different) on $\C[y,z]$ such that 
\[f_{3}=h_{1}m_{1}n_{1} \mbox{ and } g_{3}=h_{1}l_{1}p_{1}.\]
\begin{itemize}
    \item If $Res(h_{1},*)=0$ for all $*\in\{ m_{1},n_{1},l_{1},p_{1}\}$, that is, $f_{3}=\alpha h_{1}^{3}$ and $g_{3}=\beta h_{1}^{3}$, with $\alpha, \beta\in\C^{*}$.

    Let $H=\beta f- \alpha g= \beta f_{4}-\alpha g_{4}=\prod\limits_{i=1}^{4} L(i)$, where $L(i)$ is a linear polynomial in $\C[y,z]$.

    Observe that, for $i=1,\ldots,4$, 
\[ I_{0}(L(i),g)=\left\{ \begin{array}{lcc}
              3 &   if  & L(i)\neq h_{1}, \\
              4  &  if  &L(i)\equiv h_{1} \mbox{ and } L(i)\nmid g_{4}.\\
             \end{array}
   \right.
 \] 
 Thus, $12\leq \sum\limits_{i=1}^{4}I_{0}(L(i),g)=I_{0}(f,g)\leq 16$. Since we want that $I_{0}(f,g)=13$, then $H$  should be written  as $L(1)L(2)L(3)h_{1}$, with $L(i)\neq h_{1}$.\\  
 \begin{itemize}
     \item $\left( \lambda(1),LT_{1,1}\right)$. Since the coefficients $C(y^{3},f(1))=-1$ and $C(z^{3},g(1))=C_{03}^{31}\neq 0$. We can assume that $h_{1}=ay+bz$ where $ab\neq 0$. Moreover, for the equation $h_{1}^{3}|f(1)_{3},g(1)_{3}$ then $b=-3b$, thus, $b=0$ which is a contradiction.\\
     \item $\left( \lambda(1), LT_{1,2}\right)$. Since $z|f(2)_{3}$, then $h_{1}^{3}=C_{03}^{31}z^{3}$, this implies $C_{30}^{40}=0$ and $C_{21}^{40}=-1$. Moreover, the coefficient $C(y^{2}z,g(2)_{3})=C_{21}^{40}=-1$, thus $h_{1}\nmid g(2)_{3}$, which is a contradiction.\\
     \item $\left( \lambda(2), LT_{2}\right)$. In this case, $y|g_{3}$ and the coefficient $C(y^{3},f_{3})=-1$, then $h_{1}^{3}=-y^{3}$ and $\beta=-C_{30}^{13}$. 
     Thus $H=-C_{30}^{13}f_{4}-g_{4}=C_{30}^{13}z^{4}+C_{30}^{13}C_{31}^{13}y^{2}z^{2}-yz^{3}-C_{31}^{13}y^{3}z$, thus $h_{1}\nmid H$, which is a contradiction.\\
     \item $\left( \lambda(4),LT_{4}\right)$. Since $z\mid f_{3}$, then $h_{1}=z$. This implies that $f_{3}=-(C_{12}^{40}+1)z^{3}$ (that is, $C_{30}^{40}=C_{12}^{31}=0$). Moreover, since $g_{3}=C_{03}^{40}z^{3}$, thus $C_{12}^{40}=0$. This condition modifies $f_{3}$ as $f_{3}=-z^{3}$. Then $H=-C_{03}^{40}y^{3}z+y^{4}=y^{3}(-C_{03}^{40}z+y)$, thus $z\nmid H$, that is, $I_{0}(g,f)<13$. \\
     \item $\left(\lambda(5),LT_{5}\right)$. Since $C(y^{3},f_{3})=-1$ and $C(z^{3},g_{3})=C_{03}^{22}\neq 0$, we can assume that $h_{1}=ay+bz$ with $ab\neq 0$. By the equations $f_{3}=g_{3}$, we have that $a=0$ or $b=0$, which is a contradiction. Thus, $I_{0}(f,g)<13$.\\
     \item $(\lambda(6), LT_{6,1})$. Since $C(y^{3},f(1))=C_{30}^{22}\neq 0$, assume that $h_{1}=ay+bz$ with $a\neq 0$. By substituting $h_{1}$ into the polynomials $g(1)_{3}$ and $f(1)_{3}$, we obtain that $a=0$, which is a contradiction.\\
     \item $(\lambda(6),LT_{6,2})$. Since $C(y^{3},f(2)_{3})=C_{30}^{13}\neq 0$ and $C(z^{3},g(2)_{3})=C_{03}^{22}\neq 0$, then we assume that $h_{1}=ay+bz$ with $ab\neq 0$. In this case we have $-b=a$ and $-b=3a$, this implies that $a=0$, which is a contradiction.\\
 \end{itemize}
\item If $Res(h_{1},*)\neq 0$ for some $*\in \{m_{1},n_{1},l_{1},p_{1}\}$.

Let $H=-l_{1}p_{1}f+m_{1}n_{1}g=-l_{1}p_{1}f_{4}+m_{1}n_{1}g_{4}$. Since $H$ is a homogeneous polynomial of degree $6$ in $\C[y,z]$ then it is a product of $6$ lines $L(i)$, thus 
\[\sum_{i=1}^{6}I_{0}(L_(i),g)=I_{0}(H,g)=I_{0}(l_{1},g_{4})+I_{0}(p_{1},g_{4})+I_{0}(f,g).\]

If $l_{1}$ and $p_{1}$ do not divide $g_{4}$, then $I_{0}(l_{1},g_{4})=I_{0}(p_{1},g_{4})=4$, on the other hand, these intersection index are infinite.

Moreover, for all $i=1,\ldots, 6$ the intersection index 
\[ I_{0}(L(i),g)=\left\{ \begin{array}{lcc}
              3 &   if  & L(i)\nmid g_{3}, \\
              4  &  if  &L(i)\mid g_{3} \mbox{ and } L(i)\nmid g_{4}.\\
             \end{array}
   \right.
 \] 

 If we consider that $I_{0}(f,g)=13$, then $I_{0}(H,g)=21$. This hold if for $i=1,2,3$ we have  $L(i)\mid g_{3}$ but $L(i)\nmid g_{4}$ and for $j=4,5,6$ $L(j)\nmid g_{3}$.\\

 Consider the polynomial $H_{3}=l_{1}p_{1}z+m_{1}n_{1}y$. Then, we have the following table:
 \begin{center}
  \begin{tabular}{|c|c|c|c|c|}\hline
       $\lambda(i), LT_{i}$  & $H$  & Conditions of $H$ & $g_{4}$ & Conditions of $g_{4}$ 
       \\\hline
       $\lambda(1), LT_{1,1}$&$ z(y^{2}+C_{13}^{31}z^{2})H_{3} $& $(C_{03}^{31},C_{13}^{31})\neq (0,0)$ & $yz(y+bz)(y-bz)$ & $b^{2}=C_{13}^{31}$ 
       \\\hline
       $\lambda(1),LT_{1,2}$& $(y^{3}+C_{13}^{40}z^{3})H_{3}$& $(C_{03}^{40},C_{13}^{40})\neq (0,0)$& $y(y^{3}+C_{13}^{40}z^{3})$& --
       \\ \hline
       $\lambda(2),LT_{2}$& $z(z^{2}+C_{31}^{13}y^{2})H_{3}$& -- &$yz(z+by)(z-by)$& $b^{2}=C_{31}^{13}$\\
       \hline
       $\lambda(4),LT_{4}$ & $y^{3}H_{3}$ & $C_{03}^{40}\neq 0$ & $y^{4}$ & --\\
       \hline
       $\lambda(5),LT_{5}$ & $yz^{2}H_{3}$ & $C_{03}^{22}\neq 0$ & $y^{2}z^{2}$ & --\\
       \hline
       $\lambda(6),LT_{6,1}$& $y^{2}zH_{3}$ & $C_{30}^{22}\neq 0$ & $y^{3}z$ & --\\
       \hline
       $\lambda(6),LT_{6,2}$& $yz^{2}H_{3}$ & $C_{30}^{13}C_{03}^{22}\neq 0$ & $y^{2}z^{2}$ & --\\
       \hline
     \end{tabular}
\end{center}

 In each case, we must have that $H_{3}=L(1)L(2)L(3)$ with $L(i)\mid g_{3}$ for $i=1,2,3$.\\
 
 Since, $g_{3}=h_{1}l_{1}p_{1}$, then $L(i)$ is a  scalar multiple of $h_{1},l_{1}$ or $p_{1}$. Without loss of generality, let us consider two cases:
 \begin{itemize}
     \item[$\star$] Case $H_{3}=l_{1}L(2)L(3)$. Then, $l_{1}(L(2)L(3)-p_{1}z)=m_{1}n_{1}y$. This implies that $y\mid l_{1}$ or $y\mid L(2)L(3)-p_{1}z$. Since $y\nmid g_{4}$, then $y\nmid l_{1}$, thus, $y\mid L(2)L(3)-p_{1}z$, that is, $C(z^{3},L(2)L(3)-p_{1}z)=0$. Let $M\in\C[y,z]$ such that $My=L(2)L(3)-p_{1}z$. 
     
     Moreover, since $m_{1}$ and $n_{1}$ are also factors of $l_{1}(L(2)L(3)-p_{1}z)$, we can have the following sub-cases:
     \begin{itemize}
         \item If $n_{1},m_{1}\mid l_{1}$, then $m_{1}\equiv n_{1}\equiv l_{1}$, thus, $f_{3}=h_{1}l_{1}^{2}$, and $g_{3}=h_{1}l_{1}p_{1}$ with the property $C(z^{3},L(2)L(3)-p_{1}z)=0$. 
         \item If $m_{1}\mid l_{1}$ and $n_{1}\mid My$. Thus, with the property $C(z^{3},L(2)L(3)-p_{1}z)=0$, we have:
         \begin{align*}
             f_{3}=h_{1}l_{1}y,~ &~ g_{3}=h_{1}l_{1}p_{1} \mbox{ or }\\
             f_{3}=h_{1}l_{1}n_{1},~ & ~g_{3}=h_{1}l_{1}p_{1} \mbox{ and } y\nmid n_{1}.
         \end{align*}
         \item If $m_{1},n_{1}\mid My$. We have $y\nmid m_{1}n_{1}$, thus, 
         \[f_{3}=h_{1}My,~ g_{3}=h_{1}l_{1}p_{1} \mbox{ and } Myl_{1}=H_{3}.\]
     \end{itemize}
     
     \item[$\star$] Case $H_{3}\equiv h_{1}^{3}$. Without loss of generality, we can assume that
      \[h_{1}=y+b_{1}z,~ m_{1}=y+b_{2}z, ~n_{1}=a_{3}y+z, l_{1}=y+b_{4}z,\mbox{ and } p_{1}=a_{5}y+z.\]
      Since $H_{3}=l_{1}p_{1}z+m_{1}n_{1}y=h_{1}^{3}$, then 
      \[a_{3}=1,~ 3b_{1}=1+b_{2}+a_{5},~3b_{1}^{2}=1+b_{4}a_{5}+b_{2},~\mbox{ and } b_{1}^{3}=b_{4}.\] This implies \[(1-b_{1})\left[(1+b_{2}-a_{5}b_{1}(b_{1}+1))\right]=0,\]  thus, $b_{1}=1$ or $1+b_{2}=a_{5}b_{1}(b_{1}+1)$.
\begin{itemize}
    \item If $b_{1}=1$, 
    \begin{align*}
        h_{1}&=y+z,\\
        m_{1}&=y+b_{2}z,\\
        n_{1}&=y+z,\\
        l_{1}&=y+z,\\
        p_{1}&=(2-b_{2})y+z.
    \end{align*}
    In other words, $f_{3}=h_{1}^{2}m_{1}$ and $g_{3}=h_{1}^{2}p_{1}$. 
    \item If $1+b_{2}=a_{5}b_{1}(b_{1}+1)$. We have 
    \begin{align*}
        h_{1}&=y+b_{1}z,\\
        m_{1}&=y+b_{2}z,\\
        n_{1}&=y+z,\\
        l_{1}&=y+b_{1}^{3}z,\\
        p_{1}&=a_{5}y+z.
    \end{align*}
\end{itemize}
\end{itemize}
 \end{itemize}
Let us look at some examples from the previous analysis; we will use the case 
 when $H_{3}=h_{1}^{3}$ and $b_{1}=1$. Thus, for some $a,b\in \C^{*}$,
 \begin{align}
     f_{3}&=a(y+z)^{2}(y+b_{2}z)=a(y^{3}+(2+b_{2})y^{2}z+(1+2b_{2})yz^{2}+b_{2}z^{3}),\\
     g_{3}&=b(y+z)^{2}\left((2-b_{2})y+z\right)=b((2-b_{2})y^{3}+(5-2b_{2})y^{2}z+(4-b_{2})yz^{2}+z^{3}).
 \end{align}
We apply the equations $(6.1)$ and $(6.2)$ in each case $\lambda(i)$ with $i=1,..,6$. 
\begin{itemize}
    \item $(\lambda(1),LT_{1,1})$. For $a=-1$, we have 
    \begin{align*}
        2+b_{2}=&C_{30}^{31}, ~1+2b_{2}=C_{21}^{31}, ~b_{2}=-C_{03}^{22}, \mbox{ and }\\
        b(2-b_{2})=&C_{30}^{31}, ~b(5-2b_{2})=C_{21}^{31},~b(4-b_{2})=C_{12}^{31}, ~b=C_{03}^{31}.
    \end{align*}
    Thus, $b=-3$ and $b_{2}=4$. Moreover,
    \begin{align*}
        f(1)=&-(y+z)^{2}(y+4z)-z^{2}(y^{2}+C_{13}^{31}z^{2})\\
        g(1)=&-3(y+z)^{2}(-2y+z)+3yz(y^{2}+C_{13}^{31}z^{2})
    \end{align*} with $(y+4z),(y+z),(-2y+z) \nmid y^{2}+C_{13}^{31}z^{2}$.
  In this case is enough to take \[H=-3f(-2y+z)+g(y+4z)=3z(y+z)^{2}(y^{2}+C_{13}^{31}z^{2}).\]
  Thus, \begin{align*}
      I_{0}(f,g)&=I_{0}(H,g)-I_{0}(-2y+z,g_{4})\\
                &=I_{0}(z,g)+2I_{0}(y+z,g_{4})+I_{0}(y^{2}+C_{13}^{31}z^{2}, g)-4\\
                &=3+8+6-4=13.
  \end{align*}

    \item $(\lambda(1),LT_{1,2})$. Since $f_{3}=a(y+z)^{2}(y+b_{2}z)$ this implies that the coefficient $C(y^{3},f_{3})\neq 0$, which is a contradiction.
    \item $(\lambda(2),LT_{2})$. Since $g_{3}=b(y+z)^{2}((2-b_{2})y+z)$, thus the coefficient $C(z^{3},g_{3})\neq 0$, which is a contradiction.
    \item $(\lambda(4),LT_{4})$. Since $f_{3}=a(y+z)^{2}(y+b_{2}z)$, then we should have the coefficient $C(y^{3},f_{3})\neq 0$, but it is a contradiction.
\item $(\lambda(5),LT_{5})$. We can assume that $a=-1$, then $b=-1/3$ and $b_{2}=-2$. Thus, \begin{align*}
    f=&-(y+z)^{2}(y-2z)-yz^{3}\\
    g=&-(1/3)(y+z)^{2}(4y+z)+ (1/3)y^{2}z^{2}.
\end{align*}
In this case, it is enough to take $H=-(1/3)(4y+z)f+(y-2z)g=(1/3)yz^{2}(y+z)^{2}$. Thus, \begin{align*}
    I_{0}(f,g)&= I_{0}(H,g)-2I_{0}(4y+z,y)-2I_{0}(4y+z,z)\\
    &=17-4=13
\end{align*}

\item $(\lambda(6),LT_{6,1})$. If $f(1)_{3}$ and $g(1)_{3}$ satisfy the equations $(6.1)$ and $(6.2)$, respectively, then by the equality of these equations we have that $1=0$, a contradiction.  
\item $(\lambda(6),LT_{6,2})$. If we assume that $a=1$, then we have the system \[4b+6=2b_{2}(b-1)=5b+1=8b,\] which has no solution. 
\end{itemize}
 
The interested reader may consider the case $1+b_{2}=a_{5}b_{1}(b_{1}+1)$. We omit the computations for foliations of multiplicity 1 and 2, as they are considerably more tedious.



\section{Conclusions}\label{Conclusion}
The computations performed on the elements of the cover $U_{\lambda}$, where $\lambda\in \lambda(d,N)$ are rather tedious due to the large number of coefficients that must be analyzed.
Nevertheless, these results represent an initial and modest attempt to study the relationship between the foliations with isolated singularities on the complex projective plane and the elements of the Hilbert Scheme of points.

There are several open questions concerning this relationship. Since the computations were analyzed in the local form, one of the questions to be addressed later is related to the process of ‘lifting’ the elements to the Hilbert scheme of points on the projective space. For now, and for our purposes, one of the first questions is to determine the number or the partitions whose ideals have the Hilbert function associated with the Hilbert function of a foliation. Thus, based on a computation carried out using the software Macaulay2, we propose the following conjecture.

\begin{conj}
    Let $d\geq 2$ and $N=d^{2}+d+1$, then  $|\lambda(d,N)|=2^{2}3^{d-2}$
\end{conj}

The following table presents several values of $|\lambda(d,d^{2}+d+1)|$ corresponding to degree $2$,$3$,$4$,$5$, and $6$.

\begin{center}
  \begin{tabular}{|c|c|c|c|c|c|}\hline
       $d$& $2$  & $3$ & $4$ & $5$ & $6$ 
       \\\hline
       $|\lambda(d,d^{2}+d+1)|$&$ 4 $& $12$ & $36$ & $108$ & $ 324$
       \\\hline
     \end{tabular}
\end{center}

Observe that $|\lambda(d,N)|=2^{2}3^{d-2}$ is equivalent to  \[3|\lambda(d,d^{2}+d+1)|=|\lambda(d+1,d^{2}+3d+3)|.\]

 \bibliographystyle{alpha}
\bibliography{Bibliografia}

\end{document}